\numberwithin{equation}{section}
\newtheorem{theorem}{Theorem}[section]
\newtheorem{lemma}[theorem]{Lemma}
\newtheorem{proposition}[theorem]{Proposition}
\theoremstyle{definition}
\newtheorem{definition}[theorem]{Definition}
\newtheorem{example}[theorem]{Example}
\newtheorem{remark}[theorem]{Remark}
\newcommand{\eps}{{\varepsilon}}
\renewcommand{\L}{{\mathcal L}}
\newcommand{\D}{{\mathcal D}}
\renewcommand{\P}{{\mathcal P}}
  \newcommand{\defect}{{\mathrm{def}}}
 \newcommand{\wls}{{\text{wls}}}
 \newcommand{\nr}{{\text{nr}}}
 \newcommand{\wt}{{\text{wt}}}
 \newcommand{\len}{{\text{len}}}
\newcommand{\CC}{{\mathbb C}}
\newcommand{\RR}{{\mathbb R}}
\newcommand{\ZZ}{{\mathbb Z}}
\newcommand{\PP}{{\mathbb P}}
\renewcommand{\o}{\multiput(0,0)(10,0){1}{\circle{2}}}
\newcommand{\oo}{\multiput(0,0)(10,0){2}{\circle{2}}}
\newcommand{\ooo}{\multiput(0,0)(10,0){3}{\circle{2}}}
\newcommand{\oooo}{\multiput(0,0)(10,0){4}{\circle{2}}}
\newcommand{\Eeee}{\put(1,0){\line(1,0){8}}}
\newcommand{\eEee}{\put(11,0){\line(1,0){8}}}
\newcommand{\eOee}{\qbezier(10.8,0.6)(15,4)(19.2,0.6)
\qbezier(10.8,-0.6)(15,-4)(19.2,-0.6)}
\newcommand{\eeOe}{\qbezier(20.8,0.6)(25,4)(29.2,0.6)
\qbezier(20.8,-0.6)(25,-4)(29.2,-0.6)}
  \title{Relative Node Polynomials for Plane Curves} \author{Florian Block}
\date{\today}
\address{Mathematics Institute, University of Warwick, Coventry, CV4
  7AL, United Kingdom}
  \email{f.s.block@warwick.ac.uk}
\thanks {\emph {2010 Mathematics Subject Classification:} Primary:
    14N10. Secondary: 14T05, 14N35, 05A99.}
\keywords {Enumerative geometry, floor diagram, Gromov-Witten theory,
  node polynomial, tangency conditions}
\thanks{The author was partially supported by a Rackham One-Term
  Dissertation Fellowship \&  by the NSF grant DMS-055588.}
\begin{document}

\begin{abstract}
We generalize the recent work of S.~Fomin and G.~Mikhalkin on polynomial formulas for Severi degrees.

The degree of the Severi variety of plane curves of degree $d$ and $\delta$ nodes is given by a polynomial in $d$, provided $\delta$ is fixed and $d$ is large enough. We extend this result to generalized Severi varieties parametrizing plane curves which, in addition, satisfy tangency conditions of given orders with respect to a given line. We show that the degrees of these varieties, appropriately rescaled, are given by a combinatorially defined ``relative node polynomial'' in the tangency orders, provided the latter are large enough. We describe a method to compute these polynomials for arbitrary $\delta$, and use it to present explicit formulas for $\delta \le 6$. We also give a threshold for polynomiality, and compute the first few leading terms for any~$\delta$.
\end{abstract}

\maketitle

\section{Introduction and Main Results}
\label{sec:intro}

The \emph{Severi degree} $N^{d, \delta}$ is the degree of
the Severi variety of (possibly reducible)
nodal plane curves of degree $d$ with $\delta$ nodes. Equivalently, $N^{d, \delta}$ is the number of such curves 
passing through
$\tfrac{(d+3)d}{2} - \delta$ generic points in the complex projective
plane~$\CC \PP^2$. Severi varieties have 
received considerable attention since they were introduced by F.~Enriques~\cite{En12} and F.~Severi~\cite{Se21} around 1915. Much later, in 1986, J.~Harris~\cite{Ha86} achieved a celebrated breakthrough by 
showing their irreducibility.

In 1994, P.~Di~Francesco and C.~Itzykson \cite{DI} conjectured that
the numbers $N^{d, \delta}$ are given by a polynomial in $d$, for a fixed number of nodes
$\delta$, provided $d$ is large enough.
S.~Fomin and
G.~Mikhalkin \cite[Theorem 5.1]{FM} established this
polynomiality in 2009. More precisely, they showed that there exists, for
every $\delta \ge 1$, a \emph{node polynomial} $N_\delta(d)$ which
satisfies $N^{d, \delta} = N_\delta(d)$, for all $d \ge 2 \delta$.

The polynomiality of $N^{d, \delta}$ and the polynomials $N_\delta(d)$
were known in the 19th century for $\delta = 1, 2$ and $3$. For $\delta =
4, 5$ and $6$, this was only achieved by
I.~Vainsencher~\cite{Va} in 1995. In 2001, S.~Kleiman and R.~Piene
\cite{KP} settled the cases $\delta = 7$ and $8$. In \cite{FB}, the author  computed $N_\delta(d)$
for $\delta \le 14$ and improved the threshold of S.~Fomin and G.~Mikhalkin
by showing that $N^{d, \delta} = N_\delta(d)$ provided $d \ge \delta$.

Severi degrees can be generalized to incorporate tangency
conditions to a fixed line $L \subset
\CC \PP^2$. More specifically, the \emph{relative Severi degree} $N^{\delta}_{\alpha, \beta}$ is the number of (possibly
reducible) nodal plane curves with $\delta$ nodes that have
tangency of order~$i$ to $L$ at $\alpha_i$  \emph{fixed} points (chosen in advance) and
tangency of order $i$ to $L$ at $\beta_i$ \emph{unconstrained} points, for all $i \ge 1$, and that pass through an
appropriate number of generic points. Equivalently, $N^{\delta}_{\alpha, \beta}$ is the degree of the
\emph{generalized Severi variety} studied in~\cite{CH98,Va00}. By
B\'ezout's Theorem, the degree of a curve with tangencies of order $(\alpha, \beta)$ equals $d = \sum_{i \ge 1} i(\alpha_i + \beta_i)$. The number of
point conditions (for a potentially finite count) is
$\frac{(d+3)d}{2}- \delta - \alpha_1 - \alpha_2 - \cdots$. 
We recover non-relative Severi degrees by specializing to $\alpha =
(0, 0, \dots)$ and $\beta = (d, 0, 0, \dots)$. The numbers $N^{\delta}_{\alpha, \beta}$ are determined
by the rather complicated Caporaso--Harris recursion~\cite{CH98}.

In this paper, we show that much of the story of (non-relative) node
polynomials carries over to relative Severi
degrees. Our main result is that, up to a simple combinatorial
factor and for fixed $\delta \ge 1$, the relative
Severi degrees $N^\delta_{\alpha, \beta}$ are given by a multivariate \emph{polynomial} in
$\alpha_1, \alpha_2, \dots, \beta_1, \beta_2, \dots$, provided that $\beta_1+ \beta_2+ \dots$ is
sufficiently large. 
For a sequence $\alpha = (\alpha_1, \alpha_2,
\dots)$ of non-negative integers with only finitely many $\alpha_i$
non-zero, we write
\begin{displaymath}
|\alpha|  \stackrel{\text{def}}{=} \alpha_1 + \alpha_2 + \cdots,
\quad \quad \alpha ! \stackrel{\text{def}}{=} \alpha_1 ! \cdot \alpha_2 ! \cdot \cdots.
\end{displaymath}
Throughout the paper, we use the grading
$\deg(\alpha_i) = \deg(\beta_i) = 1$ (so that $d$ and $|\beta|$ are homogeneous of
degree $1$). The following is our main result.

\begin{theorem}
\label{thm:relativenodepoly}
For every $\delta \ge 1$, there is a combinatorially defined
polynomial $N_{\delta}(\alpha; \beta)$ in $\alpha_1,
\alpha_2, \dots, \beta_1, \beta_2, \dots$  of (total) degree
$3\delta $ such that, for all $\alpha_1,
\alpha_2, \dots, \beta_1, \beta_2, \dots$ with $|\beta| \ge \delta $, the relative Severi
degree $N^\delta_{\alpha, \beta}$ is given by
\begin{equation}
\label{eqn:relativenodepoly}
N^\delta_{\alpha, \beta} =1^{\beta_1} 2^{\beta_2} \cdots \frac{(|\beta| - \delta)!}{\beta!}
\cdot N_{\delta}(\alpha_1,
\alpha_2, \dots; \beta_1, \beta_2, \dots).
\end{equation}
\end{theorem}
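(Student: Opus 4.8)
The plan is to adapt the floor-diagram approach of S.~Fomin and G.~Mikhalkin \cite{FM}, with the sharpenings of \cite{FB}, to the relative setting. First, one records a combinatorial formula for $N^\delta_{\alpha,\beta}$ in terms of \emph{relative floor diagrams}: connected, acyclic, edge-weighted digraphs $\D$ on a linearly ordered set of $|\beta|$ floors, with $\beta_i$ floors of weight $i$ for each $i$ (one per unconstrained tangency of order $i$), decorated for every $i\ge 1$ with $\alpha_i$ additional ``fixed'' left ends of weight $i$, and of prescribed cogenus $\delta$. The formula should read $N^\delta_{\alpha,\beta}=\sum_{\D}\mu(\D)\,\nu(\D)$, where $\D$ ranges over relative floor diagrams of type $(\alpha,\beta)$ and cogenus $\delta$, the multiplicity $\mu(\D)$ is the product of the squares of the bounded-edge weights times $\prod_i i^{\beta_i}$ (the Caporaso--Harris weight carried by the free tangencies), and $\nu(\D)$ is the number of \emph{markings} of $\D$, i.e.\ order-preserving placements of the point conditions. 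This follows from the floor-decomposition/tropical correspondence for relative invariants together with the fact that the right-hand side obeys the Caporaso--Harris recursion \cite{CH98}, or --- as in \cite{FM} --- by verifying that recursion directly. This already furnishes the \emph{combinatorial} definition of $N_\delta(\alpha;\beta)$ demanded by the theorem.

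Next, following \cite{FM,FB}, one cuts each $\D$ into \emph{templates}: the finitely many (for $\delta$ fixed) local configurations where $\D$ deviates from a plain chain of weight-one edges, strung along a long chain of floors into which the templates and the $|\alpha|$ fixed left ends must be inserted subject to the order constraints. Counting markings then becomes a bookkeeping problem for these insertions. The weight-one floors and fixed left ends lying in no template are mutually interchangeable; ordering them along the chain and absorbing the $\prod_i i^{\beta_i}$ already present in $\mu(\D)$ produces precisely the scalar $1^{\beta_1}2^{\beta_2}\cdots\,(|\beta|-\delta)!/\beta!$. Factoring it out of $N^\delta_{\alpha,\beta}$ exhibits $N_\delta(\alpha;\beta)$ as a finite sum, over cogenus-$\delta$ template profiles, of counts of lattice points in polytopes whose facets move affinely with $\alpha_1,\alpha_2,\dots$ and $\beta_1,\beta_2,\dots$.

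Finally, each of these finitely many constrained counts is a polynomial in $\alpha_1,\alpha_2,\dots,\beta_1,\beta_2,\dots$ as soon as the floor chain is long enough to accommodate every template profile without overlap; this is the Ehrhart-type (``counting with gaps'') argument of \cite[\S5]{FM}, as sharpened in \cite{FB}, and tracking the lengths of cogenus-$\delta$ templates shows it applies once $|\beta|\ge\delta$. Summing over profiles gives the polynomial $N_\delta(\alpha;\beta)$ and the identity \eqref{eqn:relativenodepoly} in that range. The total degree $3\delta$ is read off from the maximal contribution of a cogenus-$\delta$ configuration to the number of markable positions. As a consistency check, specializing to $\alpha=(0,0,\dots)$ and $\beta=(d,0,0,\dots)$ turns the prefactor into $(d-\delta)!/d!$, so \eqref{eqn:relativenodepoly} becomes $N_\delta(0;(d,0,\dots))=d(d-1)\cdots(d-\delta+1)\cdot N_\delta(d)$, of degree $\delta+2\delta=3\delta$ since $\deg N_\delta(d)=2\delta$ \cite{FM,FB}, and the threshold $|\beta|\ge\delta$ becomes the threshold $d\ge\delta$ of \cite{FB}.

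The polynomiality engine of the last step is essentially a transcription of the non-relative case; the real work lies in the first two steps. I expect the main obstacle to be pinning down the \emph{exact} prefactor $1^{\beta_1}2^{\beta_2}\cdots(|\beta|-\delta)!/\beta!$ --- so that the quotient is an honest polynomial in the $\alpha_i$ and $\beta_i$ rather than merely a quasi-polynomial --- which requires a careful analysis of how the multiset of left-end weights interleaves with the templates along the chain of floors, and then showing, uniformly over all cogenus-$\delta$ template profiles, that no threshold stronger than $|\beta|\ge\delta$ is needed for the stable range.
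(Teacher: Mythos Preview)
Your overall strategy---floor diagrams, template decomposition, polynomiality of the resulting sums---is the paper's, but two substantive pieces are missing or misdescribed.

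First, your floor-diagram model is off. In the setup of \cite{FM} used here, a floor diagram for data $(\alpha,\beta)$ has $d=\sum_i i(\alpha_i+\beta_i)$ vertices, not $|\beta|$, and the vertices carry no weights; the diagrams need not be connected. The tangency data enters only through the \emph{marking}: one chooses compatible sequences $(\{\alpha^i\},\{\beta^i\})$ at each vertex and attaches $\alpha$- and $\beta$-edges accordingly (Definition~\ref{def:relativemarking}). Your ``$|\beta|$ weighted floors with fixed left ends'' picture does not match this and would need an independent justification you do not supply.

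Second---and this is the real gap---the ordinary template decomposition of \cite{FM} is not enough in the relative setting. The new complication is that $\alpha$- and $\beta$-edges can attach at vertices \emph{other than the last one}, and each such attachment contributes to the cogenus. The paper's key new gadget is the \emph{extended template} $(\Lambda,A,B)$: a graph $\Lambda$ sitting at the right end of $\D$ together with matrices $A,B$ recording which $\alpha^i,\beta^i$ are supported away from the last vertex. The decomposition (Proposition~\ref{prop:mainformula}) is then over ordinary templates $\Gamma_1,\dots,\Gamma_m$ \emph{plus} an extended template, with $\sum_s\delta(\Gamma_s)+\delta(\Lambda)+\delta(A)+\delta(B)=\delta$. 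The prefactor $(|\beta|-\delta)!/\beta!$ emerges from counting linear extensions of a carefully constructed poset $\P(\Lambda,A,B)$ (Lemma~\ref{lem:polyoffinaltemplate}), not simply from reshuffling generic floors. Without the matrices $A,B$ you cannot see the cogenus carried by the tangency placements, and your step~2 does not go through as stated.

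Finally, the threshold $|\beta|\ge\delta$ needs more than ``tracking lengths.'' The paper first proves polynomiality for $d\ge\delta+1$ via the technical Lemmas~\ref{lem:smallevaluations} and~\ref{lem:dmininequality} (which bound $d_{\min}(\Lambda,A,B)-s(\Lambda,A,B)$); this covers $|\beta|\ge\delta$ whenever some $\alpha_i$ or some $\beta_j$ with $j\ge2$ is nonzero. The remaining boundary case $\alpha=0$, $\beta=(\delta,0,\dots)$ is handled separately by invoking the non-relative threshold $d\ge\delta$ of \cite{FB}.
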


We call $N_{\delta}(\alpha; \beta)$ the \emph{relative node
  polynomial} and use the same notation as in the non-relative case if
no confusion can occur. 
We do not need to specify the number
of variables in light of the following stability result.

\begin{theorem}
\label{thm:stability}
For $\delta \ge 1$ and vectors $\alpha = (\alpha_1, \dots, \alpha_{m})$,
$\beta = (\beta_1, \dots, \beta_{m'})$ with $|\beta| \ge \delta $, the
following polynomial identities hold:
\begin{displaymath}
N_\delta(\alpha, 0; \beta) = N_\delta(\alpha; \beta) \quad \text{ and
} \quad
N_\delta(\alpha; \beta, 0) = N_\delta(\alpha; \beta).
\end{displaymath}
Therefore, there exists a formal power series
$N^{\infty}_\delta(\alpha; \beta)$ in infinitely many variables
$\alpha_1, \alpha_2, \dots,$ $\beta_1, \beta_2, \dots$ that
specializes to all relative node polynomials under $\alpha_{m +1} =
\alpha_{m+2} = \cdots = 0$ and $\beta_{m' +1} =
\beta_{m'+2} = \cdots = 0$, for various $m, m' \ge 1$.
\end{theorem}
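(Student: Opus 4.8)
The plan is to deduce both identities from the characterization of $N_\delta$ supplied by Theorem~\ref{thm:relativenodepoly}. Read in a fixed number of variables, that theorem says: for each $m,m'$ (large enough for the combinatorial definition to make sense) there is a polynomial $N_\delta(\alpha_1,\dots,\alpha_m;\beta_1,\dots,\beta_{m'})$ which, at every lattice point $(\alpha;\beta)\in\ZZ_{\ge0}^{m}\times\ZZ_{\ge0}^{m'}$ with $|\beta|\ge\delta$, takes the value $N^\delta_{\alpha,\beta}\big/\big(1^{\beta_1}2^{\beta_2}\cdots(|\beta|-\delta)!/\beta!\big)$. Such a polynomial is \emph{unique}, because this set of lattice points is Zariski dense in $\CC^{m+m'}$: a polynomial vanishing on it vanishes, for each fixed $\beta^{(0)}$ with $|\beta^{(0)}|\ge\delta$, on all of $\ZZ_{\ge0}^{m}\times\{\beta^{(0)}\}$, hence all its $\alpha$-coefficients vanish on $\{\beta_1\ge\delta\}\times\ZZ_{\ge0}^{m'-1}$, which is a product of infinite sets, so it is the zero polynomial. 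Consequently, to prove $N_\delta(\alpha,0;\beta)=N_\delta(\alpha;\beta)$ it suffices to check that the left-hand side — the $(m{+}1,m')$-variable relative node polynomial specialized at $\alpha_{m+1}=0$ — satisfies the defining property of the $(m,m')$-variable one, and likewise for $\beta$.

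For this I would invoke the elementary fact that the relative Severi degree is insensitive to appending trailing zeros: imposing ``tangency of order $m{+}1$ to $L$ at $\alpha_{m+1}=0$ fixed points'' (resp. ``order $m'{+}1$ at $\beta_{m'+1}=0$ unconstrained points'') is a vacuous condition, so $N^\delta_{(\alpha,0),\beta}=N^\delta_{\alpha,\beta}$ and $N^\delta_{\alpha,(\beta,0)}=N^\delta_{\alpha,\beta}$; moreover appending zeros does not change $|\beta|$, so the admissible range $|\beta|\ge\delta$ is preserved. The combinatorial prefactor behaves the same way: it does not involve $\alpha$ at all, and appending $\beta_{m'+1}=0$ leaves $1^{\beta_1}2^{\beta_2}\cdots(|\beta|-\delta)!/\beta!$ unchanged, since $(m'{+}1)^{0}=1$, $0!=1$ and $|(\beta,0)|=|\beta|$. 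Hence at every lattice point with $|\beta|\ge\delta$ the specialized polynomial $N_\delta(\alpha,0;\beta)$ (resp. $N_\delta(\alpha;\beta,0)$) takes exactly the value that defines $N_\delta(\alpha;\beta)$, and by the uniqueness above the two polynomials coincide identically.

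The existence of $N^\infty_\delta$ then follows formally. The two stability identities say that the relative node polynomials form a compatible system under the ring maps that set the last $\alpha$- or $\beta$-variable to $0$; therefore the coefficient of a monomial $\alpha^I\beta^J$ in $N_\delta(\alpha_1,\dots,\alpha_m;\beta_1,\dots,\beta_{m'})$ is independent of $m,m'$ as soon as these exceed the indices of the variables actually occurring in $\alpha^I\beta^J$. Collecting these stabilized coefficients defines a formal power series $N^\infty_\delta(\alpha;\beta)\in\CC[[\alpha_1,\alpha_2,\dots,\beta_1,\beta_2,\dots]]$ — not a polynomial, since arbitrarily many variables may occur, although every monomial still has degree at most $3\delta$ — and by construction it specializes to each $N_\delta$ under $\alpha_{m+1}=\alpha_{m+2}=\cdots=0$, $\beta_{m'+1}=\beta_{m'+2}=\cdots=0$.

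I do not anticipate a real obstacle: the only points needing care are the Zariski-density bookkeeping that legitimizes ``it suffices to check on lattice points with $|\beta|\ge\delta$'' and the observation that this admissible region is stable under the specialization maps — both immediate once set up. If one prefers to argue entirely inside the combinatorial definition of $N_\delta$ without invoking the geometric meaning of $N^\delta_{\alpha,\beta}$, the same argument applies verbatim with ``relative Severi degree'' replaced by ``number of relative floor diagrams'', a quantity that manifestly ignores the weight-$i$ data whenever $\alpha_i$ or $\beta_i$ is zero.
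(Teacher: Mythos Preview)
Your proof is correct and takes a genuinely different route from the paper's. The paper argues \emph{internally}: it unpacks the combinatorial construction of $N_\delta$ as a sum over extended templates (Proposition~\ref{prop:mainformula}) and observes, via the proof of Lemma~\ref{lem:polyoffinaltemplate}, that each building block $R_{(\Lambda,A,B)}$ is already stable under appending a zero to $\alpha$ or $\beta$; summing then gives the identity. Your argument is \emph{external}: you treat Theorem~\ref{thm:relativenodepoly} as a characterization, note that the lattice points with $|\beta|\ge\delta$ are Zariski dense so the characterization pins down $N_\delta$ uniquely, and then observe that both the relative Severi degree and the combinatorial prefactor ignore trailing zeros. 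This has the pleasant consequence that stability is a formal corollary of Theorem~\ref{thm:relativenodepoly} alone, with no need to revisit the template machinery; the paper's approach, by contrast, avoids the density bookkeeping but requires going back into the explicit formula. Both are short; yours makes clearer that the result is forced by the defining property rather than by any accident of the construction.
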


In fact, even more is true.

\begin{proposition}
\label{prop:variables}
For $\delta \ge 1$, the relative node polynomial $N_\delta(\alpha,
\beta)$ is a polynomial in $d$, $|\beta|$, $\alpha_1, \dots,
\alpha_\delta$, and $\beta_1, \dots, \beta_\delta$, where $d = \sum_{i
  \ge 1} i (\alpha_i + \beta_i)$.
\end{proposition}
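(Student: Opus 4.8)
The plan is to extract the statement from the combinatorial description of $N_\delta(\alpha;\beta)$ underlying Theorem~\ref{thm:relativenodepoly}. As in the non-relative case, that description expresses the relative Severi degree $N^\delta_{\alpha,\beta}$ as a finite sum over local ``template''-type pieces $\Gamma$: a marked relative floor diagram $\D$ contributing to $N^\delta_{\alpha,\beta}$ splits into an \emph{essential part} --- bounded in size in terms of $\delta$ alone, since it carries all $\delta$ nodes --- and a long \emph{inessential part} made of trivial pieces, and a tangency point of order $i$ enters $\D$ as an end of weight $i$. The prefactor $1^{\beta_1}2^{\beta_2}\cdots(|\beta|-\delta)!/\beta!$ in~\eqref{eqn:relativenodepoly} already absorbs the weight- and symmetry-bookkeeping of the unconstrained ends.

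The first and main step is a rigidity statement: in any relative floor diagram with $\delta$ nodes, an end of weight $i>\delta$ --- fixed or unconstrained --- can only sit in the inessential part, in the unique standard fashion, and is never incident to the essential part. Heuristically, a high-weight end feeding into a nontrivial (branching) vertex forces an amount of local node defect on the order of its weight, so that weight cannot exceed $\delta$. Pinning this down in the relative floor-diagram language, and checking that the threshold is exactly $\delta$, is the step I expect to be the principal difficulty.

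Granting the rigidity, I would organize the sum defining $N^\delta_{\alpha,\beta}$ as follows. Choosing the essential part (finitely many options) and placing it together with the \emph{low} ends --- those of order $\le\delta$, i.e.\ the ones counted by $\alpha_1,\dots,\alpha_\delta$ and $\beta_1,\dots,\beta_\delta$ --- inside the inessential part contributes a factor depending only on the essential part, times binomial coefficients that are polynomial in $\alpha_1,\dots,\alpha_\delta,\beta_1,\dots,\beta_\delta$ and in the total length of the inessential part. The remaining data, namely the \emph{high} ends (order $>\delta$), enter only through the standard contribution dictated by the rigidity; a direct computation of these standard contributions, combined with the rescaling of~\eqref{eqn:relativenodepoly}, shows that the $\prod_i i^{\beta_i}$ and $\beta!$ factors cancel and that the residual dependence on the high-index $\alpha_i,\beta_i$ collapses to the two aggregates $d=\sum_i i(\alpha_i+\beta_i)$ (which, up to a correction bounded in terms of $\delta$, measures the length of the inessential part) and $|\beta|$ (the number of unconstrained ends). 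Summing over the finitely many essential parts then exhibits $N_\delta(\alpha;\beta)$ as a polynomial in $d$, $|\beta|$, $\alpha_1,\dots,\alpha_\delta$ and $\beta_1,\dots,\beta_\delta$. Finally, this form is visibly consistent with Theorem~\ref{thm:stability}: appending a vanishing high-index variable leaves $d$, $|\beta|$, and the low-index variables unchanged, so the stated identities hold automatically.
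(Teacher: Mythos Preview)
Your approach is correct and is essentially the same as the paper's, but you overestimate where the work lies. In the paper, Proposition~\ref{prop:variables} is a two-sentence corollary of the extended-template machinery already built for Theorem~\ref{thm:relativenodepoly}: each extended template $(\Lambda,A,B)$ in the sum~\eqref{eqn:mainformula} has $\delta(A)=\sum_{i,j} i\cdot j\cdot a_{ij}\le\delta$ and likewise $\delta(B)\le\delta$, so any nonzero entry $a_{ij}$ or $b_{ij}$ satisfies $j\le\delta$. Since the only places individual $\alpha_j,\beta_j$ enter (beyond $d$ and $|\beta|$) are the multinomial $\binom{\alpha}{a^T_1,a^T_2,\dots}$ and the polynomial $q_{(\Lambda,A,B)}$ of Lemma~\ref{lem:polyoffinaltemplate}, and both involve only columns of $A,B$, the claim follows.

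Your ``rigidity'' statement --- that an end of order $>\delta$ must be attached in the standard way at the last vertex --- is exactly the vanishing of columns $j>\delta$ in $A$ and $B$, and what you flag as the principal difficulty is in fact immediate from the definition $\delta(A)=\sum ij\,a_{ij}$. The rest of your outline (low ends contribute polynomially in $\alpha_1,\dots,\alpha_\delta,\beta_1,\dots,\beta_\delta$; high ends only through $d$ and $|\beta|$ after the rescaling) matches the structure of the proof of Lemma~\ref{lem:polyoffinaltemplate} and the derivation of~\eqref{eqn:2ndfactorcontribution}. So there is no new idea needed beyond what is already in the proof of Theorem~\ref{thm:relativenodepoly}; you are rediscovering the extended-template decomposition rather than citing it.
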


Using the combinatorial description, we provide a method to
compute the relative node polynomials for any $\delta$ (see
Sections~\ref{sec:relativenodepolys} and~\ref{sec:proofs}). We use it to compute $N_\delta(\alpha;
\beta)$ for $\delta \le 6$. Due to spacial constrains, we only
tabulate the cases $\delta \le 3$. The polynomials $N_0$
and $N_1$ already appeared (implicitly) in \cite[Section~4.2]{FM}.

\begin{theorem}
\label{thm:newrelativenodepolys}
The relative node polynomials $N_\delta(\alpha; \beta)$, for $\delta =
0, 1, 2, 3$ (resp., $\delta \le 6$) are as listed in
Appendix~\ref{app:relativenodepolys} (resp., as provided in the
ancillary files of this paper).
\end{theorem}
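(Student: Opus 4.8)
The plan is to reduce Theorem~\ref{thm:newrelativenodepolys} to an explicit, terminating computation based on the combinatorial description of $N_\delta(\alpha;\beta)$ developed in Sections~\ref{sec:relativenodepolys} and~\ref{sec:proofs}. By Theorem~\ref{thm:relativenodepoly}, for each fixed $\delta$ the relative Severi degree $N^\delta_{\alpha,\beta}$ equals $1^{\beta_1}2^{\beta_2}\cdots\frac{(|\beta|-\delta)!}{\beta!}$ times the polynomial $N_\delta(\alpha;\beta)$ whenever $|\beta|\ge\delta$, and by Proposition~\ref{prop:variables} this polynomial lives in the finitely many variables $d,|\beta|,\alpha_1,\dots,\alpha_\delta,\beta_1,\dots,\beta_\delta$ and has total degree $3\delta$. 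So for each $\delta\le 6$ the unknown polynomial has a known (finite) monomial support, hence is determined by finitely many of its values. The strategy is therefore: (i) generate enough relative Severi degrees $N^\delta_{\alpha,\beta}$ for tuples $(\alpha,\beta)$ with $|\beta|\ge\delta$ via the Caporaso--Harris recursion~\cite{CH98} (equivalently, via the floor-diagram count underlying the combinatorial definition); (ii) divide by the combinatorial prefactor to obtain values of $N_\delta(\alpha;\beta)$; (iii) solve the resulting linear system for the coefficients; and (iv) verify the result against additional data points not used in the interpolation.

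First I would fix a degree bound: since $\deg N_\delta=3\delta$ and the relevant variables are $d,|\beta|,\alpha_1,\dots,\alpha_\delta,\beta_1,\dots,\beta_\delta$ (subject to the relation $d=\sum_i i(\alpha_i+\beta_i)$, which I would use to eliminate, say, $\beta_1$ or treat $d$ as a dependent quantity), the space of candidate polynomials is finite-dimensional with an explicitly computable dimension. I would then enumerate a set of evaluation tuples $(\alpha,\beta)$ with $|\beta|\ge\delta$ large enough that~\eqref{eqn:relativenodepoly} applies, chosen to be unisolvent for that polynomial space, and compute each $N^\delta_{\alpha,\beta}$. The cleanest route to these numbers is the floor-diagram / template algorithm already set up in the earlier sections (this is how the non-relative $N_\delta(d)$ were computed in~\cite{FB}); alternatively one runs the Caporaso--Harris recursion directly. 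Dividing by $1^{\beta_1}2^{\beta_2}\cdots\frac{(|\beta|-\delta)!}{\beta!}$ gives the values of $N_\delta(\alpha;\beta)$, and solving the linear system yields the claimed closed forms. The cases $\delta=0,1$ reproduce the formulas implicit in~\cite[Section~4.2]{FM}, which serves as a first consistency check.

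The main obstacle is computational bookkeeping and scale rather than any conceptual difficulty: for $\delta=6$ the polynomial has total degree $18$ in roughly a dozen variables, so the number of monomials—and hence the number of floor-diagram enumerations needed—is large, and each enumeration over floor diagrams of the relevant size is itself nontrivial. Care is needed to (a) ensure the chosen evaluation points genuinely satisfy the polynomiality hypothesis $|\beta|\ge\delta$ so that~\eqref{eqn:relativenodepoly} is valid, (b) keep the floor-diagram (or Caporaso--Harris) computation exact, since a single arithmetic error propagates into every coefficient, and (c) confirm that the chosen nodes form a unisolvent set for the monomial support predicted by Proposition~\ref{prop:variables}. Once the polynomials are produced, I would validate them by evaluating at many further tuples—including tuples with several nonzero tangency orders $\alpha_i,\beta_i$—and checking agreement with independently computed relative Severi degrees; I would also check internal consistency with the stability identities of Theorem~\ref{thm:stability} and with the known non-relative node polynomials $N_\delta(d)$ under the specialization $\alpha=(0,0,\dots)$, $\beta=(d,0,\dots)$. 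The resulting explicit polynomials for $\delta\le 3$ are recorded in Appendix~\ref{app:relativenodepolys}, and those for $\delta\le 6$ in the ancillary files.
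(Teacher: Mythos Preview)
Your approach is correct but takes a genuinely different route from the paper. The paper computes $N_\delta(\alpha;\beta)$ \emph{directly} from formula~(\ref{eqn:mainformula}) of Proposition~\ref{prop:mainformula}: it enumerates all templates and extended templates of total cogenus~$\delta$, computes the polynomial contribution of each summand using the method in the proof of Lemma~\ref{lem:polyoffinaltemplate} together with the techniques of \cite[Section~3]{FB}, and sums. This yields the polynomial in closed form without any interpolation step. You instead treat $N_\delta(\alpha;\beta)$ as a black box of known degree and variable set (via Theorem~\ref{thm:relativenodepoly} and Proposition~\ref{prop:variables}), generate enough numerical relative Severi degrees by Caporaso--Harris, and solve a linear system for the coefficients. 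Both are legitimate finite computer calculations. The paper's direct method is structurally more informative---it is precisely what drives the leading-term analysis of Section~\ref{sec:relativecoefficients}---and sidesteps the need to certify unisolvence of the evaluation nodes. Your interpolation method is conceptually lighter, using only the existence statements already proved plus an independent numerical oracle, but is likely heavier in practice for $\delta=6$, where the monomial support in roughly a dozen variables at total degree~$18$ forces a large number of Caporaso--Harris evaluations.
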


The polynomial
$N_\delta(\alpha; \beta)$ is of degree $3\delta$ by
Theorem~\ref{thm:relativenodepoly}. We compute the
terms of $N_\delta(\alpha; \beta)$ of degree $\ge 3 \delta - 2$.

\begin{theorem}
\label{thm:coefficients}
The terms of $N_\delta(\alpha; \beta)$ of (total) degree $\ge 3
\delta - 2$ are given by
\begin{displaymath}
\scriptsize
\begin{split}
N_\delta(\alpha; \beta) = & \frac{3^\delta}{\delta !}
\Bigg[ d^{2\delta} |\beta|^\delta 
 +\frac{\delta}{3} \Big[ -\frac{3}{2}(\delta-1)d^2-8 d |\beta| + 
|\beta|\alpha_1 + d \beta_1 + |\beta| \beta_1\Big] d^{2\delta-2}
|\beta|^{\delta-1} + \\
& + \frac{\delta }{9} \Big[
\frac{3 }{8} (\delta -1) (\delta-2)(3\delta - 1) d^4 + 12\delta
(\delta -1) d^3 |\beta| +
(11\delta + 1) d^2 |\beta|^2 -\frac{3}{2}\delta (\delta - 1) (d^3 \beta_1 + d^2 |\beta|
\alpha_1) +\\
&\quad -\frac{1}{2} (\delta + 5) (3 \delta - 2) d^2 |\beta| \beta_1 
 -8(\delta - 1)(d |\beta|^2 \alpha_1 + d |\beta|^2 \beta_1) 
+ \frac{1}{2}(\delta - 1) (d^2 \beta_1^2 + |\beta|^2 \alpha_1^2 +
|\beta|^2 \beta_1^2) +\\
&\quad +  (\delta -1)(d |\beta| \alpha_1 \beta_1 + d |\beta| \beta_1^2 +
 |\beta|^2 \alpha_1 \beta_1)
\Big] d^{2\delta - 4}
 |\beta|^{\delta - 2} + \cdots \Bigg],
\end{split}
\end{displaymath}
where $d = \sum_{i \ge 1} i (\alpha_i + \beta_i)$.
\end{theorem}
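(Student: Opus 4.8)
The plan is to obtain Theorem~\ref{thm:coefficients} as a refinement of the proof of Theorem~\ref{thm:relativenodepoly}, by carrying out the same combinatorial computation to one more order of precision. Recall that the relative node polynomial is defined via a sum over (decorated) floor diagrams, weighted by multiplicities. For fixed $\delta$, after the rescaling in~\eqref{eqn:relativenodepoly}, the generating function $N_\delta(\alpha;\beta)$ can be written as a polynomial expression indexed by the finitely many ``combinatorial types'' of floor diagrams with $\delta$ nodes; each type contributes a product of binomial-type factors in the $\alpha_i$, $\beta_i$, and the derived quantities $d$ and $|\beta|$. The leading behavior comes from the ``generic'' floor diagrams (those with the maximal number of floors in general position), and lower-order terms come from degenerations where floors collide or absorb elongations.

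First I would isolate the \emph{leading term}. Using the interpretation of $N^{d,\delta}_{\alpha,\beta}$ via floor diagrams of genus $1-\delta$ (equivalently, $\delta$ independent ``cycles'' added to a path), the dominant contribution is that of a linear chain of $d$ floors with $\delta$ doubled edges placed independently; summing the multiplicities and accounting for the markings $\alpha,\beta$ produces, after rescaling by $1^{\beta_1}2^{\beta_2}\cdots(|\beta|-\delta)!/\beta!$, the term $\tfrac{3^\delta}{\delta!}\,d^{2\delta}|\beta|^\delta$. This matches the known non-relative leading term $\tfrac{3^\delta}{\delta!}d^{2\delta}$ of S.~Fomin--G.~Mikhalkin after the substitution $\alpha=(0,0,\dots)$, $\beta=(d,0,0,\dots)$ (so $|\beta|=d$), which is a useful consistency check to run at every stage.

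Next I would expand to orders $3\delta-1$ and $3\delta-2$. The corrections arise from a bounded list of local modifications of the generic diagram: (i) one doubled edge can be replaced by an edge of weight depending on a marking, contributing terms linear in $\alpha_1$ or $\beta_1$ (only the index-$1$ markings survive at this order, since higher $\alpha_i,\beta_i$ are suppressed by extra degree — this is exactly the mechanism behind Proposition~\ref{prop:variables}); (ii) two of the $\delta$ cycles can share a floor, contributing the $\delta(\delta-1)$-type terms; (iii) the multiplicity weights expand as polynomials whose subleading coefficients produce the $d^2$, $d|\beta|$, $d^3$, $d^2|\beta|^2$ etc.\ terms. I would organize this as a finite sum $\sum_{k\ge 0} c_k(\alpha,\beta)\, d^{2\delta-2k}|\beta|^{\delta-k}$ and compute $c_0$, $c_1$, $c_2$ explicitly; the stated formula is precisely $c_0 + c_1 + c_2$ times $3^\delta/\delta!$. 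At each step the coefficients are pinned down by (a) the known non-relative specialization, (b) the stability of Theorem~\ref{thm:stability} (which forces dependence only on $\alpha_1,\dots,\alpha_\delta$, $\beta_1,\dots,\beta_\delta$, $d$, $|\beta|$), and (c) direct evaluation of the floor-diagram sum on the handful of relevant diagrams.

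The main obstacle is the bookkeeping in step~(iii): keeping track of which degenerate floor diagrams contribute at order $3\delta-2$ and computing their multiplicities and their markings' combinatorial factors without error. Unlike the leading term, the order-$(3\delta-2)$ coefficient mixes genuinely quadratic interactions among the $\delta$ cycles with the subleading expansion of binomial coefficients like $\binom{|\beta|-\delta}{\cdots}$ and $\binom{d}{\cdots}$, so the cross-terms proliferate. I expect to tame this by first verifying the formula for small $\delta$ (using the explicit polynomials of Theorem~\ref{thm:newrelativenodepolys} for $\delta\le 6$), then confirming that the general-$\delta$ expression specializes correctly and is consistent with the non-relative result of \cite{FB,FM}; once the shape $\sum c_k d^{2\delta-2k}|\beta|^{\delta-k}$ with $c_k$ polynomial in $\delta$ of the predicted degree is established, matching finitely many values of $\delta$ determines the $c_k$ uniquely.
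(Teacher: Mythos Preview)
Your overall strategy---analyzing the combinatorial sum behind Theorem~\ref{thm:relativenodepoly} and isolating the pieces that survive at top degree---matches the paper's. The execution, however, is different and more structured in the paper. Rather than working with floor diagrams and heuristic ``local modifications,'' the paper uses the template/extended-template decomposition of Section~\ref{sec:relativenodepolys} and proves a \emph{defect lemma}: the summand of~\eqref{eqn:mainformula} indexed by $\tilde\Gamma=(\Gamma_1,\dots,\Gamma_m)$ and $(\Lambda,A,B)$ contributes only in degree $\le 3\delta-\defect(\tilde\Gamma)-\defect(\Lambda,A,B)$, where the defects are explicit combinatorial invariants. This pins down exactly the finite list of $(\tilde\Gamma,\Lambda,A,B)$ relevant at order $t$. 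The template piece is then packaged into a single polynomial $R_{\tilde\delta}(d)$ (see~\eqref{eqn:Rpoly}), whose leading coefficients are computed for \emph{general} $\tilde\delta$ by a variant of Algorithm~2 in~\cite{FB}; these are multiplied against the finitely many extended-template factors $q_{(\Lambda,A,B)}$ from Lemma~\ref{lem:polyoffinaltemplate} (with a Stirling-number expansion handling the ratio $(|\beta|-\delta(B))!/(|\beta|-\delta)!$), and the terms of degree $\ge 3\delta-2$ are read off directly. No interpolation in $\delta$ is performed.

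Your final interpolation step is where the approaches genuinely diverge, and as written it has a gap. To determine the coefficients by matching the values $\delta\le 6$, you must know in advance that each coefficient of $N_\delta$ is $3^\delta/\delta!$ times a polynomial in $\delta$, together with an a~priori bound on its degree. In the paper this is Proposition~\ref{prop:coefficientpolynomiality}, but it is proved only as a \emph{consequence} of the direct computation sketched above (via the polynomiality-in-$\tilde\delta$ of the coefficients of $R_{\tilde\delta}$ and of the Stirling coefficients in~\eqref{eqn:stirling}), not independently. So your interpolation shortcut is circular unless you supply a separate argument for polynomiality-in-$\delta$ and a degree bound---and at that point the direct computation has already produced the answer you are trying to interpolate.
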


Theorem \ref{thm:coefficients} can
be extended to terms of $N_\delta(\alpha,
\beta) $ of degree $\ge 3 \delta - 7$ (see
Remark~\ref{rmk:morecoefficients}).
In Theorem \ref{thm:coefficients}, we observe that all coefficients of $N_\delta(\alpha; \beta)$ in
degree $\ge 3\delta - 2$ are of the form $\tfrac{3^\delta}{\delta !}$
times a polynomial in $\delta$. Without computing the coefficients, we can
extend this further. It is
conceivable to expect this property of the coefficients to hold for arbitrary degrees,
which, in the special case of non-relative Severi degrees, was shown by N.~Qviller~\cite{Qv10}.

\begin{proposition}
\label{prop:coefficientpolynomiality}
Every coefficient of $N_{\delta}(\alpha; \beta)$ in degree $\ge 3
\delta - 7$ is given, up to a factor of $\tfrac{3^\delta}{\delta !}$, by a polynomial in $\delta$ with rational coefficients.
\end{proposition}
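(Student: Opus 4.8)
The plan is to trace through the combinatorial machinery that produces $N_\delta(\alpha;\beta)$ and show that the $\delta$-dependence enters only through the factor $\tfrac{3^\delta}{\delta!}$ and through polynomial expressions, as far down as degree $3\delta-7$. The relative node polynomial is extracted (as in the non-relative case of \cite{FB}) from a generating-function/template computation: relative Severi degrees are counted by weighted (labeled) floor diagrams, and the contributions organize into finitely many ``templates'' $\Gamma$ — the connected pieces of a floor diagram once the long vertical chain has been contracted. Each template $\Gamma$ of \emph{length} $\ell(\Gamma)$ contributes a polynomial $Q_\Gamma(d,|\beta|,\alpha_1,\dots,\beta_1,\dots)$ times a binomial-type factor recording where $\Gamma$ can be inserted along the chain, and the total drop in degree from the top ($3\delta$) is governed by the \emph{cardinality} $|E(\Gamma)|$ or an analogous statistic. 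The first step is therefore to make precise the statement: ``in degree $\ge 3\delta-7$'' corresponds to using only templates with a bounded statistic, hence only \emph{finitely many} templates, independently of $\delta$.

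The second step is to analyze, for each such fixed template $\Gamma$, how its contribution depends on $\delta$. Here the key point is that $\Gamma$ itself is a fixed finite combinatorial object; the only role of $\delta$ is (i) to fix the \emph{number} of interior vertices of the contracted chain into which copies of templates are inserted, which produces binomial coefficients $\binom{k - (\text{shift})}{\ell(\Gamma)}$ and similar, and (ii) to bound how many templates (and in which multiplicities) can simultaneously appear. For a single template the insertion factor is $\binom{\,\cdot\,}{\ell(\Gamma)}$ evaluated at arguments that are affine in $\delta$ and in the chain-position parameters; summing a polynomial in the position against such binomials (a Vandermonde-type/finite-difference summation) yields again a polynomial in $\delta$, with rational coefficients, times the overall normalization. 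For several templates appearing together one gets products of such binomials and a multinomial accounting for the $\delta!$ ways of ordering the insertions; this is exactly where the $\tfrac{3^\delta}{\delta!}$ arises — each of the $\delta$ ``floors'' of the trivial part contributes a factor $3$ (the local count $3 = \deg$ of a single node along the chain, cf.\ the leading term $\tfrac{3^\delta}{\delta!}d^{2\delta}|\beta|^\delta$), and the $\delta!$ is the symmetry of the unordered insertion. I would make this rigorous by reindexing the sum over configurations of templates of total weight $\le 7$ and extracting a closed form, using that the ``trivial template'' (a single floor, contributing the factor $3$) can appear with multiplicity $\delta - O(1)$ while all nontrivial templates appear with multiplicity $O(1)$.

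The main obstacle is bookkeeping: one must show that every one of the finitely many ``correction'' configurations (nontrivial templates together with the appropriate number of trivial floors) contributes $\tfrac{3^\delta}{\delta!}$ times a polynomial in $\delta$, and that these add up without the normalization being disturbed — in particular that no configuration contributes a genuinely transcendental-in-$\delta$ term such as an unsummed $3^\delta$ with the wrong power, or a binomial that fails to collapse to a polynomial. The cleanest route is to prove a single lemma: \emph{if $f(x)$ is a polynomial with rational coefficients and $c,r$ are fixed integers, then $\sum_{x} f(x)\binom{\delta - c}{r}\cdot(\text{fixed data depending on }x)$, summed over the bounded range dictated by a template configuration, equals $\tfrac{3^\delta}{\delta!}$ times a polynomial in $\delta$}. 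Granting this lemma, the proposition follows by applying it termwise to the (finitely many, $\delta$-independent) template configurations of weight $\le 7$; the bound $3\delta - 7$ is then simply the degree floor below which new, unbounded families of templates would enter and the argument would break. I expect the degree count and the verification that the insertion binomials have the asserted affine-in-$\delta$ arguments to be the delicate parts; the algebra of the finite sums is routine once that structure is in place.
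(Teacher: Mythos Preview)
Your overall picture---template decomposition, a bounded number of ``heavy'' pieces and a growing number of ``light'' ones---is in the right spirit, but there are two concrete gaps that prevent the argument from going through as written.

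First, you conflate the degree $d$ of the floor diagram (the length of the chain) with the cogenus $\delta$. Templates are inserted along a chain of $d$ vertices, and their positions $k_s$ range in $\{1,\dots,d\}$; $\delta$ enters only as the total cogenus $\sum_s\delta(\Gamma_s)+\delta(\Lambda,A,B)$. So the ``insertion binomials'' have arguments affine in $d$, not in $\delta$, and your proposed key lemma---a bounded sum of polynomials times $\binom{\delta-c}{r}$ equalling $\tfrac{3^\delta}{\delta!}$ times a polynomial---cannot be right as stated: a finite sum of polynomials in $\delta$ never produces the factor $\tfrac{3^\delta}{\delta!}$. That factor emerges instead from the \emph{iterated discrete integral}
\[
R_{\tilde\delta}(d)=\sum_{(\Gamma_1,\dots,\Gamma_m)}\prod_s\mu(\Gamma_s)\sum_{k_m}\cdots\sum_{k_1}\prod_s P_{\Gamma_s}(k_s),
\]
summed over \emph{all} collections of templates of total cogenus $\tilde\delta$. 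The number $m$ of templates here can be as large as $\tilde\delta$ (the cogenus-$1$ templates, of which there are two, may each appear many times), so this is not a $\delta$-independent finite sum over configurations.

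Second---and this is the real obstacle---the assertion that the high-degree coefficients of $R_{\tilde\delta}(d)$ are polynomial in $\tilde\delta$ is exactly the nontrivial content. In the paper this is not proved conceptually; it is obtained via Algorithm~2 of \cite{FB}, which reduces the question to the polynomiality (in $\tilde\delta$) of solutions of certain polynomial difference equations, and that polynomiality has been \emph{verified} only for $t\le 7$. That computational verification is the actual source of the threshold $3\delta-7$, not a finiteness-of-templates argument. The remaining ingredient---that the extended-template factor contributes polynomially in $\delta$---comes from expanding $\prod_{i=\delta(B)}^{\delta-1}(|\beta|-i)$ via Stirling numbers of the first kind, whose coefficients $s(\delta',\delta'-t)$ are polynomial in $\delta'$ by the Stirling-polynomial identity. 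Your sketch does not supply either of these two mechanisms.
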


In 1997, L.~G\"ottsche~\cite{Go} conjectured universal polynomiality for Severi
degrees of smooth polarized projective surfaces $(S, \L)$, where $\L$
is an ample line bundle on $S$. More
precisely, he conjectured, for any
fixed number of nodes, the existence of a universal polynomial
that, evaluated at Chern numbers of $(S, \L)$, equals the Severi degree of $(S, \L)$, provided that $\L$ is
sufficiently ample. G\"ottsche's conjecture was recently proved by the
celebrated work of Y.-J. Tzeng~\cite{Tz10}.

Our approach to planar enumerative geometry is combinatorial and inspired by
\emph{tropical geometry}, in which one replaces a
subvariety of a complex algebraic torus by a piecewise linear polyhedral
complex (see, for example, \cite{Ga06,RST05,SS04}). By the celebrated
Correspondence Theorem of G.~Mikhalkin~\cite[Theorem~1]{Mi03}, one can
replace the algebraic curve count in $\CC \PP^2$ by an enumeration
of certain \emph{tropical curves}. E.~Brugall\'e and
G.~Mikhalkin~\cite{BM1,BM2} introduced a class of decorated graphs, called
{\it (marked) floor diagrams} (see Section~\ref{sec:floordiagrams}),
which, if weighted correctly, are equinumerous to such tropical curves.
We use a version of these results that incorporates tangency
conditions due to S.~Fomin and G.~Mikhalkin~[Theorem~\ref{thm:relcorrespondence}]\cite{FM}. S.~Fomin and G.~Mikhalkin also introduced a
\emph{template decomposition} of floor diagrams, which we extend to
be suitable for the relative case. This decomposition is crucial in
the proofs of all results in this paper, as is the reformulation of
algebraic curve counts in terms of floor diagrams.

To the author's knowledge,  the polynomiality of the well-studied
relative Severi degrees $N^\delta_{\alpha, \beta}$ was not expected
and came as a surprise to many experts in the field. As the methods in
this paper are a natural, somewhat technical extension of those in
\cite{FB,FM},
 our contribution can also be seen as establishing
an unexpected result on enumerative geometry of plane curves, a field
with extensive history and the focus of immense recent study. 

In related work, F.~Ardila and the author~\cite{AB10} generalized the polynomiality of
Severi degrees to a family of (in general non-smooth) toric surfaces including $\CC
\PP^1\!$~$\times$~$\!\CC \PP^1$ and Hirzebruch surfaces. A main feature
is that we showed polynomiality not only in the multi-degree of the curves but also
``in the surface itself.''
In~\cite{BGM10}, A.~Gathmann, H.~Markwig and the author defined \emph{Psi-floor
diagrams} that enumerate
plane curves that satisfy point and tangency conditions, and
conditions given by \emph{Psi-classes}.
  We proved a Caporaso--Harris type recursion
 for Psi-floor diagrams, and showed that 
 \emph{relative descendant Gromov-Witten invariants} equal their tropical counterparts.


This paper is organized as follows. In Section \ref{sec:floordiagrams},
we review the definition of floor diagrams and their markings. In
Section \ref{sec:relativenodepolys}, we introduce a new decomposition
of floor diagrams compatible with tangency conditions. In Section~\ref{sec:proofs}, we
prove Theorems
\ref{thm:relativenodepoly}, \ref{thm:stability} and
\ref{thm:newrelativenodepolys} and Proposition~\ref{prop:variables}. In
Section~\ref{sec:relativecoefficients}, we prove Theorem~\ref{thm:coefficients} and
Proposition~\ref{prop:coefficientpolynomiality}.

\medskip

{\bf Acknowledgements.}
The author thanks the referees for helpful and careful comments and
suggestion that led to significant improvements of the article.
The author was partially supported by a Rackham One-Term Dissertation Fellowship and  by the NSF grant DMS-055588.

\section{Floor diagrams and relative markings}
\label{sec:floordiagrams}

Floor diagrams are a class of decorated graphs which, if weighted correctly, enumerate plane curves with
prescribed properties. They were 
introduced by E.~Brugall\'e and G.~Mikhalkin \cite{BM1,BM2} in the
non-relative case and generalized to the relative setting by S.~Fomin
and G.~Mikhalkin \cite{FM}. We begin with a review of the relative setup, following notation of \cite{FM}
(where floor diagrams
are called ``labeled floor diagrams'').

\begin{definition}
A \emph{floor diagram} $\D$ on a vertex set $\{1, \dots, d\}$ is a
directed graph (possibly with
multiple edges) with edge weights $w(e) \in \ZZ_{>0}$
satisfying:
\begin{enumerate}
\item The edge directions preserve the vertex order, i.e., for each
  edge $i \to j $ of $\D$ we have $i
  < j$.
\item (Divergence Condition) For each vertex $j$ of $\D$:
\[
\text{div}(j) \stackrel{\text{def}}{=} \sum_{ \tiny
     \begin{array}{c}
  \text{edges }e\\
j \stackrel{e}{\to} k
     \end{array}
} w(e) -   \sum_{ \tiny
     \begin{array}{c}
  \text{edges }e\\
i \stackrel{e}{\to} j
     \end{array}
} w(e)\le 1.
\]
\end{enumerate}
\vspace{-1mm}
This means that at every vertex of $\D$ the total weight of the outgoing edges
is larger by at most 1 than the total weight of the incoming edges.
\end{definition}

The \emph{degree} of a  floor
diagram $\D$ is the number of its vertices. $\D$ is
\emph{connected} if its underlying graph is. Note that in \cite{FM}
floor diagrams are required to be connected. If $\D$ is
connected its \emph{genus} 
is
the genus of the underlying graph
 (or the first Betti number of the underlying topological space). The \emph{cogenus}
of a connected  floor diagram $\D$
of degree $d$ and genus $g$ is given by
$\delta(\D) = \frac{(d-1)(d-2)}{2} - g$. If $\D$ is not connected, let
$d_1, d_2, \dots$ and $\delta_1, \delta_2, \dots$ be the degrees and
cogenera, respectively, of its connected components. The \emph{cogenus} of $\D$
is $\delta(\D) = \sum_j \delta_j + \sum_{j < j'} d_j d_{j'}$. Via the
correspondence between algebraic curves and floor diagrams
\cite[Theorem 2.5]{BM2}, these notions correspond literally to
the respective analogues for algebraic curves. Connectedness
corresponds to irreducibility. Lastly, a marked floor
diagram $\D$ has \emph{multiplicity}
 \footnote{This agrees with the multiplicity of a tropical plane curve
   degenerating to a particular floor diagram~\cite{FM}.}
\[
\mu(\D) \stackrel{\text{def}}{=} \prod_{\text{edges }e} w(e)^2.
\]

We draw floor diagrams using the convention that vertices in
increasing order are arranged left to right. Edge weights of $1$
are omitted.

\begin{example}
\label{ex:floordiagram}
An example of a floor diagram of degree $d = 4$, genus $ g=1$,
 cogenus $\delta = 2$, divergences $1,1,0,-2$, and multiplicity $\mu =
 4$ is drawn below.
\begin{center}
\begin{picture}(50,40)(30,-18)\setlength{\unitlength}{4pt}\thicklines
\oooo\Eeee\eEee\eeOe
\put(15,1.5){\makebox(0,0){$2$}} 
\put(7,0){\vector(1,0){1}} 
\put(17,0){\vector(1,0){1}}
\put(27.5,1.75){\vector(2,-1){1}}
\put(27.5,-1.75){\vector(2,1){1}}
\end{picture}
\end{center}
\end{example}

To enumerate algebraic curves satisfying tangency conditions, we need
the notion of marked floor diagrams.
Our notation, which is
more convenient for our purposes,
differs slightly from \cite{FM}, where S.~Fomin and G.~Mikhalkin define relative markings relative to the
partitions
$\lambda = \langle 1^{\alpha_1} 2^{\alpha_2} \cdots \rangle$
and $\rho = \langle 1^{\beta_1} 2^{\beta_2} \cdots \rangle$. In the
sequel, all sequences are sequences of non-negative
integers with finite support.

\begin{definition}
\label{def:relativemarking}
For two sequences $\alpha, \beta$ we define an \emph{$(\alpha,
  \beta)$-marking} of a  floor diagram~$\D$ of 
degree $d \! = \!\sum_{i \ge 1}\! i(\alpha_i \!+ \!\beta_i)$ by the following
four step process, which we illustrate in the case of Example~
\ref{ex:floordiagram} for $\alpha = (1, 0, 0, \dots)$ and $\beta =
(1,1,0,0,\dots)$.

{ \bf Step 1:} Fix a pair of collections of sequences $( \{ \alpha^i
\}, \{ \beta^i \} )$, where $i$ runs over the vertices of $\D$, such that:
\begin{enumerate}
\item The sums over each collection satisfy $\sum_{i = 1}^d \alpha^i =
  \alpha$ and  $\sum_{i =1}^d \beta^i =
  \beta$.
\item For all vertices $i$ of $\D$, we have $\sum_{j \ge 1} j(\alpha_j^i+ \beta_j^i)= 1 - \text{div}(i)$.
\end{enumerate} 
The second condition says that the ``degree of the pair $(\alpha^i,
\beta^i)$'' is compatible with the divergence at vertex $i$.
Each such pair $( \{ \alpha^i
\}, \{ \beta^i \} )$ is called \emph{compatible} with $\D$ and $(\alpha,
\beta)$. We omit writing down trailing zeros.

\begin{center}
\begin{picture}(50,58)(20,-40)\setlength{\unitlength}{4pt}\thicklines
\oooo\Eeee\eEee\eeOe
\put(15,1.5){\makebox(0,0){$2$}} 
\put(7,0){\vector(1,0){1}} 
\put(17,0){\vector(1,0){1}} 
\put(27.5,1.75){\vector(2,-1){1}}
\put(27.5,-1.75){\vector(2,1){1}}

\put(-7,-4){\makebox(0,0){$\alpha^i=$}}
\put(30,-4){\makebox(0,0){$(1)$}}
\put(-7,-8){\makebox(0,0){$\beta^i=$}}
\put(20,-8){\makebox(0,0){$(1)$}}
\put(30,-8){\makebox(0,0){$(0,1)$}}
\end{picture}
\end{center}

{ \bf Step 2:} For each vertex $i$ of $\D$ and every $j \ge 1$, create
$\beta_j^i$ new
vertices, called \emph{$\beta$-vertices} and illustrated as
\begin{picture}(4,0)(0,0)\setlength{\unitlength}{4pt}\thicklines
\put(1,1){\circle*{2}}
\end{picture}
, and connect them to $i$ with new edges of weight $j$
directed away from $i$.  For each vertex $i$ of $\D$ and every $j \ge 1$, create
$\alpha^i_j $ new
vertices, called \emph{$\alpha$-vertices} and illustrated as
\begin{picture}(5,0)(0,0)\setlength{\unitlength}{4pt}\thicklines
\put(1,1){\circle{2}}
\put(1,1){\circle*{1}}
\end{picture}
, and connect them to $i$ with new edges of weight~$j$
directed away from $i$.

\begin{center}
\begin{picture}(50,55)(40,-40)\setlength{\unitlength}{4pt}\thicklines
\oooo\Eeee\eEee\eeOe
\put(15,1.5){\makebox(0,0){$2$}} 
\put(7,0){\vector(1,0){1}} 
\put(17,0){\vector(1,0){1}} 
\put(27.5,1.75){\vector(2,-1){1}}
\put(27.5,-1.75){\vector(2,1){1}}

\put(-7,-4){\makebox(0,0){$\alpha^i=$}}
\put(30,-4){\makebox(0,0){$(1)$}}
\put(-7,-8){\makebox(0,0){$\beta^i=$}}
\put(20,-8){\makebox(0,0){$(1)$}}
\put(30,-8){\makebox(0,0){$(0,1)$}}

\put(20.7,0.7){\line(1,1){4}}
\put(20.7,0.7){\vector(1,1){3}}
\put(25,5){\circle*{2}}
\put(30.6,0.8){\line(1,1){4}}
\put(30.8,0.6){\line(2,1){8.3}}
\put(30.6,0.8){\vector(1,1){2.5}}
\put(30.8,0.6){\vector(2,1){5}}
\put(35,5){\circle*{2}}
\put(40,5){\circle{2}}
\put(40,5){\circle*{1}}
\put(31.5,4){\makebox(0,0){$2$}} 
\end{picture}
\end{center}

{ \bf Step 3:} Subdivide each edge of the original
 floor diagram $\D$ into two directed edges by introducing a new
vertex for each edge. The new edges inherit their weights and
orientations. Call the resulting graph $\tilde{\D}$.

\begin{center}
\begin{picture}(50,43)(45,-17)\setlength{\unitlength}{4pt}\thicklines
\oooo \Eeee \eEee \eeOe

\put(12.5,2){\makebox(0,0){$2$}}
\put(17.5,2){\makebox(0,0){$2$}}
\put(2.5,0){\vector(1,0){1}}
\put(7.5,0){\vector(1,0){1}}
\put(12.5,0){\vector(1,0){1}}
\put(17.5,0){\vector(1,0){1}}
\put(27.5,1.75){\vector(2,-1){1}}
\put(27.5,-1.75){\vector(2,1){1}}
\put(22.5,1.75){\vector(2,1){1}}
\put(22.5,-1.75){\vector(2,-1){1}}

\put(5,0){\circle*{2}}
\put(15,0){\circle*{2}}
\put(25,2.5){\circle*{2}}
\put(25,-2.5){\circle*{2}}

\put(20.7,0.7){\line(1,1){4}}
\put(20.7,0.7){\vector(1,1){3}}
\put(25,5){\circle*{2}}
\put(30.6,0.8){\line(1,1){4}}
\put(30.8,0.6){\line(2,1){8.3}}
\put(30.6,0.8){\vector(1,1){2.5}}
\put(30.8,0.6){\vector(2,1){5}}
\put(35,5){\circle*{2}}
\put(40,5){\circle{2}}
\put(40,5){\circle*{1}}
\put(31.5,4){\makebox(0,0){$2$}} 
\end{picture}
\end{center}

{ \bf Step 4:} Linearly order the vertices of $\tilde{\D}$ extending
the order of the vertices of the original floor diagram $\D$ such that, as in $\D$, each edge is directed from a
smaller vertex to a larger vertex.
Furthermore, we require that the $\alpha$-vertices are largest among
all vertices, and
for every pair of $\alpha$-vertices $i' > i$, the weight of the $i'$-adjacent
edge is larger than or equal to the weight of the $i$-adjacent edge.

\begin{center}
\begin{picture}(50,48)(75,-17)\setlength{\unitlength}{4pt}\thicklines
\put(12.5,2){\makebox(0,0){$2$}}
\put(17.5,2){\makebox(0,0){$2$}}
\multiput(0,0)(10,0){3}{\circle{2}}
\multiput(40,0)(10,0){1}{\circle{2}}
\multiput(5,0)(10,0){5}{\circle*{2}}
\multiput(30,0)(10,0){1}{\circle*{2}}
\put(50,0){\circle{2}}
\put(50,0){\circle*{1}}
\put(1,0){\line(1,0){8}}
\put(11,0){\line(1,0){8}}
\put(2.5,0){\vector(1,0){1}}
\put(7.5,0){\vector(1,0){1}}
\put(12.5,0){\vector(1,0){1}}
\put(17.5,0){\vector(1,0){1}}
\put(21,0){\line(1,0){3}}
\put(22.5,0){\vector(1,0){1}}
\qbezier(20.6,0.6)(22,6)(25,6)\qbezier(25,6)(28,6)(29.4,0.6)
\put(25,6){\vector(1,0){1}}
\qbezier(20.8,-0.6)(23.75,-3)(27.5,-3)\qbezier(27.5,-3)(32.25,-3)(34.2,-0.6)
\put(27.5,-3){\vector(1,0){1}}
\qbezier(25.8,0.6)(28.75,3)(32.5,3)\qbezier(32.5,3)(37.25,3)(39.2,0.6)
\put(32.5,3){\vector(1,0){1}}
\put(36,0){\line(1,0){3}}
\put(37.5,0){\vector(1,0){1}}
\put(41,0){\line(1,0){3}}
\put(42.5,0){\vector(1,0){1}}
\put(42.5,-2){\makebox(0,0){$2$}}
\qbezier(40.8,0.6)(42,3)(45,3)\qbezier(45,3)(48,3)(49.2,0.6)
\put(45,3){\vector(1,0){1}}

\end{picture}
\end{center}

We call the extended graph $\tilde{\D}$,
together with the linear order on its vertices, an \emph{$(\alpha,
  \beta)$-marked floor diagram}, or an \emph{$(\alpha, \beta)$-marking} of the floor diagram $\D$.
\end{definition}

We need to count $(\alpha,\beta)$-marked floor
diagrams up to equivalence. Two $(\alpha,\beta)$-markings
$\tilde{\D}_1$, $\tilde{\D}_2$ of a floor diagram $\D$ are \emph{equivalent} if there exists a weight preserving automorphism of weighted
graphs mapping $\tilde{\D}_1$ to $\tilde{\D}_2$ that fixes the vertices of $\D$. The \emph{number of
  markings} $\nu_{\alpha, \beta}(\D)$ is the number of
$(\alpha,\beta)$-markings of $\D$, up to equivalence. Furthermore, we write
$\mu_\beta(\D)$ for the product $1^{\beta_1} 2^{\beta_2}  \cdots
\mu(\D)$. In the example in
Definition~\ref{def:relativemarking}, the particular choice of
compatible sequences results in a choice of $5$ non-equivalent markings in Step~(4),
thus contributing $5$ to $\nu_{\alpha, \beta}(\D)$. 

The next
theorem follows from \cite[Theorem 3.18]{FM} by a straightforward extension of the
inclusion-exclusion procedure of \cite[Section 1]{FM} that was used
to conclude \cite[Corollary 1.9]{FM} (the non-relative count of reducible curves via
floor diagrams) from \cite[Theorem 1.6]{FM} (the non-relative count of irreducible
curves via floor diagrams).

\begin{theorem}
\label{thm:relcorrespondence}
For any $\delta \ge 1$, the
relative Severi degree $N^\delta_{\alpha, \beta}$ is given by
\begin{displaymath}
N^\delta_{\alpha, \beta} = \sum_\D \mu_\beta(\D) \nu_{\alpha, \beta}(\D),
\end{displaymath}
where the sum is over all (possibly disconnected) floor diagrams $\D$ of degree
$d = \sum_{i \ge 1} i (\alpha_i + \beta_i)$ and cogenus $\delta$.
\end{theorem}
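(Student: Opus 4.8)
The plan is to derive this reducible-curve count from the irreducible version in \cite[Theorem~3.18]{FM} by the same inclusion-exclusion/exponential-formula bookkeeping that \cite{FM} use to pass from \cite[Theorem~1.6]{FM} to \cite[Corollary~1.9]{FM}, adapted to carry the tangency data $(\alpha,\beta)$. First I would recall precisely the statement of \cite[Theorem~3.18]{FM}: the \emph{irreducible} relative Severi degree is $\sum_{\D} \mu_\beta(\D)\,\nu_{\alpha,\beta}(\D)$ with the sum restricted to \emph{connected} floor diagrams of degree $d$ and genus $g$ (equivalently cogenus $\delta$), once one reconciles the partition notation $\lambda=\langle 1^{\alpha_1}2^{\alpha_2}\cdots\rangle$, $\rho=\langle 1^{\beta_1}2^{\beta_2}\cdots\rangle$ of \cite{FM} with the $(\alpha,\beta)$-marking notation of Definition~\ref{def:relativemarking}, and checks that our $\mu_\beta(\D)$ and $\nu_{\alpha,\beta}(\D)$ reproduce their weight and marking count. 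This reconciliation is routine but must be done carefully, because our Step~4 ordering convention on $\alpha$-vertices is a mild repackaging of theirs.

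Next I would set up the combinatorial identity relating the two sides. On the geometric side, a possibly reducible nodal curve with tangency profile $(\alpha,\beta)$ and $\delta$ nodes decomposes into irreducible components; partitioning the point conditions, the $\alpha$-tangencies, the $\beta$-tangencies, and distributing the nodes and the cross-component intersection contributions among the components produces a product over components, and the cogenus formula $\delta(\D)=\sum_j \delta_j+\sum_{j<j'}d_jd_{j'}$ for disconnected diagrams is exactly the combinatorial shadow of "$\delta$ = (sum of nodes of components) + (pairwise intersection numbers)" by B\'ezout. Concretely, I would organize both sides through generating functions: form the exponential generating function (in a variable tracking degree, or more precisely tracking the full data $(\alpha,\beta)$) whose coefficients are the connected counts $\sum_{\D\ \mathrm{conn}}\mu_\beta(\D)\nu_{\alpha,\beta}(\D)$, take its exponential, and identify the resulting coefficients with $\sum_{\D}\mu_\beta(\D)\nu_{\alpha,\beta}(\D)$ summed over \emph{all} floor diagrams. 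The key point is that a (disconnected) floor diagram is literally a disjoint union of connected ones, its $\mu$ multiplies, its $\mu_\beta$-prefactor $1^{\beta_1}2^{\beta_2}\cdots$ multiplies since the $\beta_i$ add over components, and its marking number $\nu_{\alpha,\beta}$ factors as a product over components times the multinomial count of distributing the $\alpha$- and $\beta$-data and the point-labels among components — which is precisely the combinatorial content of the exponential formula. On the geometric side the same exponential formula governs how the degree of the full Severi variety is assembled from the degrees of the generalized Severi varieties of its irreducible strata; this is exactly the inclusion-exclusion of \cite[Section~1]{FM}, now with the extra tangency labels $(\alpha,\beta)$ carried along inertly through every multinomial coefficient.

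I expect the main obstacle to be not the exponential-formula manipulation itself — which is formal once the pieces are named — but rather the careful verification that the decomposition of point conditions, node counts, and \emph{tangency} conditions across irreducible components matches, term by term, the decomposition of a disconnected floor diagram into connected ones: in particular that the cross-term $\sum_{j<j'}d_jd_{j'}$ in the cogenus correctly accounts for the intersection points of distinct components (which do \emph{not} impose tangency or point conditions and are \emph{not} nodes of the individual components but \emph{are} nodes of the union), and that no tangency point is ever a cross-component intersection in the generic configurations counted. Once one checks that \cite[Theorem~3.18]{FM} is stated with the point count $\tfrac{(d+3)d}{2}-\delta-|\alpha|$ adjusted per component in the same way — i.e., that the per-component expected dimensions add up correctly — the inclusion-exclusion goes through verbatim as in the passage from \cite[Theorem~1.6]{FM} to \cite[Corollary~1.9]{FM}, and the theorem follows. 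I would close by remarking that, since Theorem~\ref{thm:relcorrespondence} is invoked only as a black box in the sequel, the level of detail above (citing \cite{FM} for the irreducible case and the inclusion-exclusion) is all that is needed.
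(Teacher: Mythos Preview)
Your proposal is correct and follows exactly the approach the paper takes: the paper's entire justification for this theorem is the single sentence preceding it, stating that it follows from \cite[Theorem~3.18]{FM} by a straightforward extension of the inclusion--exclusion procedure of \cite[Section~1]{FM} used to pass from \cite[Theorem~1.6]{FM} to \cite[Corollary~1.9]{FM}. If anything, your sketch is considerably more detailed than what the paper provides.
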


\section{Relative Decomposition of Floor Diagrams} 
\label{sec:relativenodepolys}

In this section, we introduce a new decomposition of floor diagrams
compatible with tangency conditions, which we use extensively in Sections~\ref{sec:proofs}
and~\ref{sec:relativecoefficients} to prove all our results
stated in Section~\ref{sec:intro}.
This decomposition is a generalization of ideas of S.~Fomin and G.~Mikhalkin
\cite{FM}.
We start out by reviewing their key gadget.

\begin{definition}
\label{def:template}
A \emph{template} $\Gamma$ is a directed graph (possibly with multiple
edges) on vertices
$\{0, \dots, l\}$, where $l\ge 1$, with edge
weights $w(e) \in \ZZ_{>0}$, satisfying:
\begin{enumerate}
\item If $i \to j$ is an edge, then $i <j$.
\item Every edge $i \stackrel{e}{\to} i+1$ has weight $w(e) \ge
  2$. (No ``short edges.'')
\item For each vertex $j$, $1 \le j \le l-1$, there is an edge
  ``covering'' it, i.e., there exists an edge $i \to k$ with $i <j <k$.
\end{enumerate}
\end{definition}

Every template $\Gamma$ comes equipped with some natural numerical
invariants.
Its \emph{length} $l(\Gamma)$ is the number of
vertices minus $1$. The product of squares of the edge weights
is its \emph{multiplicity} $\mu(\Gamma)$. Its \emph{cogenus} $\delta(\Gamma)$
is
\[
\delta(\Gamma) \stackrel{\text{def}}{=} \sum_{\stackrel{e}{i \to j}} \bigg[(j-i) w(e) -1
\bigg].
\]
Thus, every edge of $\Gamma$ contributes by the product of its length
and weight minus $1$ to $\delta(\Gamma)$.

For $1 \le j \le l(\Gamma)$, let $\varkappa_j = \varkappa_j(\Gamma)$
denote the sum of the weights of edges $i \stackrel{e}{\to} k$ with $i
< j \le k$, i.e., $\varkappa_j(\Gamma)$ is the sum of the weights of
all edges of $\Gamma$ from a vertex left of or equal to $j-1$ to a
vertex right of or equal to $j$. We define
\[
k_{\min}(\Gamma) \stackrel{\text{def}}{=} \max_{1 \le j \le l}(\varkappa_j -j +1).
\]
This makes $k_{\min}(\Gamma)$ the smallest positive integer $k$ such
that $\Gamma$ can appear in a floor diagram on $\{1, 2, \dots \}$ as a
subgraph with
left-most vertex $k$.
Figure~\ref{fig:templates} (\cite[Figure 10]{FM}) lists all templates $\Gamma$ with
$\delta(\Gamma) \le 2$.

\begin{figure}[tbp]
\begin{center}
\begin{tabular}{c|c|c|c|c|c|c|c}
$\Gamma$ &
$\delta(\Gamma)$ & $\ell(\Gamma)$ 
& $\mu(\Gamma)$ 
& $\varkappa(\Gamma)$ 
& $k_{\min}(\Gamma)$ & $P_{\Gamma}(k)$ & $s(\Gamma)$
\\
\hline \hline
&&&&&&&\\[-.1in]
\begin{picture}(95,10)(-10,-4)\setlength{\unitlength}{2.5pt}\thicklines
\multiput(0,0)(10,0){2}{\circle{2}}
\put(5,2){\makebox(0,0){$\scriptstyle 2$}}
\Eeee
\end{picture}
& 1 & 1 & 4 & (2) & 2 & $k-1$ & 1
\\[.15in]
\begin{picture}(95,8)(-10,-4)\setlength{\unitlength}{2.5pt}\thicklines
\ooo
\qbezier(0.8,0.6)(10,5)(19.2,0.6)
\end{picture}
& 1 & 2 & 1 & (1,1) & 1 & $2k+1$ & 1
\\
\hline \hline
&&&&&&&\\[-.1in]
\begin{picture}(95,10)(-10,-4)\setlength{\unitlength}{2.5pt}\thicklines
\multiput(0,0)(10,0){2}{\circle{2}}
\put(5,2){\makebox(0,0){$\scriptstyle 3$}}
\Eeee
\end{picture}
& 2 & 1 & 9 & (3) & 3 & $k-2$ & 1
\\[.15in]
\begin{picture}(95,8)(-10,-4)\setlength{\unitlength}{2.5pt}\thicklines
\multiput(0,0)(10,0){2}{\circle{2}}
\put(5,3.5){\makebox(0,0){$\scriptstyle 2$}}
\put(5,-3.5){\makebox(0,0){$\scriptstyle 2$}}
\qbezier(0.8,0.6)(5,2)(9.2,0.6)
\qbezier(0.8,-0.6)(5,-2)(9.2,-0.6)
\end{picture}
& 2 & 1 & 16 & (4) & 4 & $\binom{k-2}{2}$ & 2
\\[.15in]
\begin{picture}(95,8)(-10,-4)\setlength{\unitlength}{2.5pt}\thicklines
\ooo
\qbezier(0.8,0.6)(10,4)(19.2,0.6)
\qbezier(0.8,-0.6)(10,-4)(19.2,-0.6)
\end{picture}
& 2 & 2 & 1 & (2,2) & 2 & $\binom{2k}{2}$ & 0
\\[.15in]
\begin{picture}(95,8)(-10,-4)\setlength{\unitlength}{2.5pt}\thicklines
\ooo
\qbezier(0.8,0.6)(10,4)(19.2,0.6)
\put(5,-2){\makebox(0,0){$\scriptstyle 2$}}
\Eeee
\end{picture}
& 2 & 2 & 4 & (3,1) & 3 & $2k(k-2)$ & 1
\\[.15in]
\begin{picture}(95,8)(-10,-4)\setlength{\unitlength}{2.5pt}\thicklines
\ooo
\qbezier(0.8,0.6)(10,4)(19.2,0.6)
\put(15,-2){\makebox(0,0){$\scriptstyle 2$}}
\eEee
\end{picture}
& 2 & 2 & 4 & (1,3) & 2 & $2k(k-1)$ & 1
\\[.15in]
\begin{picture}(95,8)(-10,-4)\setlength{\unitlength}{2.5pt}\thicklines
\oooo
\qbezier(0.8,0.6)(15,6)(29.2,0.6)
\end{picture}
& 2 & 3 & 1 & (1,1,1) & 1 & $3(k+1)$ & 0
\\[.15in]
\begin{picture}(95,8)(-10,-4)\setlength{\unitlength}{2.5pt}\thicklines
\oooo
\qbezier(0.8,0.6)(10,5)(19.2,0.6)
\qbezier(10.8,0.6)(20,5)(29.2,0.6)
\end{picture}
& 2 & 3 & 1 & (1,2,1) & 1 & $k(4k+5)$ & 0
\\[-.05in]
\end{tabular}
\end{center}
\caption{The templates with $\delta(\Gamma) \le 2$.}
\label{fig:templates}
\end{figure}

We now explain how to decompose a floor diagram $\D$ into a
collection of templates and further building blocks. The decomposition
depends on  tangency sequences $\alpha$ and $\beta$ as well as  
a pair $(\{\alpha^i \},
\{\beta^i \})$ compatible with $\D$ (see Step~1 of Definition~\ref{def:relativemarking}),
where $i$ runs over the vertices of $\D$.

Assume we are given such data $(\D, \{\alpha^i\}, \{\beta^i\})$, and let $d$ be the
degree of $\D$. We first construct two
(infinite) matrices $A$ and $B$: for $i \ge 1$, we define the $i$th
row $a_i$ resp.\ $b_i$ of $A$ resp.\ $B$ to be the sequence
$\alpha^{d-i}$ resp.\ $\beta^{d-i}$. (If $d-i \le 0$, i.e., if $i \ge
d$, we set $a_i = b_i = (0, 0, \dots)$.) This records the sequences
$\alpha^{d-1}$ resp.\ $\beta^{d-1}$ of the second to last vertex of $\D$
in the first row of $A$ resp.\ $B$, the sequences $\alpha^{d-2}$
resp.\ $\beta^{d-2}$ associated with the 
third to last vertex of $\D$ in the second row of $A$ resp.\ $B$, and
so on. 
Notice that we do not record in $A$ and $B$ the sequences $\alpha^d$ and $\beta^d$
associated with the right-most vertex of $\D$. These sequences satisfy
\begin{equation}
\label{eqn:non-negativity}
\alpha^d = \alpha - \sum_{i \ge 1} a_i \quad \text{and} \quad \beta^d = \beta - \sum_{i \ge 1} b_i
\end{equation}
and can thus be recovered from $\alpha$, $\beta$, $A$ and $B$.
Before we continue to describe the decomposition of a floor diagram
into templates, we illustrate the previous construction by an example. 

\begin{example}
\label{ex:alphabetadecomposition}
The pictured pair of sequences $(\{\alpha^i\}, \{ \beta^i \})$ (we
omit to write down zero-sequences), compatible with the floor diagram
$\D$ and $\alpha = (0,1)$, $\beta = (4,1)$,
\vspace{-2mm}
\begin{center}
\begin{picture}(50,64)(70,-40)\setlength{\unitlength}{3pt}\thicklines
\multiput(0,0)(10,0){8}{\circle{2}}
\qbezier(0.8,0.6)(4,5)(10,5)\qbezier(10,5)(16,5)(19.2,0.6)
\put(10,5){\vector(1,0){1}}
\put(11,0){\line(1,0){8}}
\put(15,0){\vector(1,0){1}}
\put(21,0){\line(1,0){8}}
\put(25.5,0){\vector(1,0){1}}
\qbezier(20.8,0.6)(22,3)(25,3)\qbezier(25,3)(28,3)(29.2,0.6)
\put(25.5,3){\vector(1,0){1}}
\qbezier(20.8,-0.6)(22,-3)(25,-3)\qbezier(25,-3)(28,-3)(29.2,-0.6)
\put(25.5,-3){\vector(1,0){1}}
\put(31,0){\line(1,0){8}}
\put(35,2){\makebox(0,0){$3$}}
\put(35,0){\vector(1,0){1}}

\qbezier(40.8,0.6)(42,4)(45,4)\qbezier(45,4)(48,4)(49.2,0.6)
\put(45.5,4){\vector(1,0){1}}
\qbezier(40.8,0.6)(42,1.5)(45,1.5)\qbezier(45,1.5)(48,1.5)(49.2,0.6)
\put(45.5,1.5){\vector(1,0){1}}
\qbezier(40.8,-0.6)(42,-1.5)(45,-1.5)\qbezier(45,-1.5)(48,-1.5)(49.2,-0.6)
\put(45.5,-1.5){\vector(1,0){1}}
\qbezier(40.8,-0.6)(42,-4)(45,-4)\qbezier(45,-4)(48,-4)(49.2,-0.6)
\put(45.5,-4){\vector(1,0){1}}

\put(51,0){\line(1,0){8}}
\put(55,2){\makebox(0,0){$2$}}
\put(55,0){\vector(1,0){1}}


\put(61,0){\line(1,0){8}}
\put(65.5,0){\vector(1,0){1}}
\qbezier(60.8,0.6)(62,2)(65,2)\qbezier(65,2)(68,2)(69.2,0.6)
\put(65.5,2){\vector(1,0){1}}
\qbezier(60.8,-0.6)(62,-2)(65,-2)\qbezier(65,-2)(68,-2)(69.2,-0.6)
\put(65.5,-2){\vector(1,0){1}}

 \put(-7,0){\makebox(0,0){$\D=$}}
 \put(-7,-6){\makebox(0,0){$\alpha^i=$}}
 \put(50,-6){\makebox(0,0){$(0,1)$}}
 \put(-7,-12){\makebox(0,0){$\beta^i=$}}
 \put(30,-12){\makebox(0,0){$(1)$}}
 \put(50,-12){\makebox(0,0){$(1)$}}
 \put(70,-12){\makebox(0,0){$(2,1)$}}
\end{picture}
\end{center}
\vspace{2mm}
determines the matrices 
\begin{center}
\vspace{-4mm}
\begin{displaymath}
\scriptsize
A = 
\begin{bmatrix}
\, \, 0  \, \,& \, \, 0 \, \,& \, \, 0\, \,&\cdots \\
0& 1 & 0 &\cdots\\
0 & 0 & 0 & \cdots \\
0 & 0 & 0 & \cdots \\
0 & 0 & 0 & \cdots \\
\vdots & \vdots & \vdots & \ddots\\
\end{bmatrix}
\quad  \text{\normalsize and }\quad
B = 
\begin{bmatrix}
\, \, 0 \, \,& \, \, 0 \, \,& \, \,  0 \, \,&\cdots \\
1& 0 & 0 &\cdots\\
0 & 0 & 0 & \cdots \\
1 & 0 & 0 & \cdots \\
0 & 0 & 0 & \cdots \\
\vdots & \vdots & \vdots & \ddots\\
\end{bmatrix}.
\end{displaymath}
\end{center}
\end{example}
\vspace{2mm}

Next, we describe how the triple $(\D, A, B)$, in turn, determines a collection
of templates, together with some extra data.
Let $l(A)$ resp.\ $l(B)$ be the largest row indices such
that $A$ resp.\ $B$ have a non-zero entry in this row. We call these numbers
the {\it length} of $A$ and $B$. (The length 
of the
zero-matrix is $0$.)  After we remove all ``short edges'' from $\D$, i.e., all
edges of weight $1$ between consecutive vertices,  the resulting
graph is an ordered collection of templates $(\Gamma_1,
\dots, \Gamma_r)$, listed left to right. Let $k_s$ be the smallest
vertex in $\D$ of each template $\Gamma_s$. Record all pairs
$(\Gamma_s,k_s)$ that satisfy $k_s + l(\Gamma_s) \le d -
\max(l(A),l(B))$, i.e., all templates whose right-most vertex is left
of or equal to every vertex $i$ of $\D$ for which $A$ or $B$ have row
$d-i$ non-zero.
Record the remaining templates, together with all
vertices $i$, for $i \ge \max(l(A), l(B))$, in \emph{one} graph
$\Lambda$ on vertices $0, \dots, l$ by
shifting the vertex labels by $d - l$. See Example
\ref{ex:relativedecomposition} for an example of this
decomposition.

\begin{example}
\label{ex:relativedecomposition}
The decomposition of the floor diagram $\D$ of
Example~\ref{ex:alphabetadecomposition} subject to the
matrices $A$ and $B$ of Example~\ref{ex:alphabetadecomposition} is
pictured below. Notice that $\max(l(A), l(B)) = 4$, thus $\Lambda$
contains both the edges of weight $2$ and $3$.
\begin{center}
\begin{picture}(50,80)(50,-85)\setlength{\unitlength}{3pt}\thicklines
\multiput(0,0)(10,0){8}{\circle{2}}
\qbezier(0.8,0.6)(4,5)(10,5)\qbezier(10,5)(16,5)(19.2,0.6)
\put(10,5){\vector(1,0){1}}
\put(11,0){\line(1,0){8}}
\put(15,0){\vector(1,0){1}}
\put(21,0){\line(1,0){8}}
\put(25.5,0){\vector(1,0){1}}
\qbezier(20.8,0.6)(22,3)(25,3)\qbezier(25,3)(28,3)(29.2,0.6)
\put(25.5,3){\vector(1,0){1}}
\qbezier(20.8,-0.6)(22,-3)(25,-3)\qbezier(25,-3)(28,-3)(29.2,-0.6)
\put(25.5,-3){\vector(1,0){1}}
\put(31,0){\line(1,0){8}}
\put(35,2){\makebox(0,0){$3$}}
\put(35,0){\vector(1,0){1}}

\qbezier(40.8,0.6)(42,4)(45,4)\qbezier(45,4)(48,4)(49.2,0.6)
\put(45.5,4){\vector(1,0){1}}
\qbezier(40.8,0.6)(42,1.5)(45,1.5)\qbezier(45,1.5)(48,1.5)(49.2,0.6)
\put(45.5,1.5){\vector(1,0){1}}
\qbezier(40.8,-0.6)(42,-1.5)(45,-1.5)\qbezier(45,-1.5)(48,-1.5)(49.2,-0.6)
\put(45.5,-1.5){\vector(1,0){1}}
\qbezier(40.8,-0.6)(42,-4)(45,-4)\qbezier(45,-4)(48,-4)(49.2,-0.6)
\put(45.5,-4){\vector(1,0){1}}

\put(51,0){\line(1,0){8}}
\put(55,2){\makebox(0,0){$2$}}
\put(55,0){\vector(1,0){1}}


\put(61,0){\line(1,0){8}}
\put(65.5,0){\vector(1,0){1}}
\qbezier(60.8,0.6)(62,2)(65,2)\qbezier(65,2)(68,2)(69.2,0.6)
\put(65.5,2){\vector(1,0){1}}
\qbezier(60.8,-0.6)(62,-2)(65,-2)\qbezier(65,-2)(68,-2)(69.2,-0.6)
\put(65.5,-2){\vector(1,0){1}}

\end{picture}
\begin{picture}(50,113)(103,-20)\setlength{\unitlength}{3pt}\thicklines
\put(35,12){\Huge $\downarrow$}
\multiput(0,0)(10,0){8}{\circle{2}}
\qbezier(0.8,0.6)(4,5)(10,5)\qbezier(10,5)(16,5)(19.2,0.6)
\put(31,0){\line(1,0){8}}
\put(35,2){\makebox(0,0){$3$}}
\put(35,0){\vector(1,0){1}}
\put(51,0){\line(1,0){8}}
\put(55,2){\makebox(0,0){$2$}}
\put(55,0){\vector(1,0){1}}


\put(10,-5){\makebox(0,0){$(\Gamma_1, 1)$}}
\put(50,-8){\makebox(0,0){$\Lambda$}}
\put(25,-5){\dashbox{2}(50,12) }
\end{picture}
\end{center}
\end{example}

The triple $(\Lambda, A, B)$ in the example above is an instance of an
``extended template,'' a new building block allowing relative
decomposition.

\begin{definition}
\label{def:extendedtemplate}
A tuple $(\Lambda, A, B)$ is an \emph{extended template} of
\emph{length} $l = l(\Lambda) = l(\Lambda, A, B)$ if $\Lambda$ is a
directed graph (possibly with multiple edges) on vertices $\{0, \dots, l\}$, where $l \ge 0$, with edge weights $w(e) \in
\ZZ_{>0}$, satisfying:
\begin{enumerate}
\item If $i \to j$ is an edge then $i <j$.
\item Every edge $i \stackrel{e}{\to} i+1$ has weight $w(e) \ge
  2$. (No ``short edges.'')
\end{enumerate}
Moreover, $A$ and $B$ are (infinite) matrices with
non-negative integral entries and finite support, and we write $l(A)$
and $l(B)$ for the respective largest row indices of $A$ and $B$ of a
non-zero entry. Additionally, we demand $ l(\Lambda) \ge
\max(\l(A), l(B))$, and that, for each $1 \le j \le l - \max(\l(A),
l(B))$, there is an edge $i \to k$ of $\Lambda$ with $i < j < k$.
\end{definition}

The last condition in the definition is a kind of ``connectedness''
property: it says that, for each vertex $j \ge 1$ of $\Lambda$
at least at distance $\max(\l(A),l(B))$ from the right-most vertex
$l$ of $\Lambda$, there must be an edge of $\Lambda$ that passes it. This
condition implies that once $l(Lambda) \ge 1$, the matrices $A$ and $B$
cannot both be the zero-matrix.

From a floor diagram $\D$, sequences $\alpha$ and $\beta$, and a
compatible pair $(\{ \alpha^i\}, \{ \beta^i\})$, we have constructed
two successive maps:
\begin{equation}
\label{eqn:relativedecomposition}
\big( \D, (\alpha^i), (\beta^i) \big) \longrightarrow \big( \D, A, B \big) \longrightarrow \big( \{(\Gamma_s,
k_s)\}, \Lambda, A, B \big).
\end{equation}
The two maps are illustrated in
Examples~\ref{ex:alphabetadecomposition}
and~\ref{ex:relativedecomposition}, respectively.

We now analyze when these maps are reversible.
Fix a collection $\{(\Gamma_s, k_s)\}$ of templates and positive
integers, where $1 \le s \le m$, an extended template
$(\Lambda, A, B)$, and a positive integer $d$ (which will be the
degree of $\D$). Then the second map of
(\ref{eqn:relativedecomposition}) is reversible if and only if
\begin{equation}
\label{eqn:inequalities}
\left\{
     \begin{array}{c}
       \begin{array}{rll}
k_i &\ge \, \, \, k_{\min}(\Gamma_i) \, &\text{ for } 1 \le i \le m, \\
k_{i+1}  & \ge \, \, \, k_i + l(\Gamma_i) \, & \text{ for } 1 \le i \le m-1, \\
k_m + l(\Gamma_m) & \le \, \, \ d - l(\Lambda). &
\end{array}
     \end{array}
   \right.
\end{equation}
In other words, the templates cannot appear too far to the left, and
the graphs $\Gamma_1, \dots, \Gamma_m$ and $\Lambda$ cannot overlap.

\begin{figure}[tbp]
\begin{center}
\begin{tabular}{c|c|c|c|c|c|c|c}
$(\Lambda, A, B)$ &
$\delta$ & $l$ 
& $\mu$ 
& $\varkappa$ 
& $\!\!d_{\min}\!\!$ & $q_{(\Lambda, A, B)}(\alpha; \beta)$ of Lemma~\ref{lem:polyoffinaltemplate}& $s$
\\
\hline \hline
&&&&&&&\\[-.1in]
\begin{picture}(60,10)(-0,-4)\setlength{\unitlength}{2.5pt}\thicklines
\o
\end{picture}
\scriptsize
$\begin{bmatrix}
0& 0 \\
0& 0\\
\end{bmatrix}
\begin{bmatrix}
0& 0 \\
0& 0\\
\end{bmatrix}$
& 0 & 0 & 1 & () & 1 & $1$ & 0
\\ [.1in]
\hline \hline
&&&&&&&\\ [-.1in]
\begin{picture}(60,10)(-0,-4)\setlength{\unitlength}{2.5pt}\thicklines
\oo
\end{picture}
\scriptsize
$\begin{bmatrix}
1& 0 \\
0& 0\\
\end{bmatrix}
\begin{bmatrix}
0& 0 \\
0& 0\\
\end{bmatrix}$
& 1 & 1 & 1 & (0) & 1 & $1$ & 0
\\
\begin{picture}(60,15)(-0,-4)\setlength{\unitlength}{2.5pt}\thicklines
\oo
\end{picture}
\scriptsize
$\begin{bmatrix}
0& 0 \\
0& 0\\
\end{bmatrix}
\begin{bmatrix}
1& 0 \\
0& 0\\
\end{bmatrix}$
& 1 & 1 & 1 & (0) & 1 & $\beta_1 (d + |\beta| -1)$ & 0 \\  [.1in]
\hline \hline
&&&&&&&\\[-.1in]
\begin{picture}(60,15)(-0,-4)\setlength{\unitlength}{2.5pt}\thicklines
\multiput(0,0)(10,0){2}{\circle{2}}
\put(5,2){\makebox(0,0){$\scriptstyle 2$}}
\Eeee
\end{picture}
\scriptsize
$\begin{bmatrix}
1& 0 \\
0& 0\\
\end{bmatrix}
\begin{bmatrix}
0& 0 \\
0& 0\\
\end{bmatrix}$
& 2 & 1 & 4 & (2) & 4 & $(d-3)$ & 1 \\
\begin{picture}(60,15)(-0,-4)\setlength{\unitlength}{2.5pt}\thicklines
\multiput(0,0)(10,0){2}{\circle{2}}
\put(5,2){\makebox(0,0){$\scriptstyle 2$}}
\Eeee
\end{picture}
\scriptsize
$\begin{bmatrix}
0& 0 \\
0& 0\\
\end{bmatrix}
\begin{bmatrix}
1& 0 \\
0& 0\\
\end{bmatrix}$
& 2 & 1 & 4 & (2) & 4 & $\beta_1 (d-3) (d + |\beta|-2)$ & 1 \\
\begin{picture}(60,15)(-0,-4)\setlength{\unitlength}{2.5pt}\thicklines
\ooo
\qbezier(0.8,0.6)(10,4)(19.2,0.6)
\end{picture}
\scriptsize
$\begin{bmatrix}
1& 0 \\
0& 0\\
\end{bmatrix}
\begin{bmatrix}
0& 0 \\
0& 0\\
\end{bmatrix}$
& 2 & 2 & 1 & \!\!(1,1) \!\!& 3 & $2 (d-2)$ & 0 \\
\begin{picture}(60,15)(-0,-4)\setlength{\unitlength}{2.5pt}\thicklines
\ooo
\qbezier(0.8,0.6)(10,4)(19.2,0.6)
\end{picture}
\scriptsize
$\begin{bmatrix}
0& 0 \\
0& 0\\
\end{bmatrix}
\begin{bmatrix}
1& 0 \\
0& 0\\
\end{bmatrix}$
& 2 & 2 & 1 & \!\! (1,1) \!\! & 3 & $\beta_1 (d-2) (2d + 2 |\beta|-3)$ & 0 \\
\begin{picture}(60,15)(-0,-4)\setlength{\unitlength}{2.5pt}\thicklines
\oo
\end{picture}
\scriptsize
$\begin{bmatrix}
2& 0 \\
0& 0\\
\end{bmatrix}
\begin{bmatrix}
0& 0 \\
0& 0\\
\end{bmatrix}$
& 2 & 1 & 1 & (0) & 3 & $1$ & 0 \\
\begin{picture}(60,15)(-0,-4)\setlength{\unitlength}{2.5pt}\thicklines
\oo
\end{picture}
\scriptsize
$\begin{bmatrix}
1& 0 \\
0& 0\\
\end{bmatrix}
\begin{bmatrix}
1& 0 \\
0& 0\\
\end{bmatrix}$
& 2 & 1 & 1 & (0) & 3 & $ \beta_1 (d + |\beta|-2)$ & 0 \\
\begin{picture}(60,15)(-0,-4)\setlength{\unitlength}{2.5pt}\thicklines
\oo
\end{picture}
\scriptsize
$\begin{bmatrix}
0& 0 \\
0& 0\\
\end{bmatrix}
\begin{bmatrix}
2& 0 \\
0& 0\\
\end{bmatrix}$
& 2 & 1 & 1 & (0) & 3 & {\scriptsize $\binom{\beta_1}{2} (d^2 +2 d
  |\beta| + |\beta|^2 -5 d -  5 |\beta|+6)$} & 0 \\
\begin{picture}(60,15)(-0,-4)\setlength{\unitlength}{2.5pt}\thicklines
\oo
\end{picture}
\scriptsize
$\begin{bmatrix}
0& 1 \\
0& 0\\
\end{bmatrix}
\begin{bmatrix}
0& 0 \\
0& 0\\
\end{bmatrix}$
& 2 & 1 & 1 & (0) & 3 & $1$ & 0 \\
\begin{picture}(60,15)(-0,-4)\setlength{\unitlength}{2.5pt}\thicklines
\oo
\end{picture}
\scriptsize
$\begin{bmatrix}
0& 0 \\
0& 0\\
\end{bmatrix}
\begin{bmatrix}
0& 1 \\
0& 0\\
\end{bmatrix}$
& 2 & 1 & 1 & (0) & 3 & $\beta_2 (|\beta|-1) (d + |\beta|-2)$ & 0 \\
\begin{picture}(60,15)(-0,-4)\setlength{\unitlength}{2.5pt}\thicklines
\ooo
\end{picture}
\scriptsize
$\begin{bmatrix}
0& 0 \\
1& 0\\
\end{bmatrix}
\begin{bmatrix}
0& 0 \\
0& 0\\
\end{bmatrix}$
& 2 & 3 & 1 &\!\! (0,0) \!\!& 3 & $1$ & 0 \\
\begin{picture}(60,15)(-0,-4)\setlength{\unitlength}{2.5pt}\thicklines
\ooo
\end{picture}
\scriptsize
$\begin{bmatrix}
0& 0 \\
0& 0\\
\end{bmatrix}
\begin{bmatrix}
0& 0 \\
1& 0\\
\end{bmatrix}$
& 2 & 3 & 1 & \!\! (0,0) \!\! & 3 & $\beta_1 (|\beta|-1) (2 d + |\beta|-3)$ & 0
\\[-.05in]
\end{tabular}
\end{center}
\caption{The extended templates with $\delta(\Lambda, A, B) \le 2$.}
\label{fig:extendedtemplates}
\end{figure}

Whether the first map is reversible depends on the sequences $\alpha$
and $\beta$. Recall that, given a compatible pair $(\{ \alpha^i \},
\{\beta^i\})$, we only recorded in the matrices $A$ and $B$ the pairs
$(\alpha^i, \beta^i)$ for $i < d$. The pair $(\alpha^d, \beta^d)$, in turn, is
determined by (\ref{eqn:non-negativity}). As the entries of $\alpha^d$
and $\beta^d$ represent numbers of edges we need to add at vertex $d$ to
obtain an $(\alpha, \beta)$-marking of $\D$ (see Step~2 of
Definition~\ref{def:relativemarking}), all entries of $\alpha^d$ and
$\beta^d$ need to be non-negative. Thus, the first map in
(\ref{eqn:relativedecomposition}) is reversible if and only if component-wise
\begin{equation}
\label{eqn:inequalities2}
\sum_{i \ge 1} a_i \le \alpha \quad \quad \text{ and } \quad \quad \sum_{i \ge 1} b_i \le \beta.
\end{equation}
It follows that, for large enough $\alpha$ and~$\beta$, the decomposition is
independent of $\alpha$ and $\beta$, given the part of the compatible
pair $(\{ \alpha^i\}, \{ \beta^i\})$, for $1 \le i <d$, away from the
right-most vertex of $\D$. It is exactly
this part that is recorded by the matrices $A$ and~$B$. Thus, we can
think of $A$ and $B$ as sort of ``placeholders'' for $\alpha^i$ and
$\beta^i$, in the case where $\alpha$ and $\beta$
varies.

\begin{example}
\label{ex:templatecomposition}
We now illustrate the reverse direction of
(\ref{eqn:relativedecomposition}), that is, how to build up a floor
diagram (and a compatible pair of sequences) from the pieces of the
decomposition. Let $\Gamma_1$, $\Gamma_2$ and $\Lambda$ to be the three graphs
and $A$ and $B$ be the two matrices below.
\begin{picture}(100,50)(-19,-20)\setlength{\unitlength}{3pt}\thicklines

\multiput(0,0)(10,0){2}{\circle{2}}
\put(1,0){\line(1,0){8}}
\put(5,2){\makebox(0,0){$2$}}
\put(5,0){\vector(1,0){1}}
\put(5,-4){\makebox(0,0){$\Gamma_1$}}

\put(0,0){
\multiput(30,0)(10,0){3}{\circle{2}}
\qbezier(30.8,0.6)(34,5)(40,5)
\qbezier(40,5)(46,5)(49.2,0.6)
\put(41,0){\line(1,0){8}}
\put(45,2){\makebox(0,0){$3$}}
\put(45,0){\vector(1,0){1}}
\put(40,-4){\makebox(0,0){$\Gamma_2$}}
} 

\put(0,0){
\multiput(70,0)(10,0){4}{\circle{2}}
\qbezier(80.8,0.6)(84,5)(90,5)
\qbezier(90,5)(96,5)(99.2,0.6)
\put(90,7){\makebox(0,0){$2$}}
\put(81,0){\line(1,0){8}}
\put(85,2){\makebox(0,0){$2$}}
\put(85,0){\vector(1,0){1}}
\put(85,-4){\makebox(0,0){$\Lambda$}}
} 
\end{picture}

\begin{center}
\vspace{-8mm}
\begin{displaymath}
\footnotesize
A = 
\begin{bmatrix}
\, \, 1 \, \,& \, \, 1 \, \,& \, \,  0 \, \,&\cdots \\ 
1 & 0 & 0 & \cdots \\
0 & 0 & 0 & \cdots \\
\vdots & \vdots & \vdots & \ddots\\
\end{bmatrix}
\quad \quad
B = 
\begin{bmatrix}
\, \, 0 \, \,& \, \, 0 \, \,& \, \,  0 \, \,&\cdots \\
1 & 0 & 0 & \cdots \\
0 & 1 & 0 & \cdots \\
0 & 0 & 0 & \cdots \\
\vdots & \vdots & \vdots & \ddots\\
\end{bmatrix}
\end{displaymath}
\vspace{-3mm}
\end{center}
Notice that, without the left-most vertex in $\Lambda$, the triple
$(\Lambda, A, B)$ is not an extended template (otherwise, 
$l(\Lambda) = 2$ although $l(B) = 3$, as $B$ has non-trivial third
row).

Let $\alpha$ and $\beta$ be sufficiently large tangency sequences satisfying
(\ref{eqn:inequalities2}).
The degree of
the floor diagram $\D$ we want to construct is $d = \sum_i i(\alpha_i + \beta_i)$ (following
B\'ezout's Theorem). We need to choose the
``positions'' $k_1$ and $k_2$ of the templates $\Gamma_1$ and
$\Gamma_2$ in accordance with (\ref{eqn:inequalities}). An invalid choice
is
$k_1 = 1$, as it violates $k_1 \ge k_{\text{min}}(\Gamma_1) = 2$. This
is reflecting the fact that the
divergence condition of floor diagrams forbids weight-$2$ edges
adjacent to the first vertex. 

A valid choice, however, is $k_1 = 2$ and $k_2=3$, as for $d$ large enough, $k_1$ and $k_2$
satisfy (\ref{eqn:inequalities2}). From $A$ and $B$ we can directly read off
${\alpha^i}$ and $\beta^i$ for $i<d$ and determine the
floor diagram $\D$: between each pair of adjacent vertices $i$ and
$i+1$, we need to add sufficiently many edges of
length $1$ and weight $1$ (the ``short edges''), so that, after
adding the $\alpha$- and $\beta$-edges of a marking of $\D$, the total
weight of the edges from vertices left of or equal to $i$ to vertices
right of or equal to $i+1$ is
$i$. This makes the divergence (i.e., ``outflow minus inflow'') of
each vertex of $\D$
in the marked floor diagram equal $1$. For a formula of the necessary
number of short edges see (\ref{eqn:wls}) and
(\ref{eqn:numberofshortedges}) below. To illustrate, if $(\D,
\{\alpha^i\}, \{ \beta^i \})$ is
\begin{center}
\begin{picture}(270,95)(5,-73)\setlength{\unitlength}{3pt}\thicklines
\multiput(0,0)(10,0){6}{\circle{2}}
\put(1,0){\line(1,0){8}}
 \put(6,0){\vector(1,0){1}}
 
\put(11,0){\line(1,0){8}}
 \put(15,3){\makebox(0,0){$2$}}
 \put(16,0){\vector(1,0){1}}
 \put(10,-3){\makebox(0,0){\small $k_1\! = \!2$}}

 \qbezier(20.8,0.6)(22,1.5)(25,1.5)\qbezier(25,1.5)(28,1.5)(29.2,0.6)
 \put(26,1.5){\vector(1,0){1}}

 \qbezier(20.8,-0.6)(22,-1.5)(25,-1.5)\qbezier(25,-1.5)(28,-1.5)(29.2,-0.6)
 \put(26,-1.5){\vector(1,0){1}}

 \qbezier(20.8,0.6)(24,5)(30,5)\qbezier(30,5)(36,5)(39.2,0.6)
 \put(31,5){\vector(1,0){1}}

 \put(31,0){\line(1,0){8}}
 \put(34,2.5){\makebox(0,0){$3$}}
 \put(36,0){\vector(1,0){1}}

 \put(20,-3){\makebox(0,0){\small $k_2\! = \!3$}}

\put(41,0){\line(1,0){8}}
 \put(46,0){\vector(1,0){1}}
 \qbezier(40.8,0.6)(42,1.5)(45,1.5)\qbezier(45,1.5)(48,1.5)(49.2,0.6)
 \put(46,1.5){\vector(1,0){1}}
 \qbezier(40.8,-0.6)(42,-1.5)(45,-1.5)\qbezier(45,-1.5)(48,-1.5)(49.2,-0.6)
 \put(46,-1.5){\vector(1,0){1}}
 \qbezier(40.8,0.6)(42,3)(45,3.5)\qbezier(45,3.5)(48,3.5)(49.2,0.6)
 \put(46,3.5){\vector(1,0){1}}
 \qbezier(40.8,-0.6)(42,-3)(45,-3.5)\qbezier(45,-3.5)(48,-3.5)(49.2,-0.6)
 \put(46,-3.5){\vector(1,0){1}}

 \qbezier(50.8,0.6)(52,2)(55,2)
 \qbezier(50.8,0.6)(52,3.5)(55,3.5)
 \qbezier(51,0.0)(52,0.5)(55,0.5)
 \qbezier(50.8,-0.6)(52,-2)(55,-2)
 \qbezier(50.8,-0.6)(52,-3.5)(55,-3.5)
 \qbezier(51,-0.0)(52,-0.5)(55,-0.5)
 \put(60,0){\makebox(0,0){{\huge $\dots$}}}

\multiput(70,0)(10,0){4}{\circle{2}}

\qbezier(65,2.5)(68,2.5)(69.2,0.6)
\qbezier(65,-2.5)(68,-2.5)(69.2,-0.6)
 \put(66,1.3){\makebox(0,0){\Large $\vdots$}}
 \put(66,-6){\makebox(0,0){\footnotesize $
 \begin{array}{c}
d-4 \\
\text{many}
\end{array}
$}}

\qbezier(70.8,0.6)(72,2.5)(75,2.5)\qbezier(75,2.5)(78,2.5)(79.2,0.6)
\qbezier(70.8,-0.6)(72,-2.5)(75,-2.5)\qbezier(75,-2.5)(78,-2.5)(79.2,-0.6)
 \put(75,1.3){\makebox(0,0){\Large $\vdots$}}
 \put(75,-6){\makebox(0,0){\footnotesize $
 \begin{array}{c}
d-5 \\
\text{many}
\end{array}
$}}

\qbezier(80.8,-0.6)(82,-1)(85,-1)\qbezier(85,-1)(88,-1)(89.2,-0.6)
\qbezier(80.8,-0.6)(82,-5)(85,-5)\qbezier(85,-5)(88,-5)(89.2,-0.6)
\put(85,-1.9){\makebox(0,0){\Large $\vdots$}}
 \put(85,-9){\makebox(0,0){\footnotesize $
 \begin{array}{c}
d-9 \\
\text{many}
\end{array}
$}}

\qbezier(90.8,0.6)(92,2.5)(95,2.5)\qbezier(95,2.5)(98,2.5)(99.2,0.6)
\qbezier(90.8,-0.6)(92,-2.5)(95,-2.5)\qbezier(95,-2.5)(98,-2.5)(99.2,-0.6)
 \put(95,1.3){\makebox(0,0){\Large $\vdots$}}
 \put(95,-6){\makebox(0,0){\footnotesize $
 \begin{array}{c}
d-9 \\
\text{many}
\end{array}
$}}

\put(81,0){\line(1,0){8}}
 \put(86,2.5){\makebox(0,0){$2$}}
 \put(86,0){\vector(1,0){1}}

 \qbezier(80.8,0.6)(84,5)(90,5)\qbezier(90,5)(96,5)(99.2,0.6)
 \put(91,5){\vector(1,0){1}}

 \put(-5,-3){\makebox(0,0){$\D=$}}
 \put(-5,-17){\makebox(0,0){$\alpha^i=$}}
 \put(80,-17){\makebox(0,0){$(1)$}}
 \put(90,-17){\makebox(0,0){$(1,1)$}}
 \put(100,-17){\makebox(0,0){$\alpha^d$}}
  \put(-5,-23){\makebox(0,0){$\beta^i=$}}
  \put(70,-23){\makebox(0,0){$(0,1)$}}
  \put(80,-23){\makebox(0,0){$(1)$}}
 \put(100,-23){\makebox(0,0){$\beta^d$}}
\end{picture}
\end{center}
we need to add $d-5$ short edges between the third-last and
second-last vertex of~$\D$: each marking $\tilde{\D}$ of $\D$ has one
$\beta$-edge of weight $2$ emerging from the third-last vertex (as
$\beta^{d-3} = (0,1)$), and we want the total weight of edges of $\tilde{\D}$ between vertices left of or equal to vertex $d-3$ and right of or
equal to vertex $d-2$ to equal $d-3$.

The sequences $\alpha^d$ and $\beta^d$ of the right-most vertex of
$\D$ are functions of the tangency sequences $\alpha$ and $\beta$. By (\ref{eqn:non-negativity}), we have $\alpha^d = (\alpha_1-2,
\alpha_2-1, \alpha_3, \alpha_4, \dots)$
and $\beta^d = (\beta_1, \beta_2 - 1, \beta_3 - 1, \beta_4,
\dots)$. Thus, the inverse map of (\ref{eqn:relativedecomposition}) is
defined provided $\alpha_1 \ge 2$, $\alpha_2, \beta_2, \beta_3 \ge 1$, and
$\beta_1, \alpha_3, \alpha_i, \beta_i \ge 0$, for $i \ge 4$. This concludes the example.
\end{example}

The \emph{cogenus} $\delta(\Lambda, A, B)$ of an extended template $(\Lambda,
A, B)$ is
the sum of the cogenera $\delta(\Lambda)$, $\delta(A)$ and $\delta(B)$, where
\[
\delta(\Lambda) \stackrel{\text{def}}{=} \sum_{\stackrel{e}{i \to j}} \Big((j-i) w(e) -1
\Big)) \quad \text{ and } \quad \delta(A) \stackrel{\text{def}}{=} \sum_{i, j \ge 1} i \cdot
j \cdot a_{i,j},
\]
and similarly for $B$.
Figure~\ref{fig:extendedtemplates} shows all extended templates with
cogenus at most $2$. This list can be obtained by first considering
all pairs of matrices $(A, B)$ with $\delta(A) + \delta(B) \le 2$, and
then possibly adding weighted edges such that the last (``connectedness'') condition in
Definition~\ref{def:extendedtemplate} is still satisfied.
The definition of $\delta(\Lambda, A, B)$ is such that the correspondence
(\ref{eqn:relativedecomposition}) is cogenus-preserving, in the
following sense.

\begin{proposition}
Fix two tangency sequences $\alpha$ and $\beta$. Let $\D$ be a floor
diagram of degree $d = \sum_{i \ge 1} i(\alpha_i + \beta_i)$. Then we
have, for each pair $(\{
\alpha^i\}, \{ \beta^i\})$ compatible with $\D$ and $(\alpha, \beta)$,
\begin{displaymath}
\delta(\D) = \left( \sum_{i = 1}^m \delta(\Gamma_i) \right) + \delta(\Lambda) +
\delta(A) + \delta(B),
\end{displaymath}
where $\Gamma_1, \dots, \Gamma_m$, $\Lambda$, $A$ and $B$ are defined
via (\ref{eqn:relativedecomposition}).
\end{proposition}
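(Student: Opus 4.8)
The plan is to reduce everything to a single ``master formula'': for \emph{any} floor diagram $\D$ of degree $d$,
\[
\delta(\D) = \binom{d}{2} - \# E(\D),
\]
where $\# E(\D)$ is the number of edges of $\D$ counted with multiplicity. When $\D$ is connected of genus $g$ this is immediate from $\# E(\D) = g + d - 1$ (definition of genus), $\delta(\D) = \binom{d-1}{2} - g$, and $\binom{d}{2} - \binom{d-1}{2} = d - 1$. When $\D$ is disconnected with components of degrees $d_1, d_2, \dots$, one applies the connected case componentwise together with the elementary identities $\binom{d}{2} = \sum_j \binom{d_j}{2} + \sum_{j<j'} d_j d_{j'}$ and $\binom{d_j}{2} - \binom{d_j-1}{2} = d_j - 1$; the term $\sum_{j<j'} d_j d_{j'}$ in the definition of $\delta(\D)$ is precisely what is needed for the formula to persist.

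Next I would rewrite the right-hand side of the asserted equality as a quantity depending on $\D$ alone. A short edge $i \to i+1$ of weight $1$ contributes $(i{+}1{-}i)\cdot 1 - 1 = 0$ to the ``template cogenus'' $\sum_{e:\, i\to j}[(j-i)w(e)-1]$, and the decomposition (\ref{eqn:relativedecomposition}) distributes the non-short edges of $\D$, without loss and without repetition, among $\Gamma_1, \dots, \Gamma_m$ and $\Lambda$, preserving each edge length $j-i$ (the two relabelings being translations). Hence
\[
\sum_{s=1}^m \delta(\Gamma_s) + \delta(\Lambda) \;=\; \sum_{\substack{\text{edges }e\\ i\to j\text{ of }\D}}\big[(j-i)\,w(e) - 1\big] \;=:\; E(\D).
\]
Writing $(j-i)w(e) = \sum_{p=i}^{j-1} w(e)$ and interchanging summations, $\sum_e (j-i)w(e)$ equals $\sum_{p=1}^{d-1}$ of the total edge weight of $\D$ crossing the gap after vertex $p$, which by telescoping the divergences (exactly as for $\varkappa_j(\Gamma)$ in the definition of $k_{\min}$) equals $\sum_{v=1}^{p}\text{div}(v)$; collecting by $v$ gives $\sum_e(j-i)w(e) = \sum_{v=1}^{d-1}(d-v)\,\text{div}(v)$, hence $E(\D) = \sum_{v=1}^{d-1}(d-v)\,\text{div}(v) - \# E(\D)$.

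For the matrix part, recall that the $i$-th rows of $A$ and $B$ are $\alpha^{d-i}$ and $\beta^{d-i}$. Substituting $v = d-i$ and invoking the compatibility relation $\sum_{j\ge 1} j(\alpha_j^v + \beta_j^v) = 1 - \text{div}(v)$ from Step~1 of Definition~\ref{def:relativemarking},
\[
\delta(A)+\delta(B) = \sum_{i,j\ge 1} i\,j\,(a_{i,j}+b_{i,j}) = \sum_{v=1}^{d-1}(d-v)\sum_{j\ge 1} j\,(\alpha_j^v+\beta_j^v) = \sum_{v=1}^{d-1}(d-v)\big(1-\text{div}(v)\big).
\]
Adding this to the formula for $E(\D)$, the $\sum_v (d-v)\,\text{div}(v)$ terms cancel, and since $\sum_{v=1}^{d-1}(d-v) = \binom{d}{2}$ the master formula gives
\[
\sum_{s=1}^m \delta(\Gamma_s) + \delta(\Lambda) + \delta(A) + \delta(B) = \binom{d}{2} - \# E(\D) = \delta(\D);
\]
in particular the left-hand side is automatically independent of the chosen compatible pair, as it must be.

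The routine computations above are harmless; the one point needing genuine care — and what I expect to be the main obstacle — is the combinatorial bookkeeping asserted in the second paragraph. One must check against the precise recipe for $\Lambda$ and the thresholds involving $\max(l(A), l(B))$ that in (\ref{eqn:relativedecomposition}) every non-short edge of $\D$ lands in exactly one of $\Gamma_1, \dots, \Gamma_m$, $\Lambda$, that the vertices of $\D$ absorbed into $\Lambda$ carry no edges beyond those of the ``late'' templates (short edges having been deleted), and that the relabeling by $d-l$ and the within-template relabelings are honest translations, so that no edge length $j-i$, hence no per-edge cogenus contribution, is disturbed. Granting this, the rest is the chain of identities above together with the elementary genus computation.
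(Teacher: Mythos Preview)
Your argument is correct and takes a genuinely different route from the paper. The paper proceeds in two stages: for connected $\D$ it degenerates $\D$ to the unique cogenus-$0$ floor diagram $\D_0$ one edge (or one matrix entry) at a time, checking that each elementary move changes the genus by exactly the corresponding term in $\delta(\Gamma_s)$, $\delta(\Lambda)$, $\delta(A)$, or $\delta(B)$; for disconnected $\D$ it invokes the correspondence with tropical plane curves through a horizontally stretched configuration and the tropical B\'ezout theorem to account for the cross term $\sum_{j<j'} d_j d_{j'}$. Your master formula $\delta(\D) = \binom{d}{2} - \#E(\D)$ sidesteps both maneuvers: it handles the disconnected case by a one-line binomial identity rather than tropical intersection theory, and replaces the inductive degeneration by a direct telescoping of divergences. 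The paper's approach has the virtue of making the geometric meaning of each cogenus contribution visible (edges of $\Gamma$ and $\Lambda$ become tropical nodes of the first kind, matrix entries become crossings of $\alpha$- and $\beta$-edges), but yours is shorter, self-contained, and uniform across the connected and disconnected cases. The bookkeeping worry you flag is harmless: by construction the non-short edges of $\D$ are partitioned among $\Gamma_1,\dots,\Gamma_m,\Lambda$, and both the template shifts $k_s$ and the $\Lambda$-shift by $d-l$ are translations, so each $(j-i)w(e)$ is preserved.
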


\begin{proof}
First, assume that $\D$ is connected with genus $g(\D)$. Then, by definition, $\delta(\D) =
\tfrac{(d-1)(d-2)}{2}-g(\D)$ is the ``genus-deficiency'' of
$\D$, with respect to the unique template $\D_0$ of degree~$d$ and
genus $\tfrac{(d-1)(d-2)}{2}$ (so $\delta(\D_0) =
0$). Specifically, $\D_0$ has $i$ edges of weight $1$ between vertices
$i$ and $i+1$, for $1 \le i < d$, and no other edges. Let $\Gamma_1,
\dots, \Gamma_m, \Lambda, A$ and $B$ be the decomposition data of $\D$
according to (\ref{eqn:relativedecomposition}). We will describe a
degeneration of $\D$ to $\D_0$ which, at each stage, is cogenus-preserving.

If $\Lambda$ has an edge, say $i \stackrel{e}{\to} j$, let $\Lambda'$ be
obtained from $\Lambda$ by removing the edge $e$. The floor diagram
$\D'$ corresponding to the altered data, with $\Lambda'$ instead of
$\Lambda$, differs from $\D$ by $\wt(e)$ many short edges between each
adjacent pair of the $\len(e)+1$ many adjacent vertices in $\D'$
between $i$ and $j$, where
$\len(e) = j - i$ is the length of $e$. This
alteration increases the genus by $\wt(e) \len(e) - 1$, which agrees
with the weight of the edge $e$ in the definition of
$\delta(\Lambda)$. Thus, we can assume that $\Lambda$ has no edges,
and, by the same argument applied to the templates $\Gamma_i$, that $m
= 0$.

If $B$ has a non-zero entry $b_{i'j'}$,  let $B'$ be the matrix
obtained from $B$ by lowering its
$(i',j')$th entry by one. Again, let $\D'$ be the floor diagram
that corresponds to the altered data. Then $\D'$ differs from $\D$ by $j'$
additional short edges between each adjacent pair of vertices in $\D'$
between $d-i'$ and $d$, increasing the genus by $i'\cdot j'$. This
agrees with the weight of the $(i', j')$th entry of $B$ in the
definition of $\delta(B)$, and we can assume that $B$ is
zero. Similarly, we can assume that $A$ is zero. Thus, if $\D$ is
connected, we are done as the proposition holds for $\D_0$.

If $\D$ equals a union $\D_1 \sqcup \D_2$ of two connected floor
diagrams $\D_1$ and $\D_2$, let $\tilde{\D}$ be an $(\alpha, \beta)$-marking of
$\D$. To prove the proposition for  non-connected floor diagrams, we
use the correspondence between marked floor diagrams and
  tropical plane curves\footnote{It is possible, although very tedious, to prove this lemma
  for disconnected floor diagrams $\D$ purely combinatorially. In the
  interest of a more compact presentation, we chose the
  route via tropical geometry.}
 through a fixed ``horizontally stretched'' point
  configuration\footnote{A configuration $\{ (x_i, y_i)\}$ of
    $\tfrac{(d+3)d}{2} - \delta$ points in
    $\RR^2$ is \emph{horizontally stretched} if, for all $i$, $x_i < x_{i+1}$, $y_i
    < y_{i+1}$,  and $\min_{i \neq j} |x_i - x_j| > (d^3 + d) \cdot \max_{i
      \neq j} |y_i - y_j|$.}.
All relevant definitions can, for example, be found in
\cite[Sections~2 and~3]{FM}. For an illustration of the correspondence,
see Figure~\ref{fig:marking_tropicalcurve_correspondence}.

 We can identify the marked floor diagram $\tilde{\D}$ with a tropical
 plane curve $C$
through a horizontally stretched point configuration in $\RR^2$
 (see \cite[Theorem~3.17]{FM}).
The two connected components $\tilde{\D_1}$ and $\tilde{\D_2}$ of
$\tilde{\D}$ are markings of $\D_1$ and $\D_2$, respectively. Each $\tilde{\D_i}$
corresponds to an irreducible component $C_i$ of $C$ (i.e., each $C_i$
is not a non-trivial union of tropical 
plane curves).

\begin{figure}
\begin{picture}(100,130)(120,5)
\put(30,130){\reflectbox{\includegraphics[scale=0.5,
    angle=270]{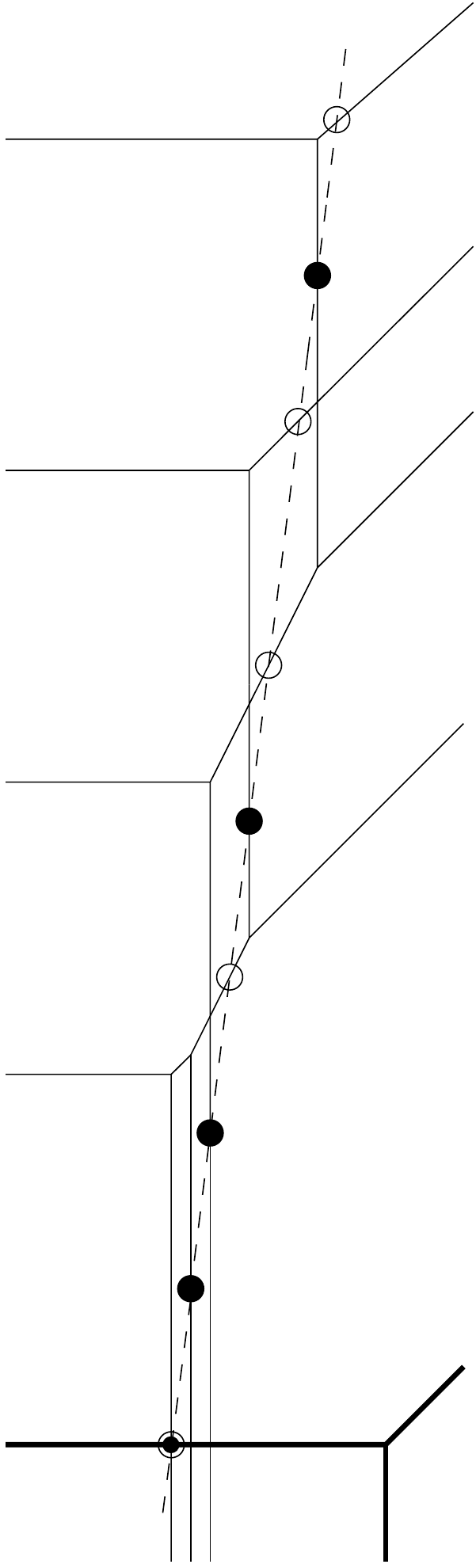}}}
\put(193,94){$2$}
\put(98,68){\scalebox{1.5}{$\Box$}}
\put(153,81){\scalebox{1.5}{$\Box$}}
\put(211,88){\scalebox{1.5}{$\Box$}}
\put(30,0){
\begin{picture}(50,35)(-55,-23)\setlength{\unitlength}{4pt}\thicklines
\oooo 
\put(1,0){\line(1,0){3}}
\put(2.5,0){\vector(1,0){1}}
\put(5,0){\circle*{2}}

\put(26,0){\line(1,0){3}}
\put(27.5,0){\vector(1,0){1}}
\put(25,0){\circle*{2}}
\put(35,0){\circle*{2}}
\put(40,0){\circle*{2}}
\put(45,0){\circle*{1}}
\put(45,0){\circle{2}}

\qbezier(10.8,0.6)(14,2.5)(17.5,2.5)
\qbezier(17.5,2.5)(21,2.5)(24.2,0.6)
\put(17.5,2.5){\vector(1,0){1}}

\qbezier(5.8,-0.6)(9,-2.5)(12.5,-2.5)
\qbezier(12.5,-2.5)(16,-2.5)(19.2,-0.6)
\put(12.5,-2.5){\vector(1,0){1}}

\qbezier(20.8,-0.6)(24,-2.5)(27.5,-2.5)
\qbezier(27.5,-2.5)(31,-2.5)(34.2,-0.6)
\put(27.5,-2.5){\vector(1,0){1}}
\put(27.5,-4){\makebox(0,0){$2$}}

\qbezier(30.8,0.6)(32,1.5)(35,1.5)
\qbezier(35,1.5)(38,1.5)(39.2,0.6)
\put(36,1.5){\vector(1,0){1}}
\qbezier(30.8,0.6)(33,3)(37.5,3)
\qbezier(37.5,3)(42,3)(44.2,0.6)
\put(38.5,3){\vector(1,0){1}}
\end{picture}

} 
\end{picture}
\caption{Correspondence between connected components of a relative
  marked floor diagram and irreducible components of a tropical
  plane curve through a horizontally stretched configuration (on a
  dashed line), tangent
  to a tropical line (in bold). The three points of intersection  of the
  components of the tropical curves are highlighted by $\Box$. The
  right-most such intersection is of multiplicity $a = 2$, the two
  others
 of multiplicity $a = 1$. We omit writing weights equal
  to $1$.}
\label{fig:marking_tropicalcurve_correspondence}
\end{figure}

 This identification is degree- and
 genus-preserving, thus $d(\D_j)$ equals the degree\footnote{For the
   purpose of this proof, we only need that there is well defined
   notion of the degree $d(C)$ of a tropical plane curve $C$, as well as a notion
 of the number $\delta(C)$ of its nodes (again, see \cite[Sections~2 and~3]{FM}.} $d(C_i)$ of the
 tropical curve $C_i$ and $\delta(\D_i)$ equals the number
 $\delta(C_i)$ of tropical nodes of~$C_i$. As the tropical curve $C$
 passes through a horizontally stretched point configuration, all
 intersection points of $C_1$ with $C_2$ are locally intersections of a
 horizontal edge with some weight $a \in \ZZ_{\ge 1}$ and an edge
 of slope $\tfrac{1}{n}$, for $n \in \ZZ_{\ge 1}$, and weight $1$
 \cite[Section 5]{BM2} (see Figure~\ref{fig:tropical_node}).
Such intersection is of multiplicity
 $a$ (see, for example, \cite[Theorem~4.2]{RST05}) and thus
 contributes $a$ to the number of nodes of $C$. 

\begin{figure}[h]
\begin{center}
\begin{picture}(60,15)(0,40) \setlength{\unitlength}{1.5pt}\thicklines
\put(10,30){\line(1,0){40}}
\put(14,22){\line(2,1){32}}
\put(13,33){\large a}
\put(37,37){\large 1}
\end{picture}
\end{center}
\label{fig:tropical_node}
\caption{The intersection of a horizontal
  weight $a$ edge and a non-horizontal weight~$1$ edge.}
\end{figure}

We claim that the number of nodes of $C$ is
\begin{equation}
\label{eqn:totalnodenumber}
\delta(C) = \big( \sum_{j \in J} \delta(\Gamma_j) \big) +
\delta(\Lambda, A, B).
\end{equation}
Indeed, the nodes of $C$ come in two types: firstly, bounded horizontal edges $e$ of weight
at least $2$ contribute $\wt(e) - 1$ nodes each ($e$ represents $\wt(e)$
many identified weight-$1$ edges, resulting in a genus deficiently of
$\wt(e) -1$). Secondly, intersections of
the form as in Figure~\ref{fig:tropical_node} contribute $a$
nodes each (cf.\ with \cite[Theorem~4.2]{RST05}). The former correspond to the edges of $\D$ of weight at least $2$.
The latter correspond to pairs $(e, j)$, where $j$ is a vertex of $\D$
and $e$ is either an edge $i \to k$ of $\D$ of weight $a$, for some $a
\in \ZZ_{\ge 1}$ and $i < j
< k$, or an $\alpha$- or $\beta$-edge of the corresponding marking of
$\D$ with source $i < j$. Thus, the bounded edges $e$, recorded in
$\Gamma_1, \dots, \Gamma_m$, and $\Lambda$, contribute $\wt(e)\len(e) - 1$
to the number of nodes of $C$. Each of the $a_{ij}$ resp.\ $b_{ij}$ many
$\alpha$- resp. $\beta$-edges of weight $j$ with source $d+1-i$ contribute $j$
nodes. Thus (\ref{eqn:totalnodenumber}) follows by the definition of
$\delta(\Gamma_i)$, $\delta(\Lambda)$, $\delta(A)$, and $\delta(B)$.

By the tropical B\'ezout Theorem \cite[Theorem~4.2]{RST05}, the
tropical curves $C_1$ and
$C_2$ intersect in $d(C_1) d(C_2) = d(\D_1)d(\D_2)$ many nodes. Thus
we have
\begin{displaymath}
\label{eqn:node_difference}
\big( \sum_{j \in J} \delta(\Gamma_j) \big) +
\delta(\Lambda, A, B) - d(C_1)  \cdot d(C_2)
= \delta(C_1) + \delta(C_2) = \delta(\D_1) + \delta(\D_2).
\end{displaymath}
For floor diagrams $\D$ with two components, the proposition follows from
the definition of the cogenus $\delta(\D)$. The proof for more than
two components is similar.

\end{proof}



With an extended template $(\Lambda, A, B)$ we further associate the following
numerical data:
for $1 \le j \le l(\Lambda)$, let $\varkappa_j(\Lambda) $
denote the sum of the weights of edges $i \to k$ of
$\Lambda$ with $i
< j \le k$. Define $d_{\min}(\Lambda, A, B)$ to be the smallest positive integer $d$ such
that $(\Lambda, A, B)$ can appear (at the right end) in a floor
diagram on $\{1, 2, \dots, d \}$. We will see later that
$d_{\min}$ is given by an explicit formula.
For a matrix $A = (a_{ij})$ of non-negative integers with finite support
define the ``weighted lower sum sequence''~$\wls(A)$ by
\begin{equation}
\label{eqn:wls}
\wls(A)_i \stackrel{\text{def}}{=} \sum_{
i' \ge i, 
j \ge 1
} j \cdot a_{i'j}.
\end{equation}
This sequence records, for each row $i$ of $A$, the sum of entries of
$A$ in or below the $i$th row of $A$, weighted by the column
index. As can be seen from Step~2 of
Definition~\ref{def:relativemarking}, $\wls(A)_i\,$ resp.\ $\wls(B)_i \,$
equals the total weight of $\alpha$-edges resp.\ $\beta$-edges that
pass vertex $d(\D) - i + 1$ in an $(\alpha, \beta)$-marking of a
floor diagram $\D$ ($A$ and
$B$ are obtained from this marking via
(\ref{eqn:relativedecomposition})).

We now define the number of ``markings'' of templates and
extended templates and relate them to the number of $(\alpha,
\beta)$-markings of the corresponding floor diagrams. To each template
$\Gamma$, we associate a polynomial: for $k \ge k_{\min}(\Gamma)$, let $\Gamma_{(k)}$ denote
the graph obtained from $\Gamma$ by first adding $k+i-1-\varkappa_i$
short edges connecting $i-1$ to i, for $1 \le i \le l(\Gamma)$, and
then subdividing each edge of the resulting graph by introducing one
new vertex for each edge. The number of short edges in the first step
equals the number of edges removed during the template
decomposition. For example, if $\Gamma$ is the second template from
the top in Figure~\ref{fig:templates}, then $\Gamma_{(k)}$ has $k-1$
resp.\ $k$ (subdivided) edges between the first and second resp.\
second and third vertex of~$\Gamma$. By \cite[Lemma 5.6]{FM} the number of
linear extensions (up to equivalence, see the paragraph after Definition \ref{def:relativemarking}) of
the vertex poset of the graph $\Gamma_{(k)}$ extending the vertex
order of $\Gamma$ is given by a polynomial $P_{\Gamma}(k)$ in $k$,
whenever $k \ge k_{\min}(\Gamma)$ (see Figure~\ref{fig:templates}).

For each pair of sequences $(\alpha, \beta)$ and each extended template $(\Lambda, A,
B)$ satisfying~(\ref{eqn:inequalities2}) and $d \ge d_{\min}$, where $d = \sum_{i \ge
  1} i (\alpha_i + \beta_i)$, we define its ``number of markings'' as follows. Write $l =l(\Lambda)$ and let
$\P(\Lambda, A, B)$ be the poset obtained from $\Lambda$ by
\begin{enumerate}
\item first creating an additional vertex $l + 1$ ($> l$),
\item
\label{itm:beta-edges}
then adding $b_{ij}$ edges of weight $j$ between $l  - i$
  and $l + 1$, for $1 \le i \le l$ and $j \ge 1$,
\item then adding $\beta_j - \sum_{i\ge 1} b_{ij}$ edges of weight $j$ between $l$
  and $l + 1$, for $j \ge 1$,
\item then adding
\begin{equation}
\label{eqn:numberofshortedges}
d - l(\Lambda) + i - 1 - \varkappa_i(\Lambda) - \wls(A)_{l + 1 -
  i} - \wls(B)_{l+1 - i}
\end{equation}
(``short'') edges of weight $1$ connecting $i-1$ and $i$, for $1 \le i \le l$, and
finally
\item subdividing all edges of the resulting graph by introducing a
  midpoint vertex for each edge.
\end{enumerate}

\begin{example}
If $(\Lambda, A, B)$ is the extended template of
Example~\ref{ex:templatecomposition}, then $\P(\Lambda, A, B)$ is
(edges are ordered left to right)
\begin{center}
\begin{picture}(270,120)(195,-80)\setlength{\unitlength}{4pt}\thicklines

\put(57,0){\makebox(0,0){$\P(\Lambda, A, B) = $}}

\multiput(70,0)(10,0){5}{\circle{2}}

\qbezier(70.8,0.6)(72,2.5)(75,2.5)\qbezier(75,2.5)(78,2.5)(79.2,0.6)
\qbezier(70.8,-0.6)(72,-2.5)(75,-2.5)\qbezier(75,-2.5)(78,-2.5)(79.2,-0.6)
 \put(75,1.3){\makebox(0,0){\Large $\vdots$}}
 \put(75,-6){\makebox(0,0){\footnotesize $
 \begin{array}{c}
d-5 \\
\text{many}
\end{array}
$}}
\put(75,2.5){\circle*{1.3}}
\put(75,-2.5){\circle*{1.3}}

\qbezier(80.8,-0.6)(82,-1)(85,-1)\qbezier(85,-1)(88,-1)(89.2,-0.6)
\qbezier(80.8,-0.6)(82,-5)(85,-5)\qbezier(85,-5)(88,-5)(89.2,-0.6)
\put(85,-1.9){\makebox(0,0){\Large $\vdots$}}
 \put(85,-9){\makebox(0,0){\footnotesize $
 \begin{array}{c}
d-9 \\
\text{many}
\end{array}
$}}
\put(85,-1){\circle*{1.3}}
\put(85,-5){\circle*{1.3}}

\qbezier(90.8,0.6)(92,2.5)(95,2.5)\qbezier(95,2.5)(98,2.5)(99.2,0.6)
\qbezier(90.8,-0.6)(92,-2.5)(95,-2.5)\qbezier(95,-2.5)(98,-2.5)(99.2,-0.6)
 \put(95,1.3){\makebox(0,0){\Large $\vdots$}}
 \put(95,-6){\makebox(0,0){\footnotesize $
 \begin{array}{c}
d-9 \\
\text{many}
\end{array}
$}}
\put(95,-2.5){\circle*{1.3}}
\put(95,2.5){\circle*{1.3}}

\put(81,0){\line(1,0){8}}
 \put(83.5,1.5){\makebox(0,0){$2$}}
 \put(87,1.5){\makebox(0,0){$2$}}

\put(85,0){\circle*{1.3}}

 \qbezier(80.8,0.6)(84,5)(90,5)\qbezier(90,5)(96,5)(99.2,0.6)
\put(90,5){\circle*{1.3}}

\qbezier(80.8,0.6)(86,6.5)(95,6.5)\qbezier(95,6.5)(104,6.5)(109.2,0.6)
\put(95,6.5){\circle*{1.3}}

\qbezier(70.8,0.6)(78,8)(90,8)\qbezier(90,8)(102,8)(109.2,0.6)
 \put(79,8){\makebox(0,0){$2$}}
 \put(101,8){\makebox(0,0){$2$}}
\put(90,8){\circle*{1.3}}

\qbezier(100.8,0.6)(102,2.5)(105,2.5)\qbezier(105,2.5)(108,2.5)(109.2,0.6)
\put(105,2.5){\circle*{1.3}}

\qbezier(100.8,-0.6)(102,-2.5)(105,-2.5)\qbezier(105,-2.5)(108,-2.5)(109.2,-0.6)
\put(105,-2.5){\circle*{1.3}}

 \put(105,1.3){\makebox(0,0){\Large $\vdots$}}
 \put(105,-14){\makebox(0,0){\footnotesize $
 \begin{array}{c}
\beta_1 \text{ of wt }1 \\
\beta_2-1 \text{ of wt }2\\
\beta_3-1 \text{ of wt }3\\
\beta_4 \text{ of wt }4\\
\vdots
\end{array}
$}}
\end{picture}
\end{center}

\end{example}

We denote by $Q_{(\Lambda,
A, B)}(\alpha; \beta)$ the number of linear orderings on $\P(\Lambda, A, B)$
(up to equivalence)
that extend the linear order on $\Lambda$. As $d \ge
d_{\min}(\Lambda, A, B)$ if and only if~(\ref{eqn:numberofshortedges}) is
non-negative, for $1 \le i \le l$, we have 
\begin{displaymath}
d_{\min}(\Lambda, A, B) = \max_{1 \le i \le l(\Lambda)}(
l(\Lambda) - i + 1 + \varkappa_i(\Lambda) + \wls(A)_{l(\Lambda) + 1 -
  i} + \wls(B)_{l(\Lambda)+1 - i}).
\end{displaymath}

\begin{example}
Recall from Example~\ref{ex:templatecomposition} that, to construct the
floor diagram $\D$ from the extended template $(\Lambda, A, B)$ (and
the two templates $\Gamma_1$ and $\Gamma_2$), we had to add
(cf.~(\ref{eqn:numberofshortedges})) precisely  
$d-5$, $d-9$ and $d-9$ short edges between the last four vertices of
$\D$, where $d$ is the degree of $\D$. The invariant $d_{\min}(\Lambda,
A, B,)$ measures for which $d$ this is possible. In this example, we
have $d_{\min}(\Lambda, A, B) = \max(5, 9, 9) = 9$.
\end{example}

For sequences $s, t_1, t_2, \dots$ with $s \ge \sum_i t_i$ (component-wise), we denote by
\smallskip
\begin{displaymath}
\binom{s}{t_1, t_2, \dots} \stackrel{\text{def}}{=} \frac{s!}{t_1 !  t_2 !   \cdots
(s - \sum_i t_i)!}
\end{displaymath}
\smallskip
the multinomial coefficient of sequences.

We obtain all $(\alpha, \beta)$-markings of the floor
diagram $\D$ that come from a compatible pair of sequences $(\{
\alpha^i \}, \{ \beta^i\})$ by independently ordering the
$\alpha$-vertices and the non-$\alpha$-vertices. 
The number of such markings is (via the correspondence
(\ref{eqn:relativedecomposition}))
\begin{equation}
\label{eqn:tailfactor}
\Big( \prod_{s=1}^m P_{\Gamma_s}(k_s) \Big)\cdot  \binom{\alpha}{a^T_1, a^T_2, \dots} \cdot Q_{(\Lambda,
A, B)}(\alpha; \beta),
\end{equation}
where $a^T_1, a^T_2, \dots$ are the {\it column} vectors of $A$. We conclude this section by
recasting relative Severi degrees in terms of templates and extended
templates.

\begin{proposition}
\label{prop:mainformula}
For any $\delta \ge 1$, the relative Severi degree $N_{\alpha,
  \beta}^\delta$ is given by
\smallskip
\begin{equation}
\label{eqn:mainformula}
\hspace{-8mm} 
\sum_{\tiny
\begin{array}{c}
(\Gamma_1, \dots, \Gamma_m), \\
 (\Lambda, A, B)
\end{array}
} \hspace{-3mm} \Big( \prod_{s =
  1}^m\mu(\Gamma_s) \hspace{-1mm}\sum_{k_1, \dots k_m} \prod_{s=1}^m
P_{\Gamma_s}(k_s) \Big) \hspace{-0.0mm} \cdot \hspace{-0.0mm} \Big( \mu(\Lambda) \prod_{i \ge 1} i^{\beta_i} \binom{\alpha}{a_1,
  a_2, \dots} Q_{(\Lambda,A, B)}(\alpha; \beta) \Big),
\hspace{-3mm}
\end{equation}
\smallskip
where the first sum is over all collections $(\Gamma_1, \dots,
\Gamma_m)$ of templates and all extended templates $(\Lambda, A, B)$
satisfying (\ref{eqn:inequalities2}), $d \ge d_{\min}(\Lambda, A, B)$ and
\smallskip
\begin{displaymath}
\sum_{i = 1}^m \delta(\Gamma_i) + \delta(\Lambda) + \delta(A) +
\delta(B) = \delta,
\end{displaymath}
\smallskip
and the second sum is over all positive integers $k_1, \dots,
k_m$ satisfying (\ref{eqn:inequalities}).
\end{proposition}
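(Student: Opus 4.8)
The plan is to assemble Proposition~\ref{prop:mainformula} out of the relative correspondence theorem together with the template decomposition built up in this section; the proof is essentially a reorganization of a single sum, so the real work lies in matching up index sets and tracking weights. One starts from Theorem~\ref{thm:relcorrespondence},
\[
N^\delta_{\alpha,\beta} = \sum_\D \mu_\beta(\D)\,\nu_{\alpha,\beta}(\D),
\]
the sum over floor diagrams $\D$ of degree $d = \sum_{i\ge 1} i(\alpha_i+\beta_i)$ and cogenus $\delta$, and first refines $\nu_{\alpha,\beta}(\D)$ according to the compatible pair $(\{\alpha^i\},\{\beta^i\})$ chosen in Step~1 of Definition~\ref{def:relativemarking}: every $(\alpha,\beta)$-marking of $\D$ is built from exactly one such pair, so $\nu_{\alpha,\beta}(\D)$ is the sum, over compatible pairs, of the number of equivalence classes of markings that the pair produces.

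Next I would feed $(\D,\{\alpha^i\},\{\beta^i\})$ through the decomposition map (\ref{eqn:relativedecomposition}) to obtain a collection of templates with positions $\{(\Gamma_s,k_s)\}_{s=1}^m$ together with an extended template $(\Lambda,A,B)$. As explained around (\ref{eqn:inequalities}) and (\ref{eqn:inequalities2}), once $d$ is fixed this map is a bijection from the set of triples $(\D,\{\alpha^i\},\{\beta^i\})$ with $\D$ of degree $d$ onto the set of tuples $(\{(\Gamma_s,k_s)\},(\Lambda,A,B))$ satisfying (\ref{eqn:inequalities2}), $d\ge d_{\min}(\Lambda,A,B)$, and (\ref{eqn:inequalities}); and by the (unlabeled) cogenus proposition established above one has $\delta(\D) = \sum_{s=1}^m\delta(\Gamma_s)+\delta(\Lambda)+\delta(A)+\delta(B)$, so restricting to $\delta(\D)=\delta$ is the same as imposing the cogenus equation in the statement. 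Under this bijection the number of markings of $\D$ arising from the chosen compatible pair is precisely the quantity (\ref{eqn:tailfactor}), the three factors of which isolate the orderings internal to the templates ($\prod_s P_{\Gamma_s}(k_s)$), the choice of which $\alpha$-tangency points are attached to which vertex ($\binom{\alpha}{a^T_1,a^T_2,\dots}$), and the orderings of the remaining non-$\alpha$ vertices recorded by the poset $\P(\Lambda,A,B)$ ($Q_{(\Lambda,A,B)}(\alpha;\beta)$).

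It then remains to push the weight $\mu_\beta(\D)$ through the decomposition. Since every short edge inserted when reconstructing $\D$ from the decomposition data has weight $1$, the weight-$\ge 2$ edges of $\D$ are exactly the edges of $\Gamma_1,\dots,\Gamma_m$ and of $\Lambda$, so $\mu(\D) = \bigl(\prod_{s=1}^m\mu(\Gamma_s)\bigr)\mu(\Lambda)$ and hence $\mu_\beta(\D) = \prod_{i\ge 1}i^{\beta_i}\cdot\bigl(\prod_{s=1}^m\mu(\Gamma_s)\bigr)\mu(\Lambda)$. Substituting this factorization and (\ref{eqn:tailfactor}) into the correspondence formula and re-indexing the total sum — the outer sum running over collections $(\Gamma_1,\dots,\Gamma_m)$ of templates and extended templates $(\Lambda,A,B)$ subject to the cogenus equation, (\ref{eqn:inequalities2}) and $d\ge d_{\min}$, and the inner sum over positions $k_1,\dots,k_m$ subject to (\ref{eqn:inequalities}) — regroups everything into the two bracketed expressions of (\ref{eqn:mainformula}): $\mu(\Gamma_s)$ and $P_{\Gamma_s}(k_s)$ enter the first bracket, and $\mu(\Lambda)$, $\prod_i i^{\beta_i}$, the multinomial $\binom{\alpha}{a_1,a_2,\dots}$ and $Q_{(\Lambda,A,B)}(\alpha;\beta)$ enter the second.

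The point I expect to require the most care is the assertion underlying (\ref{eqn:tailfactor}), namely that equivalence of $(\alpha,\beta)$-markings of $\D$ is compatible with the decomposition: one must check that a weight-preserving automorphism of a marked $\D$ fixing the vertices of $\D$ amounts to choosing such automorphisms independently on the pieces, so that the template orderings, the $\alpha$-point assignment, and the $\P(\Lambda,A,B)$-orderings may legitimately be counted separately up to equivalence. The other potentially delicate ingredient, the cogenus identity for disconnected $\D$, has already been dispatched via the tropical correspondence in the proposition preceding this one, and the remaining reindexing is exactly the routine bookkeeping that the inequalities (\ref{eqn:inequalities}) and (\ref{eqn:inequalities2}) were designed to make transparent.
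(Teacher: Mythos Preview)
Your proposal is correct and follows essentially the same approach as the paper: start from Theorem~\ref{thm:relcorrespondence}, factor $\mu_\beta(\D) = \prod_{i\ge 1} i^{\beta_i}\cdot\bigl(\prod_s \mu(\Gamma_s)\bigr)\cdot\mu(\Lambda)$, and use (\ref{eqn:tailfactor}) for the number of markings per compatible pair. The paper's proof is a terse two-sentence version of exactly this; your write-up simply makes explicit the bijection underlying (\ref{eqn:relativedecomposition}), the role of the cogenus proposition, and the compatibility of equivalence with the decomposition, all of which the paper treats as already established in the preceding discussion.
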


\begin{proof}
By Theorem \ref{thm:relcorrespondence}, the relative Severi degree is
given by
\begin{displaymath}
N^{\delta}_{\alpha, \beta} = \sum_{\D}
\mu_\beta(\D) \nu_{\alpha, \beta}(\D),
\end{displaymath}
the sum ranging over all floor diagrams $\D$ of degree $d = \sum_{i
  \ge 1} i (\alpha_i + \beta_i)$ and cogenus~$\delta$.
The result follows from $\mu_\beta(\D) = \prod_{i \ge 1} i^{\beta_i} \cdot  \big( \prod_{s = 1}^m
\mu(\Gamma_s) \big) \cdot \mu(\Lambda)$ and (\ref{eqn:tailfactor}).
\end{proof}

\section{Relative Severi Degrees  and Polynomiality}
\label{sec:proofs}

We now turn to the proofs of our main results by first showing a
number of technical lemmas.
For a graph $G$, we denote by $\#E(G)$ the number of
edges of~$G$. We write $||A||_1 = \sum_{i, j
  \ge 1} |a_{ij}|$ for the $1$-norm of a (possibly infinite) matrix $A = (a_{ij})$.

\begin{lemma}
\label{lem:polyoffinaltemplate}
For every extended template $(\Lambda, A, B)$, there is a polynomial $q_{(\Lambda,A, B)}$ in $\alpha_1, \alpha_2, \dots,\beta_1,
\beta_2, \dots$ of degree $\#E(\Lambda) + ||B||_1 + \delta(B)$ such that, for all
$\alpha$ and $\beta$ satisfying (\ref{eqn:inequalities2}),
the number $Q_{(\Lambda,A, B)}(\alpha; \beta)$ of linear orderings (up
to equivalence) of
the poset $\P(\Lambda, A, B)$ is given by
\begin{displaymath}
Q_{(\Lambda,A, B)}(\alpha; \beta) = \frac{(|\beta| -
  \delta(B) )!}{\beta !} \cdot q_{(\Lambda,
A, B)}(\alpha; \beta)
\end{displaymath}
provided
$\sum_{i \ge 1} i (\alpha_i + \beta_i) \ge d_{\min}(\Lambda, A, B)$.
\end{lemma}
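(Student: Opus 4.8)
The plan is to count the linear orderings of $\P(\Lambda, A, B)$ by first separating the $\beta$-edges that attach directly to the second-to-last original vertex (the vertex $l$ of $\Lambda$) from everything else. Concretely, the poset $\P(\Lambda, A, B)$ built in steps (1)--(5) consists of a ``fixed part'' coming from $\Lambda$ and the short edges, together with a large ``bundle'' of subdivided $\beta$-edges incident to the final vertex $l+1$: there are $b_{ij}$ weight-$j$ edges from $l-i$ to $l+1$ (for $i \ge 1$), and $\beta_j - \sum_{i \ge 1} b_{ij}$ weight-$j$ edges from $l$ to $l+1$. I would order the midpoints of these edges into the linear extension one at a time. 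The midpoints of the edges emanating from $l$ (there are $\beta_j - \sum_i b_{ij}$ of weight $j$) are completely interchangeable, since $l$ is the largest vertex of $\Lambda$ below them; up to the equivalence of Definition~\ref{def:relativemarking}, inserting them amounts to choosing positions in a linearly ordered sequence, and the count of ways to interleave $n$ such groups of sizes $m_1, m_2, \dots$ into an ordering is a multinomial-type factor.

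First I would make precise that the number of equivalence classes of linear extensions factors as a product of a combinatorial ``core'' count, depending only on $\Lambda$, $A$, $B$ (not on the sizes $\alpha_i,\beta_i$ beyond the inequalities (\ref{eqn:inequalities2})), times a factor accounting for placing the roughly $|\beta|$ many free $\beta$-midpoints. The key observation is that, because $d \ge d_{\min}(\Lambda,A,B)$, there are enough short edges between consecutive vertices of $\Lambda$ that the number of ``slots'' available for inserting the $\beta$-midpoints coming off vertex $l$ stabilizes: the combinatorial structure of where they can go no longer changes as $\beta_1, \beta_2, \dots$ grow. The number of such edges off vertex $l$ is $\beta_j - \sum_{i \ge 1} b_{ij}$ of weight $j$, totalling $|\beta| - ||B||_1$ edges. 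Each such edge's midpoint must be ranked after $l$; among themselves, edges of equal weight are equivalent, and edges of different weight attached to the same vertex $l$ are also freely orderable relative to one another. Hence the contribution of this bundle is a multinomial coefficient of the shape $\binom{|\beta| - \delta(B)}{\;\cdot\;}$ divided by the symmetry factors $\beta_j!$ (corrected by the finitely many $b_{ij}$), which is exactly where $\tfrac{(|\beta| - \delta(B))!}{\beta!}$ comes from.

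Next I would carry out the degree count. The polynomial $q_{(\Lambda,A,B)}$ collects: (a) the finitely many ``non-free'' orderings, those involving the $||B||_1$ many $\beta$-edges that attach to vertices $l-i$ with $i \ge 1$ (each contributes a factor linear in the relevant $\beta_j$, since inserting it picks out a position among a sequence whose length grows linearly in $|\beta|$), for a total degree contribution $||B||_1$; (b) a factor of degree $\#E(\Lambda)$ coming from ordering the $\#E(\Lambda)$ subdivided edges of $\Lambda$ relative to the short edges, each of which, via \cite[Lemma~5.6]{FM}-type reasoning, contributes a polynomial in $d$ (hence of degree $1$ in our grading since $\deg d = 1$); and (c) the remaining symmetry corrections of degree $\delta(B)$ arising when we rewrite the bundle multinomial $\binom{|\beta|-\delta(B)}{\dots}$ in terms of $\tfrac{(|\beta|-\delta(B))!}{\beta!}$ times a polynomial — the discrepancy between $\beta!$ and the product of the actual group sizes is a polynomial in $\beta_1,\dots,\beta_\delta$ of degree $\delta(B)$. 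Summing, $\deg q_{(\Lambda,A,B)} = \#E(\Lambda) + ||B||_1 + \delta(B)$, as claimed. Finally I would verify that the identity holds as an identity of polynomials (not just for integer values) by noting it holds for all integer $(\alpha,\beta)$ in the cone cut out by (\ref{eqn:inequalities2}) with $d \ge d_{\min}$, a Zariski-dense set.

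The main obstacle I anticipate is (b)--(c): rigorously showing that the ``core'' count of linear extensions of the $\Lambda$-part against the short edges is genuinely polynomial in $d$ of the stated degree, uniformly once $d \ge d_{\min}(\Lambda,A,B)$, and that it cleanly decouples from the $\beta$-bundle factor. This is the relative analogue of \cite[Lemma~5.6]{FM}, and the technical heart is confirming that the equivalence relation (weight-preserving automorphisms fixing $\Lambda$) does not mix the two parts and that the insertion counts stabilize — which is exactly what the hypothesis $d \ge d_{\min}$ and the ``connectedness'' condition in Definition~\ref{def:extendedtemplate} are designed to guarantee.
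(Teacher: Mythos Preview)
Your approach is essentially the paper's: both proceed by first ordering a finite ``skeleton'' (the vertices $0,\dots,l+1$, the midpoints of the edges of $\Lambda$, and the midpoints of the $||B||_1$ non-free $\beta$-edges from step~(2)), and then inserting the remaining midpoints---short edges into the intervals $[i-1,i]$ for $i\le l$, and free $\beta$-edges into $[l,l+1]$---to obtain products of binomials in $d$ and a multinomial in the $\beta_j$'s, which is then rewritten as $\tfrac{(|\beta|-\delta(B))!}{\beta!}$ times a polynomial. Two small points to clean up: your ``core'' count is not independent of $\alpha,\beta$ (the number of short edges in each interval depends on $d$, as you yourself note in~(b)); and the ``connectedness'' condition of Definition~\ref{def:extendedtemplate} plays no role in this lemma---only the threshold $d\ge d_{\min}$ matters, ensuring that expression~(\ref{eqn:numberofshortedges}) is nonnegative so the binomials are genuine polynomial evaluations.
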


\begin{proof}
We can choose a linear extension of the order on the vertices of
$\Lambda$ to the poset $\P(\Lambda, A, B)$ in two steps. First, we choose a linear order on the
vertices $0, \dots, l(\Lambda) \!+\!1$, the midpoint vertices of the edges
of $\Lambda$, and the midpoint vertices of the edges 
created in Step (\ref{itm:beta-edges}) in the definition of
$\P(\Lambda,A,B)$. In a second step, we choose an extension to a
linear order on all vertices. There are only finitely many choices in
the first step (in particular, they do not involve $\alpha$ and
$\beta$). Thus, for each choice in the first step, the number of
linear extensions in second step is of the desired form.

Let $r_i$ be the number of vertices between
$i-1$ and $i$ after the first extension, for $1 \le i \le l(\Lambda) +
1$, and let $\sigma_i$ be the number of equivalent such
linear orderings of the interval between $i - 1$ and $i$ ($\sigma_i$ is independent of the particular choice
of the linear order). To
insert the additional vertices (up to equivalence) between the vertices
$0$ and $l = l(\Lambda)$ we have
\begin{equation}
\label{eqn:numberoforderings}
\prod_{i = 1}^{l} \frac{1}{\sigma_i} \binom{d - l(\Lambda) + i - 1 - \varkappa_i(\Lambda) - \wls(A)_{l + 1 -
  i} - \wls(B)_{l+1 - i} + r_i}{r_i}
\end{equation}
many possibilities where again $d = \sum_{i \ge 1} i (\alpha_i + \beta_i)$. If $d \ge d_{\min}(\Lambda, A, B)$ then expression (\ref{eqn:numberoforderings}) is a polynomial in
$d$ of degree $\sum_{i=1}^l r_i$, and thus in $\alpha_1, \alpha_2,
\dots, \beta_1, \beta_2, \dots$. The number of (equivalent) orderings of the
vertices between $l$ and $l + 1$ is the multinomial coefficient
\begin{equation}
\label{eqn:numberoforderings2}
\binom{|\beta | - ||B||_1 +
  r_{l+1}}{\beta_1 -|b^T_1|, \beta_2 - |b^T_2|, \dots },
\end{equation}
where $|b^T_j|$ denotes the sum of the entries in the $j$th column of $B$.
As $||B||_1 \le \delta(B)$, expression
(\ref{eqn:numberoforderings2}) equals, for all $\beta_1, \beta_2, \dots \ge 0$,
\begin{equation}
\label{eqn:numberoforderings3}
\binom{|\beta|}{\beta_1, \beta_2, \dots} \frac{(|\beta| -
  \delta(B) )!}{|\beta|!} P(\beta) = \frac{(|\beta| -
  \delta(B) )!}{\beta !} P(\beta)
\end{equation}
for a polynomial $P$ in $\beta_1, \beta_2, \dots$ of degree
$r_{l+1} + \delta(B)$.
The product of
(\ref{eqn:numberoforderings}) and (\ref{eqn:numberoforderings3}) is
\begin{equation}
\label{eqn:product}
\frac{(|\beta| -
  \delta(B) )!}{\beta !} P'(\alpha; \beta)
\end{equation}
for a polynomial $P'$ in $\alpha_1, \alpha_2, \dots, \beta_1,
\beta_2, \dots$ of degree
$\#E(\Lambda) + ||B||_1 + \delta(B) $, provided $d \ge
d_{\min}(\Lambda, A, B)$, where we used that $\sum_{i = 1}^{l + 1} r_i =
\#E(\Lambda) + ||B||_1$. As (\ref{eqn:product}) equals
the number of linear extensions (up to equivalence) that can be obtained
by linearly ordering the vertices in all segments between $i - 1$ and
$i$, for $1 \le i \le l + 1$, the proof is complete.
\end{proof}

In Section~\ref{sec:relativenodepolys}, we defined, for an extended
template $(\Lambda, A, B)$, the invariant 
\begin{equation}
\label{eqn:dmin}
d_{\min} = \max_{1 \le i \le l(\Lambda)}( l(\Lambda) - i + 1 + \varkappa_i(\Lambda) + \wls(A)_{l(\Lambda)+1-i} + \wls(B)
_{l(\Lambda)+1-i}).
\end{equation}
It equals the minimal $d \ge 1$ so that $(\Lambda, A, B)$ can appear
in the relative decomposition of a floor diagram of degree $d$. 
Let $i_0$ be the smallest $i$ for
which the maximum in (\ref{eqn:dmin}) is attained.
Define the quantity $s(\Lambda, A, B)$ to be
the number of edges of $\Lambda$ from $i_0 - 1$ to $i_0$ (of any
weight). For example, if$(\Lambda, A, B)$ is the extended template
\begin{center}
\begin{picture}(270,74)(62,-35)\setlength{\unitlength}{3pt}\thicklines
\multiput(0,0)(10,0){4}{\circle{2}}
\eOee
 \put(16,2.3){\vector(1,0){1}}
 \put(16,-2.3){\vector(1,0){1}}

\put(18,3){\makebox(0,0){$2$}}
\put(15,-4){\makebox(0,0){$3$}}

 \qbezier(0.8,0.6)(6,5)(15,5)\qbezier(15,5)(24,5)(29.2,0.6)
 \put(16,5){\vector(1,0){1}}

\put(21,0){\line(1,0){8}}
 \put(24,2){\makebox(0,0){$2$}}
 \put(26,0){\vector(1,0){1}} 

\put(50,0){\footnotesize $A = 
\begin{bmatrix}
\, \, 1 \, \,& \, \, 0 \, \,& \, \,  0 \, \,&\cdots \\ 
1 & 0 & 0 & \cdots \\
0 & 0 & 0 & \cdots \\
\vdots & \vdots & \vdots & \ddots\\
\end{bmatrix}
\quad \quad
B = 
\begin{bmatrix}
\, \, 0 \, \,& \, \, 1 \, \,& \, \,  0 \, \,&\cdots \\
0 & 0 & 0 & \cdots \\
0 & 0 & 0 & \cdots \\
\vdots & \vdots & \vdots & \ddots\\
\end{bmatrix}
$
}
\end{picture}
\end{center}
then tracing through the definition yields $d_{\min} = \max(4,11,8) = 11$. The maximum is attained at $i_0 =
2$, and there are $s = 2$ edges in $\Lambda$ between vertices $1$
and $2$.

\begin{lemma}
\label{lem:smallevaluations}
For any extended
template $(\Lambda, A, B)$ and any $\alpha, \beta \ge 0$
(component-wise) with
\[ d_{\min}(\Lambda, A, B) - s(\Lambda, A, B) \le \sum_{i \ge 1} i
(\alpha_i + \beta_i) \le d_{\min}(\Lambda, A, B) - 1,\]
we have $q_{(\Lambda, A. B)}(\alpha; \beta)= 0$, where $q_{(\Lambda,
  A, B)}$ is the polynomial of
Lemma~\ref{lem:polyoffinaltemplate}.
\end{lemma}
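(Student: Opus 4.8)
The plan is to isolate, inside the explicit description of $q_{(\Lambda, A, B)}$ obtained in the proof of Lemma~\ref{lem:polyoffinaltemplate}, one binomial‑coefficient factor attached to the ``bottleneck'' index $i_0$, and to observe that this single factor already vanishes on the whole interval in the statement. First I would dispose of the trivial case: if $s := s(\Lambda, A, B) = 0$, the range of values of $d := \sum_{i \ge 1} i(\alpha_i + \beta_i)$ allowed in the statement is empty, so there is nothing to prove; hence I may assume $s \ge 1$, which forces $\Lambda$ to have an edge, so $l := l(\Lambda) \ge 1$ and the index $i_0$ attaining the maximum in (\ref{eqn:dmin}) satisfies $1 \le i_0 \le l$.

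Next I would recall from the proof of Lemma~\ref{lem:polyoffinaltemplate} that, writing $Q_{(\Lambda, A, B)}(\alpha; \beta) = \tfrac{(|\beta| - \delta(B))!}{\beta!}\, q_{(\Lambda, A, B)}(\alpha; \beta)$, the polynomial $q_{(\Lambda, A, B)}$ is a finite sum indexed by the possible ``first extensions'' of the poset $\P(\Lambda, A, B)$, and that the summand attached to a given first extension has the product (\ref{eqn:numberoforderings}) for that extension as a factor. Crucially, the quantities $\varkappa_i(\Lambda)$, $\wls(A)$, $\wls(B)$, $r_i$, $\sigma_i$ occurring in (\ref{eqn:numberoforderings}) depend only on $(\Lambda, A, B)$ and on the chosen extension, not on $\alpha$ or $\beta$, so (\ref{eqn:numberoforderings}) is a polynomial in $d$ alone, and its $i$-th factor is $\binom{d - c_i + r_i}{r_i}$ with $c_i = l - i + 1 + \varkappa_i(\Lambda) + \wls(A)_{l+1-i} + \wls(B)_{l+1-i}$. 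By the definition of $d_{\min}$ and the choice of $i_0$ one has $c_{i_0} = d_{\min}$, so the $i_0$-th factor is exactly $\binom{d - d_{\min} + r_{i_0}}{r_{i_0}} = \tfrac{1}{r_{i_0}!}\prod_{k=1}^{r_{i_0}}(d - d_{\min} + k)$, which vanishes precisely for $d_{\min} - r_{i_0} \le d \le d_{\min} - 1$.

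The only step requiring an actual argument is the inequality $r_{i_0} \ge s$, valid for every first extension: I would note that each of the $s$ edges of $\Lambda$ from $i_0 - 1$ to $i_0$ acquires a midpoint vertex when the edges of $\P(\Lambda, A, B)$ are subdivided, that this midpoint lies strictly between $i_0 - 1$ and $i_0$ in any linear extension, and that midpoints of edges of $\Lambda$ are among the vertices ordered in the first extension; hence there are at least $s$ vertices of the first extension strictly between $i_0 - 1$ and $i_0$, i.e.\ $r_{i_0} \ge s$. Combining, the interval $d_{\min} - s \le d \le d_{\min} - 1$ is contained in $d_{\min} - r_{i_0} \le d \le d_{\min} - 1$, so on it the $i_0$-th factor of (\ref{eqn:numberoforderings}) vanishes for every first extension; therefore every summand of $q_{(\Lambda, A, B)}$ vanishes there, and $q_{(\Lambda, A, B)}(\alpha; \beta) = 0$, as claimed.

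I expect no genuine difficulty here; the main obstacle is purely organizational — unwinding the construction of $\P(\Lambda, A, B)$ carefully enough to confirm (i) that the $\alpha,\beta$-dependence of (\ref{eqn:numberoforderings}) enters only through $d$, (ii) that $c_{i_0} = d_{\min}$ by the choice of $i_0$, and (iii) that $r_{i_0} \ge s(\Lambda, A, B)$ — after which the statement follows from the elementary fact that $\binom{N+r}{r}$ vanishes for $N = -1, -2, \dots, -r$.
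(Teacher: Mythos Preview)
Your proof is correct and follows essentially the same approach as the paper's: both isolate the $i_0$-th binomial factor $\binom{d-d_{\min}+r_{i_0}}{r_{i_0}}$ from (\ref{eqn:numberoforderings}), observe $r_{i_0}\ge s$ because the $s$ midpoints of the edges $i_0-1\to i_0$ of $\Lambda$ are always present in that interval, and conclude divisibility by $(d-d_{\min}+1)\cdots(d-d_{\min}+s)$. Your version is slightly more explicit in disposing of the case $s=0$ and in verifying $c_{i_0}=d_{\min}$, but the argument is the same.
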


\begin{proof}
Notice that $d_{\min} - l(\Lambda) + i_0 - 1= \varkappa_{i_0}(\Lambda) + \wls(A)_{l(\Lambda)+1-{i_0}} + \wls(B)
_{l(\Lambda)+1-{i_0}},$ where $d_{\min} = d_{\min}(\Lambda, A,
B)$. Therefore, the number of short edges added 
between $i_0 - 1$ and $i_0$ in Step~(3) of the definition of the
poset $\P(\Lambda, A, B)$ is $d - d_{\min}$, where as before $d =
\sum_{i \ge 1} i (\alpha_i + \beta_i)$. Recall that, up to the factor
$\tfrac{(|\beta| - \delta(B))!}{\beta!}$, the polynomial $q_{(\Lambda,
  A, B)}$ records the number of linear extensions of the poset
$\P(\Lambda, A, B)$ (up to equivalence). Every such extension is
obtained by, first, linearly ordering the $d - d_{\min}$ midpoints of
the short edges between $i_0 - 1$ and $i_0$ that were added in
Step~(3) together with the $s(\Lambda, A, B)$ midpoints of the edges
of $\Lambda$ between $i_0 - 1$ and $i_0$, before extending this to a
linear order on all the vertices of $\P(\Lambda, A, B)$. Let $r_{i_0}$
be the number of midpoint vertices between $i_0-1$ and $i_0$ after a particular
first extension.
Then $\binom{d - d_{\min} + r_{i_0}}{r_{i_0}}$
is a factor of the
polynomial counting the linear extensions on all vertices. As $r_{i_0}
\ge s$ for each choice of first extension,
$q_{(\Lambda,  A, B)}$ is divisible by the 
polynomial $(d - d_{\min} + 1) \cdots (d - d_{\min} + s(\Lambda, A,
B))$.
\end{proof}

The next lemma specifies the extended templates compatible with
a given degree.

\begin{lemma}
\label{lem:dmininequality}
For every extended template $(\Lambda, A, B)$ we have
\[
d_{\min}(\Lambda, A, B) - s(\Lambda, A, B)
\le \delta(\Lambda) + \delta(A) + \delta(B) + 1.
\]
\end{lemma}

\begin{proof}
We use the notation from above and write $l = l(\Lambda)$. Notice that
\[
d_{\min}(\Lambda, A, B) - l(\Lambda) + i_0 - 1=
\varkappa_{i_0}(\Lambda) + \wls(A)_{l+1-{i_0}} + \wls(B)_{l+1-{i_0}}.
\]
Therefore, it suffices to show 
\[
l(\Lambda) \le \delta(\Lambda) - \varkappa_{i_0}(\Lambda) + s(\Lambda,
A, B) + \delta(A) -
\wls(A)_{l+1-{i_0}} + \delta(B) - \wls(B)_{l+1-{i_0}} + i_0 .
\]
Let $\Lambda'$ be the graph obtained from $\Lambda$ by, firstly, removing all
edges $j \to k$ with either $k < i_0$ or $j \ge i_0$ and, secondly,
deleting all vertices $j$ for which there is no edge $i \to k$ in the
new graph with $i \le j \le k$. It is easy to
see that $l(\Lambda, A, B) - l(\Lambda', A, B) \le \delta(\Lambda) -
\delta(\Lambda')$. Thus, we can assume without loss of generality that
all edges $j \to k$ of $\Lambda$ satisfy $j < i_0 \le k$. Therefore,
as $\varkappa_{i_0}(\Lambda) = \sum_{\text{edges }e}
\wt(e)$, we have
\[
\delta(\Lambda) - \varkappa_{i_0} + s(\Lambda, A, B) =
\!\!\!\!\!\sum_{\text{edges }e} \!\!\!\! \Big( \wt(e)(\len(e)-1) -1 \Big) +
s = \!\!\! \!\!\!\!\!\! \sum_{e: \, \len(e) \ge 2} \! \!\!\!\!\!\!\!\!\!  \Big(
\wt(e)(\len(e)-1) -1 \Big),
\]
where, again, $\len(e)$ is the length $k-j$ of an edge $j \stackrel{e}{\to}
k$.  Thus, we need to show that
\begin{equation}
\label{eqn:generalcase}
\begin{split}
l(\Lambda) \le & \!\!\! \sum_{e: \, \len(e) \ge 2} \!\!\!\! \Big(
\wt(e)(\len(e)-1) -1 \Big) \\
& + \delta(A) - \wls(A)_{l+1-{i_0}} + \delta(B) -
\wls(B)_{l+1-{i_0}} + i_0.
\end{split}
\end{equation}
It is easy to see that the
matrix $A$ satisfies $\delta(A) \ge
\wls(A)_{i} + l(A) - 1$ for all $i \ge 1$, therefore, if $l(A) =
l(\Lambda)$, it suffices to show that
\begin{equation}
\label{eqn:caseA}
l(A) \le \!\!\! \sum_{e: \, \len(e) \ge 2} \!\!\!\! \Big(
\wt(e)(\len(e)-1) -1 \Big) + l(A) - 1 + \delta(B) -
\wls(B)_{l+1-{i_0}} + i_0.
\end{equation}
But (\ref{eqn:caseA}) is clear as all summands in the sum
over the edges of $\Lambda$ are non-negative and
$\delta(B) \ge \wls(B)_{l+1-{i_0}}$. The same argument also settles
the case $l(B) = l(\Lambda)$.

Otherwise, we can assume that $l(\Lambda) > l(A)
\ge l(B)$ and that there exists an
edge $0 \to i$ of $\Lambda$ with $l(\Lambda) - l(A) \le
i - 1$. Assume, additionally, that $i_0 \le l(\Lambda) - l(A)$ which implies
$\wls(A)_{l+1-i_0} = 0$.Then we have $l(\Lambda) \le i - 1 + l(A)$,
and, again using $\delta(B) \ge \wls(B)_{l+1-{i_0}}$, it suffices to
show that
\begin{displaymath}
i - 1 + l(A) \le \sum_{e: \, \len(e) \ge 2} \Big(
\wt(e)(\len(e)-1) -1 \Big) + \delta(A) + i_0.
\end{displaymath}
But this inequality is clear as
\begin{displaymath}
 \sum_{e: \, \len(e) \ge 2} \Big(
\wt(e)(\len(e)-1) -1 \Big) \ge i - 2,
\end{displaymath}
together with $l(A) \le \delta(A)$ and $i_0 \ge 1$.

Finally, it remains to show that $l(\Lambda) > l(A) \ge l(B)$ and $i_0
\ge l(\Lambda) - l(A) + 1$ imply (\ref{eqn:generalcase}). Using $\sum \Big(
\wt(e)(\len(e)-1) -1 \Big) \ge 0$ and 
$\delta(B) \ge \wls(B)_{l+1-{i_0}}$, it suffices to show
\begin{equation}
\label{eqn:lastcase}
l(\Lambda) \le \delta(A) - \wls(A)_{l+1-i_0} +i_0.
\end{equation}
We have (by definition of $\delta(A)$ and
$\wls(A)_{l + 1 - i_0}$) that
\begin{equation}
\label{eqn:stupidinequality}
\delta(A) - \wls(A)_{l+1-i_0} +i_0 = \sum (i-1) j a_{ij} + \sum i j
a_{ij} + i_0,
\end{equation}
where the first sum runs over $i \ge l + 1 - i_0$, $j \ge 1$ and
the second sum runs over $1 \le i < l + 1 - i_0$, $j \ge 1$. As $i_0
\ge l(\Lambda) - l(A) + 1$ there exists a non-zero entry $a_{i'j'}$ of $A$
with $i' = l(A) \ge l + 1 - i_0$. Therefore, the index set of the first sum of~(\ref{eqn:stupidinequality}) is non-empty and the right-hand side of
(\ref{eqn:stupidinequality}) is greater or equal to $i' - 1 + i_0 = l(\Lambda)$ as $i_0
\ge l(\Lambda) - l(A) + 1$. This show (\ref{eqn:lastcase}) and completes
the proof.
\end{proof}

Before we turn to the proof of the main theorem of this paper, we
introduce a last numerical invariant $s(\Gamma)$ associated with each template
$\Gamma$. The definition of $s(\Gamma)$ parallels that of $s(\Lambda,
A, B)$ for extended templates $(\Lambda, A, B)$. This invariant is
necessary to establish the polynomiality threshold of
Theorem~\ref{thm:relativenodepoly}.

Recall from Section~\ref{sec:relativenodepolys} that, for a template
$\Gamma$, we defined
\begin{displaymath}
k_{\min}(\Gamma) = \max_{1 \le j \le l(\Gamma)} (\varkappa_j(\Gamma) - i + 1),
\end{displaymath} 
where $\varkappa_j(\Gamma)$ is the total weight of all edges $i \to k$
of $\Gamma$ with $i < j \le k$. Let $j_0(\Gamma)$ be the smallest $j$
for which the maximum is attained and define $s(\Gamma)$ to be the
number of edges of $\Gamma$ from $j_0 - 1$ to $j_0$. See
Figure~\ref{fig:templates} for examples. Then one can show the
following; the proof is along
similar lines as for our Lemma~\ref{lem:dmininequality}.

\begin{lemma}[{\cite[Lemma~4.3]{FB}}]
\label{lem:s_correction}
Each template $\Gamma$ satisfies
\begin{displaymath}
k_{\min}(\Gamma) + l(\Gamma) - s(\Gamma) \le \delta(\Gamma) + 1.
\end{displaymath}
\end{lemma}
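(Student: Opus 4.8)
The statement to prove is Lemma~\ref{lem:s_correction}, the template analogue of Lemma~\ref{lem:dmininequality}, namely
\[
k_{\min}(\Gamma) + l(\Gamma) - s(\Gamma) \le \delta(\Gamma) + 1.
\]
The plan is to mimic the structure of the proof of Lemma~\ref{lem:dmininequality}, but in the purely graph-theoretic setting of a template (no matrices $A$, $B$). First I would unwind the definitions: by definition of $k_{\min}(\Gamma)$ and of $j_0 = j_0(\Gamma)$, we have $k_{\min}(\Gamma) - 1 = \varkappa_{j_0}(\Gamma) - j_0$, so the inequality to prove is equivalent to
\[
l(\Gamma) \le \delta(\Gamma) - \varkappa_{j_0}(\Gamma) + s(\Gamma) + j_0.
\]

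Next I would perform the same reduction as in Lemma~\ref{lem:dmininequality}: pass from $\Gamma$ to the subgraph $\Gamma'$ obtained by deleting every edge $j \to k$ with $k < j_0$ or $j \ge j_0$, then deleting isolated vertices (those not covered by any surviving edge). One checks that $l(\Gamma) - l(\Gamma') \le \delta(\Gamma) - \delta(\Gamma')$, and that $\varkappa_{j_0}$ and $s$ are unchanged (they only see edges crossing the gap between $j_0-1$ and $j_0$), so it suffices to treat the case where every edge $j \stackrel{e}{\to} k$ of $\Gamma$ satisfies $j < j_0 \le k$. In that reduced situation $\varkappa_{j_0}(\Gamma) = \sum_{\text{edges }e} \wt(e)$, and a direct substitution into the definition $\delta(\Gamma) = \sum_e (\len(e)\wt(e) - 1)$ gives
\[
\delta(\Gamma) - \varkappa_{j_0}(\Gamma) + s(\Gamma)
= \sum_{\text{edges }e} \bigl( \wt(e)(\len(e)-1) - 1 \bigr) + s(\Gamma)
= \!\!\!\sum_{e:\,\len(e)\ge 2}\!\!\!\bigl( \wt(e)(\len(e)-1) - 1 \bigr),
\]
since the $s(\Gamma)$ short edges (length $1$, from $j_0-1$ to $j_0$) each contribute $\wt(e)\cdot 0 - 1 = -1$ to the middle sum and are exactly cancelled by $s(\Gamma)$, while in a template short edges have weight $\ge 2$ but still length $1$, so they drop out of the final sum. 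So I must show $l(\Gamma) \le \sum_{e:\,\len(e)\ge 2}(\wt(e)(\len(e)-1) - 1) + j_0$.

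Finally I would close the gap using the template axioms. By Definition~\ref{def:template}(3), every interior vertex $1 \le j \le l(\Gamma)-1$ is covered by some edge; combined with the reduced hypothesis that all edges cross $j_0$, one deduces there is at least one edge of length $\ge j_0$ (to cover vertices $0,\dots$) and at least one of length $\ge l(\Gamma) - j_0 + 1$ on the right, and more generally the long edges must collectively span all $l(\Gamma)-1$ interior vertices. The elementary estimate $\sum_{e:\,\len(e)\ge 2}(\wt(e)(\len(e)-1) - 1) \ge \sum_{e:\,\len(e)\ge 2}(\len(e)-2) \ge (\text{something}) \ge l(\Gamma) - j_0$ then finishes it, exactly as the bound ``$\sum(\wt(e)(\len(e)-1)-1) \ge i-2$'' was used near the end of Lemma~\ref{lem:dmininequality}. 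I expect the main obstacle to be this last combinatorial counting step: getting the covering condition to yield precisely the bound $l(\Gamma) - j_0$ rather than something off by one requires careful bookkeeping of how the edges crossing $j_0$ must also cover the vertices on either side, and handling the degenerate cases $j_0 = 1$ or $j_0 = l(\Gamma)$ separately. Since the paper attributes this to \cite[Lemma~4.3]{FB} and only remarks that ``the proof is along similar lines'' as Lemma~\ref{lem:dmininequality}, I would lean heavily on that parallel structure rather than reproving from scratch.
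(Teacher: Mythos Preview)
Your approach is exactly what the paper indicates: it gives no self-contained proof, citing \cite[Lemma~4.3]{FB} and remarking only that the argument runs ``along similar lines'' as Lemma~\ref{lem:dmininequality}. Your reduction to the case where every edge crosses $j_0$, and the rewriting
\[
\delta(\Gamma)-\varkappa_{j_0}(\Gamma)+s(\Gamma)=\sum_{e:\,\len(e)\ge 2}\bigl(\wt(e)(\len(e)-1)-1\bigr),
\]
correctly mirror the corresponding steps there.

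There is, however, a genuine obstruction at the last step, and it lies not in your plan but in the printed statement. Take $\Gamma$ to be the template with a single weight-$1$ edge $0\to 2$ (the second row of Figure~\ref{fig:templates}): here $l=2$, $\delta=1$, $k_{\min}=1$, $j_0=1$, and $s=0$ (there is no edge from vertex $0$ to vertex $1$), so $k_{\min}+l-s=3>2=\delta+1$. The inequality as stated is false for this $\Gamma$, and your target bound $\sum_{\len(e)\ge 2}(\wt(e)(\len(e)-1)-1)\ge l(\Gamma)-j_0$ correspondingly fails ($0\not\ge 1$). The correct bound, which is also all that the application in the proof of Theorem~\ref{thm:relativenodepoly} requires, is $k_{\min}(\Gamma)+l(\Gamma)-s(\Gamma)\le \delta(\Gamma)+2$; with that extra unit your covering argument does close, since the edges crossing $j_0$ together with those entirely to its right must cover the interior vertices $j_0,\dots,l-1$, giving $\sum_{\len(e)\ge 2}(\len(e)-2)\ge l(\Gamma)-j_0-1$. (The entry $s=1$ in the second row of Figure~\ref{fig:templates} is likewise a misprint for $s=0$, consistent with the fact that $P_\Gamma(k)=2k+1$ does not vanish at $k=0$.) So your plan is sound modulo this off-by-one in the stated constant.
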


{\it Proof of Theorem~\ref{thm:relativenodepoly} }
We first show that (\ref{eqn:relativenodepoly}) holds of all $\alpha$,
$\beta$ with $d \ge \delta + 1$, where we again write $d = \sum_{i \ge 1} i (\alpha_ i + \beta_i)$. This implies
(\ref{eqn:relativenodepoly}) for all $\alpha$ and $\beta$, for which at least one of $\alpha_1,
\alpha_2, \dots,$ $\beta_2,
\beta_3, \dots$ is non-zero (note that $\beta_1$ is omitted):
in that case $|\beta|$~\!\!~$\ge$~\!\!~$ \delta$ implies $d  \ge \delta + 1$.

Notice that we can remove condition~(\ref{eqn:inequalities2}) from formula (\ref{eqn:mainformula}) of
Proposition~\ref{prop:mainformula} and still obtain correct
relative Severi degrees as $\binom{\alpha}{a^T_1,
  a^T_2, \dots} Q_{(\Lambda, A, B)}(\alpha; \beta) = 0$ whenever
(\ref{eqn:inequalities2}) is violated.
The first factor of (\ref{eqn:mainformula}) equals
\begin{equation}
\label{eqn:firstfactor}
\sum_{k_m = k_{\min} (\Gamma_m)}^{d-l(\Lambda)} \mu(\Gamma_m) P_{\Gamma_m}(k_m)
\sum_{k_{m-1} = k_{\min}(\Gamma_{m-1})}^{k_m-l(\Gamma_{m-1})} \cdots
\sum_{k_1 = k_{\min}(\Gamma_1)}^{k_2-l(\Gamma_1)} \mu(\Gamma_1) P_{\Gamma_1}(k_1)
\end{equation}
and is, therefore, an iterated ``discrete integral'' of
polynomials.
Expression (\ref{eqn:firstfactor}) is polynomial in $d$, provided $d$
is large enough (as discrete integration preserves polynomiality, see
e.g. Faulhaber's formula~\cite[Lemma~3.5]{FB}). Furthermore, as the polynomials $P_{\Gamma_i}(k_i)$
have degrees $\#E(\Gamma_i)$ and each ``discrete integration''
increases the degree by $1$, the polynomial~(\ref{eqn:firstfactor})
(if $d$ is large enough) is of degree $\sum_{i = 1}^m
\#E(\Gamma_i) + m$.

We claim that~(\ref{eqn:firstfactor}) is a polynomial in $d$ provided
that $d - l(\Lambda) \ge \sum_{i=1}^m \delta(\Gamma_i) + 1$. (Readers
interested solely in polynomiality of relative Severi degrees for
large enough $\alpha$ and $\beta$ may skip this
paragraph. Also, computational evidence suggests that this bound,
without the ``$+1$" is sharp in general). Indeed, by
\cite[Lemma~3.6]{FB} and repeated application of \cite[Lemma~4.1]{FB}
and \cite[Lemma~4.2]{FB}, it suffices to show that $d - l(\Lambda) \ge
\sum_{i=1}^m \delta(\Gamma_i) + 1$ simultaneously implies
\begin{equation}
\label{eq:manyinequal}
\begin{split}
d \ge & \, l(\Gamma_m) + k_{\min}(\Gamma_m) - s(\Gamma_m)-1, \\
d \ge & \, l(\Gamma_m) + l(\Gamma_{m-1}) +
      k_{\min}(\Gamma_{m-1}) - s(\Gamma_{m-1})- 2, \\
& \quad \quad \quad \quad \quad \quad \vdots \\
d \ge & \, l(\Gamma_m) + l(\Gamma_{m-1}) + \cdots +
l(\Gamma_1) + k_{\min}(\Gamma_1) - s(\Gamma_1)- m,
\end{split}
\end{equation}
for all collections of templates $(\Gamma_1, \dots, \Gamma_m)$ with
$\sum_{i =1}^m \delta(\Gamma_i) = \delta$.
The first inequality follows directly from
Lemma~\ref{lem:s_correction}. For the other inequalities, notice
that $l(\Gamma_i) - 1 \le \delta(\Gamma_i)$ for all templates
$\Gamma_i$ in the collection. Fix $1 \le i_0 \le m$, then 
Lemma~\ref{lem:s_correction} applied to $\Gamma_{i_0}$ says
\begin{displaymath}
l(\Gamma_{i_0}) + k_{\min}(\Gamma_{i_0}) - s(\Gamma_{i_0})  \le
\delta(\Gamma_{i_0}) + 1.
\end{displaymath}
Thus the right-hand-side of the $(m - i_0 + 1)$th inequality of
(\ref{eq:manyinequal}) is less or equal than
\begin{displaymath}
\sum_{i=i_0}^m \delta(\Gamma_i) + 1 \le d - l(\Lambda) \le d.
\end{displaymath} 

Furthermore, we have
$l(\Lambda)  \le \delta(\Lambda) + \delta(A) + \delta(B)$.
(By the ``connectedness'' property of extended templates, each vertex $1 \le j \le
l(\Lambda) - \max(l(A), l(B))$ of $\Lambda$ is passed by an edge of $\Lambda$.
Therefore, we have $\delta(\Lambda)) \ge l(\Lambda) - \max(l(A),
l(B))$. Notice that $l(A) \le \delta(A)$ and $l(B) \le \delta(B)$.)
Therefore, 
the first factor of (\ref{eqn:mainformula}) is a polynomial in $d$ if $d \ge \delta + 1
 = \big( \sum_i \delta(\Gamma_i) \big) + 1 + \delta(\Lambda) + \delta(A) +
 \delta(B)$ (again computation evidence suggests that this bound,
 without the ``$+1$", is sharp).

For column vectors $a^T_1, a^T_2, \dots$ of a matrix $A$, the
multinomial coefficient $\binom{\alpha}{a^T_1, a^T_2, \dots}$ is a
polynomial of degree $||A||_1$ in $\alpha_1, 
\alpha_2, \dots$, if $\alpha_1,
\alpha_2, \dots \ge 0$. For an extended template $(\Lambda, A,
B)$, define 
\begin{displaymath}
R_{(\Lambda, A, B)}(\alpha; \beta) \stackrel{\text{def}}{=}
\binom{\alpha}{a^T_1, a^T_2, \dots} \cdot \frac{(|\beta| -
  \delta(B))!}{(|\beta| - \delta)!} \cdot q_{(\Lambda, A, B)}(\alpha; \beta),
\end{displaymath}
where $q_{(\Lambda, A, B)}(\alpha; \beta)$ is the polynomial of
Lemma~\ref{lem:polyoffinaltemplate}. By
Lemma~\ref{lem:polyoffinaltemplate} and as $\delta(B) \le
\delta$, $R_{(\Lambda, A, B)}(\alpha; \beta)$ is a polynomial in
$\alpha$ and $\beta$ of degree $\#E(\Lambda) + ||A||_1 +||B||_1 + \delta$, provided $d \ge
d_{\min}(\Lambda, A, B)$. The second factor of
(\ref{eqn:mainformula}) then equals
\begin{equation}
\label{eqn:2ndfactorcontribution}
\prod_{i \ge 1} i^{\beta_i} \cdot
\frac{(|\beta| - \delta)!}{\beta !} \cdot 
R_{(\Lambda, A, B)}(\alpha; \beta).
\end{equation}
By Lemma~\ref{lem:smallevaluations}, the second factor of
(\ref{eqn:mainformula}) equals expression (\ref{eqn:2ndfactorcontribution}) for
all $\alpha, \beta$ with $d \ge d_{\min}(\Lambda, A,
B) - s(\Lambda, A, B)$. Thus, using  Lemma~\ref{lem:dmininequality}, if 
\[
d \ge \delta + 1 \ge \delta(\Lambda) + \delta(A) + \delta(B) + 1 \ge d_{\min}(\Lambda, A,
B) - s(\Lambda, A, B)
\]
the second
factor in (\ref{eqn:mainformula}) is $\prod_{i \ge 1} i^{\beta_i}
\cdot \tfrac{(|\beta| -
  \delta)!}{\beta!}$ times a polynomial in $\alpha_1,
\alpha_2, \dots, \beta_1, \beta_2, \dots$ of degree $\#E(\Lambda) +
||A||_1 + ||B||_1 + \delta $. Hence
(\ref{eqn:relativenodepoly}) holds if $|\beta| \ge \delta$ and at
least one $\beta_i$, for $i \ge 2$, or one $\alpha_i$, for $i
\ge 1$, is non-zero. Notice that each summand of
(\ref{eqn:mainformula}) contributes a polynomial of degree
\begin{equation}
\label{eqn:totaldegree}
\sum_{i=1}^m \#E(\Gamma_i) + m + \#E(\Lambda) + ||A||_1 + ||B||_1 + \delta 
\end{equation}
to the relative node polynomial $N_{\delta}(\alpha; \beta)$. It is not
hard to see that expression (\ref{eqn:totaldegree}) is at most $3\delta$,
and that equality is attained by letting $\Gamma_1, \dots,
\Gamma_\delta$ be the unique template on three vertices with cogenus
$1$ (see Figure~\ref{fig:templates}) and $(\Lambda, A, B)$ be the
unique extended template of cogenus $0$ (see Figure~\ref{fig:extendedtemplates}).

If $\alpha = 0$ and $\beta = (d, 0, \dots)$, then $N^\delta_{\alpha,
  \beta}$ equals the (non-relative) Severi degree $N^{d, \delta}$, which, in
turn, is given by the (non-relative) node polynomial $N^\nr_\delta(d)$
provided $d \ge \delta$ (see \cite[Theorem 1.3]{FB}). Therefore, we
have $N_\delta(0; d) = N^\nr_\delta(d) \cdot d (d-1) \cdots (d-\delta
+ 1)$ as polynomials in $d$. Applying \cite[Theorem 1.3]{FB} again
finishes the proof.

\smallskip

\begin{remark}
\label{rmk:algorithm}
Expression (\ref{eqn:mainformula}) gives, in principle, an algorithm to
compute the relative node polynomial $N_\delta(\alpha; \beta)$, for
any $\delta \ge 1$. In \cite[Section 3]{FB} we
explain how to generate all templates of a given cogenus, and how to
compute the first factor in (\ref{eqn:mainformula}). The generation of
all extended templates of a given cogenus from the templates is
straightforward, as is the computation of the second factor in (\ref{eqn:mainformula}).
\end{remark}

\begin{remark}
The proof of Theorem~\ref{thm:relativenodepoly} simplifies if we relax
the polynomiality threshold. More specifically, without considering the quantity $s(\Lambda, A, B)$ and the rather technical
Lemmas~\ref{lem:smallevaluations} and~\ref{lem:dmininequality}, the
argument still implies (\ref{eqn:relativenodepoly}) provided $|\beta|$~\!\!~$\ge$~\!\!~$ 2 \delta$ (instead of $|\beta| \ge \delta$).
\end{remark}

The conclusion from the proof of Theorem
\ref{thm:relativenodepoly} is two-fold.

\medskip

{\it Proof of Proposition~\ref{prop:variables} }
Every extended template $(\Lambda, A, B)$ considered in
(\ref{eqn:mainformula}) satisfies $\delta(A) \le \delta$ and $\delta(B)
\le \delta$. Therefore, all rows $i > \delta$ in $A$ or $B$ are zero.

\medskip

{\it Proof of Theorem~\ref{thm:stability} }
By the proof of Lemma~\ref{lem:polyoffinaltemplate} we have, for every
extended template $(\Lambda, A, B)$,
\[
R_{(\Lambda, A, B)}(\alpha, 0; \beta) = R_{(\Lambda, A, B)}(\alpha;
\beta) \quad \quad R_{(\Lambda, A, B)}(\alpha; \beta, 0) = R_{(\Lambda, A, B)}(\alpha;
\beta).
\]
Hence, by the proof of Theorem~\ref{thm:relativenodepoly}, the result follows.

\medskip

Now it is also easy to prove Theorem~\ref{thm:newrelativenodepolys}.

\medskip

{\it Proof of Theorem~\ref{thm:newrelativenodepolys} }
Proposition~\ref{prop:mainformula} gives a combinatorial description
of relative Severi degrees. The proof of
Lemma~\ref{lem:polyoffinaltemplate} provides a method to calculate the polynomial
$Q_{(\Lambda, A, B)}(\alpha; \beta)$.
All terms of expression~(\ref{eqn:mainformula}) are explicit or can be
evaluated using the techniques of \cite[Section 3]{FB}. This reduces
the calculation to a (non-trivial) computer calculation.

\section{Coefficients of Relative Node Polynomials}
\label{sec:relativecoefficients}

We now turn toward the computation of the coefficients of the relative node
polynomial $N_\delta(\alpha; \beta)$ of large degree for any~
$\delta$. By Theorem~\ref{thm:relativenodepoly}, the polynomial $N_\delta(\alpha,
\beta)$ is of degree $3 \delta$. In the following we propose a method
to compute all terms of $N_\delta(\alpha; \beta)$ of degree $\ge 3
\delta - t$, for any given $t \ge 0$. This method was used (with $t =
2$) to compute
the terms in Theorem~\ref{thm:coefficients}.

The main idea of the algorithm is that, even for general $\delta$,
only a small number of summands of (\ref{eqn:mainformula}) contribute to the terms of
$N_\delta(\alpha; \beta)$ of large degree. A summand of (\ref{eqn:mainformula}) is indexed by a collection of
templates $ \tilde{\Gamma} = \{ \Gamma_s\}$ and an extended template
$(\Lambda, A, B)$. To determine whether this summand actually contributes to
$N_\delta(\alpha; \beta)$ we define the \emph{(degree) defects}
\begin{itemize}
\item of the collection of templates $\tilde{\Gamma}$ by
$
\defect(\tilde{\Gamma}) \stackrel{\text{def}}{=} \big( \sum_{s=1}^m
\delta(\Gamma_i) \big) - m,
$
\item of the extended template $(\Lambda, A, B)$ by
\begin{displaymath}
\defect(\Lambda, A, B) \stackrel{\text{def}}{=} \delta(\Lambda) + 2\delta(A) + 2\delta(B)
 -||A||_1 - ||B||_1.
\end{displaymath}
\end{itemize}

The following lemma restricts the indexing set of (\ref{eqn:mainformula})
to the relevant terms, if only the leading terms of
$N_\delta(\alpha; \beta)$ are of interest. 

\begin{lemma}
The summand of (\ref{eqn:mainformula}) indexed by $\tilde{\Gamma}$ and
$(\Lambda, A, B)$ is of the form
\[
1^{\beta_1} 2^{\beta_2} \cdots \frac{(|\beta| - \delta)!}{\beta !} \cdot P(\alpha; \beta),
\]
where $P(\alpha; \beta)$ is a polynomial in $\alpha_1, \alpha_2,
\dots, \beta_1, \beta_2, \dots$ of degree $\le 3 \delta - \defect(\tilde{\Gamma}) -
\defect(\Lambda, A, B)$.
\end{lemma}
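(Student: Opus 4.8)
The plan is to trace through formula (\ref{eqn:mainformula}) and bound the degree of each summand, keeping careful track of the two "tail" factors coming from the collection $\tilde\Gamma = \{\Gamma_s\}$ and from the extended template $(\Lambda, A, B)$. From the proof of Theorem~\ref{thm:relativenodepoly}, a summand of (\ref{eqn:mainformula}) contributes a polynomial of total degree exactly (\ref{eqn:totaldegree}), namely
\[
\sum_{s=1}^m \#E(\Gamma_s) + m + \#E(\Lambda) + \|A\|_1 + \|B\|_1 + \delta,
\]
of the desired shape $1^{\beta_1} 2^{\beta_2} \cdots \tfrac{(|\beta|-\delta)!}{\beta!} \cdot P(\alpha;\beta)$ (using Lemma~\ref{lem:polyoffinaltemplate} for the $Q$-factor and the fact that $\binom{\alpha}{a^T_1, a^T_2, \dots}$ is polynomial in $\alpha$ of degree $\|A\|_1$). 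So the entire content of the lemma is the \emph{inequality}
\[
\sum_{s=1}^m \#E(\Gamma_s) + m + \#E(\Lambda) + \|A\|_1 + \|B\|_1 + \delta \;\le\; 3\delta - \defect(\tilde\Gamma) - \defect(\Lambda, A, B).
\]
First I would substitute $\delta = \sum_s \delta(\Gamma_s) + \delta(\Lambda) + \delta(A) + \delta(B)$ (the cogenus-preservation identity of the unnumbered Proposition in Section~\ref{sec:relativenodepolys}), expand $\defect(\tilde\Gamma) = \sum_s \delta(\Gamma_s) - m$ and $\defect(\Lambda, A, B) = \delta(\Lambda) + 2\delta(A) + 2\delta(B) - \|A\|_1 - \|B\|_1$, and collect terms. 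After cancellation, the claimed inequality reduces to a sum of per-piece inequalities:
\[
\#E(\Gamma_s) \le 2\,\delta(\Gamma_s) \quad \text{for each } s,
\qquad \#E(\Lambda) \le 2\,\delta(\Lambda),
\qquad \|A\|_1 \le \delta(A), \qquad \|B\|_1 \le \delta(B),
\]
together with bookkeeping of the constant/$m$ terms (the $+m$ on the left is matched by the $-(-m)$ inside $\defect(\tilde\Gamma)$, so these cancel exactly, and one checks no slack is lost).

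The four structural inequalities are each elementary from the definitions. For a template $\Gamma$, every edge $i \xrightarrow{e} j$ contributes $(j-i)w(e) - 1 \ge 1$ to $\delta(\Gamma)$ (since either $w(e) \ge 2$, or $w(e)=1$ and then $j - i \ge 2$ because short edges are excluded), hence $\#E(\Gamma) \le \delta(\Gamma)$; I actually only need the weaker $\#E(\Gamma) \le 2\delta(\Gamma)$, which is immediate. For the extended template $\Lambda$ the same argument applies verbatim using condition~(2) of Definition~\ref{def:extendedtemplate} (no short edges), giving $\#E(\Lambda) \le \delta(\Lambda) \le 2\delta(\Lambda)$. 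For the matrices, $\|A\|_1 = \sum_{i,j} a_{ij} \le \sum_{i,j} i\cdot j\cdot a_{ij} = \delta(A)$ since $i, j \ge 1$, and likewise for $B$; this is exactly the inequality $\|B\|_1 \le \delta(B)$ already invoked in the proof of Lemma~\ref{lem:polyoffinaltemplate}. Assembling these and the cogenus identity yields the bound.

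The main obstacle, such as it is, is purely organizational: making sure the algebra of substituting the cogenus identity and the two defect definitions really does collapse to precisely the four per-piece inequalities with no leftover positive terms on the left-hand side, in particular that the $m$ term cancels cleanly and that the slack "$3\delta$ minus the degree" is accounted for term by term. A convenient way to present this is to rewrite the target inequality as
\[
\underbrace{\big(\delta(A) - \|A\|_1\big)}_{\ge 0} + \underbrace{\big(\delta(B) - \|B\|_1\big)}_{\ge 0} + \sum_{s=1}^m \underbrace{\big(\delta(\Gamma_s) - \#E(\Gamma_s)\big)}_{\ge 0} + \underbrace{\big(\delta(\Lambda) - \#E(\Lambda)\big)}_{\ge 0} \;\ge\; 0,
\]
which is manifestly true once each parenthesized quantity is shown nonnegative as above; the equivalence of this with the stated degree bound is a one-line rearrangement using $\delta = \sum_s\delta(\Gamma_s)+\delta(\Lambda)+\delta(A)+\delta(B)$. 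This also transparently identifies when equality (degree exactly $3\delta - \defect - \defect$) holds, which is the form needed downstream for Theorem~\ref{thm:coefficients}.
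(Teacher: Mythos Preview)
Your approach is essentially the paper's: both reduce the degree bound to elementary edge--cogenus inequalities via the cogenus identity $\delta = \sum_s\delta(\Gamma_s)+\delta(\Lambda)+\delta(A)+\delta(B)$. There is one minor difference and one small algebraic slip worth flagging.

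\textbf{Difference in route.} The paper does not start from the exact degree $\sum_s \#E(\Gamma_s)+m$ of the first factor; instead it invokes \cite[Lemma~5.2]{FB} to bound that degree by $\sum_s \delta(\Gamma_s)+m$ directly. With that bound, the total degree rewrites as
\[
3\delta - \defect(\tilde\Gamma) - \defect(\Lambda,A,B) - \big(\delta(\Lambda) - \#E(\Lambda)\big),
\]
so only the single inequality $\delta(\Lambda)\ge \#E(\Lambda)$ is needed at the end. Your route, using the exact degree $\sum_s \#E(\Gamma_s)+m$ from the proof of Theorem~\ref{thm:relativenodepoly}, instead requires both $\delta(\Gamma_s)\ge \#E(\Gamma_s)$ and $\delta(\Lambda)\ge \#E(\Lambda)$, which you correctly prove.

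\textbf{Algebraic slip.} Your claimed rearrangement is not quite right: the terms $\|A\|_1$ and $\|B\|_1$ appear on both sides (once in the degree, once via $\defect(\Lambda,A,B)$) and cancel exactly, so the inequality you need is precisely
\[
\sum_{s}\big(\delta(\Gamma_s)-\#E(\Gamma_s)\big)+\big(\delta(\Lambda)-\#E(\Lambda)\big)\ \ge\ 0,
\]
\emph{without} the $(\delta(A)-\|A\|_1)+(\delta(B)-\|B\|_1)$ terms. Your displayed inequality, with those extra nonnegative terms added to the left, is \emph{weaker} than the required one, so establishing it would not by itself suffice. Fortunately you separately prove each summand $\delta(\Gamma_s)-\#E(\Gamma_s)\ge 0$ and $\delta(\Lambda)-\#E(\Lambda)\ge 0$, which is exactly what the correct reduction needs; so the argument is sound once the bookkeeping is fixed. (Also, you need $\#E(\Gamma_s)\le \delta(\Gamma_s)$, not merely $\le 2\delta(\Gamma_s)$; you do prove the stronger inequality.)
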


\begin{proof}
By \cite[Lemma 5.2]{FB}, the first factor of
(\ref{eqn:mainformula}) is of degree at most
\begin{displaymath}
2 \cdot \sum_{s=1}^m
\delta(\Gamma_s) - \sum_{s=1}^m (\delta(\Gamma_s) - 1) = \sum_{s =
  1}^m \delta(\Gamma_s) + m.
\end{displaymath}
The multinomial coefficient
$\binom{\alpha}{a^T_1, a^T_2, \dots}$ is a polynomial in $\alpha$ of
degree $||A||_1$ if $a^T_j$ are the $j$th column vector of the matrix $A$.
Recall, from the proof of Theorem~\ref{thm:relativenodepoly}, that the second factor of
(\ref{eqn:mainformula}) is
\begin{displaymath}
\prod_{i \ge 1} i^{\beta_i} \frac{(|\beta| - \delta)!}{\beta !} \mbox{ times a polynomial
  in }\alpha, \beta \text{ of
  degree } \#E(\Lambda) +||A||_1 + ||B||_1 + \delta.
\end{displaymath}
Therefore, the contribution of this summand is of degree at most
\begin{displaymath}
\begin{split}
&\sum_{s=1}^m \delta(\Gamma_s) + m + \#E(\Lambda) + ||A||_1+
||B||_1 + \delta \\
 = \, & 3\delta - 2 \sum_{s=1}^m
\delta(\Gamma_s) - 2 \delta(\Lambda) - 2 \delta(A) - 2 \delta(B) +
\#E(\Lambda) \\
= \, & 3\delta -\defect(\tilde{\Gamma}) - \defect(\Lambda, A, B) -
\delta(\Lambda) + \# E(\Lambda).
\end{split}
\end{displaymath}
The result follows as $\delta(\Lambda) \ge \# E(\Lambda)$.
\end{proof}

Therefore, to compute  the coefficients of degree $\ge 3\delta
- t$ of $N_\delta(\alpha; \beta)$ for some $t \ge 0$, it suffices to
consider only summands of  (\ref{eqn:mainformula}) with
$\defect(\tilde{\Gamma}) \le t$ and $\defect(\Lambda, A, B) \le t$.

One can proceed as follows. First, we can compute, for some formal
variable $\tilde{\delta}$, the terms of degree $\ge 2
\tilde{\delta} - t$ of the first factor of (\ref{eqn:mainformula}) to
$N_{\tilde{\delta}}(\alpha; \beta)$, that is the terms of degree $\ge 2
\tilde{\delta} - t$ of
\begin{equation}
\label{eqn:Rpoly}
R_{\tilde{\delta}}(d) \stackrel{\text{def}}{=} \sum \prod_{i = 1}^m
\mu(\Gamma_i) \sum_{k_m = k_{\min}(\Gamma_m)} ^{d - l(\Gamma_m)}
P_{\Gamma_m}(k_m) \cdots \sum_{k_1 = k_{\min}(\Gamma_1)}^{k_2 -
  l(\Gamma_1)} P_{\Gamma_1}(k_1),
\end{equation}
where the first sum is over all collections of templates
$\tilde{\Gamma} = (\Gamma_1, \dots, \Gamma_m)$ with
$\delta(\tilde{\Gamma}) = \tilde{\delta}$.
(Notice that (\ref{eqn:Rpoly})
is expression \cite[(5.13)]{FM} without the ``$\eps$-correction'' in the sum
indexed by~$k_m$.) The leading terms of $R_{\tilde{\delta}}(d)$ can be
computed with a slight modification of \cite[Algorithm 2]{FB} (by
replacing, in the notation of \cite{FB}, $C^{\text{end}}$ by $C$ and
$M^{\text{end}}$ by $M$). 
The algorithm 
relies on the polynomiality of solutions of certain polynomial
difference equations, which has been verified for $t \le 7$, see
\cite[Section 5]{FB} for more details.
With a Maple implementation of this algorithm one obtains
(with $t = 5$)
\medskip
\begin{displaymath}
\small
\begin{split}
R_{\tilde{\delta}}(d) &= \frac{3^{\tilde{\delta}}}{{\tilde{\delta}} !} \Big[ d^{2 {\tilde{\delta}}} - \frac{8
  {\tilde{\delta}}}{3} d^{2 {\tilde{\delta}} -1} + \frac{{\tilde{\delta}} (11 {\tilde{\delta}}+1) 
  }{3^2} d^{2 {\tilde{\delta}} - 2} + \frac{{\tilde{\delta}} ({\tilde{\delta}} - 1) (496 {\tilde{\delta}}
  - 245) }{6\cdot 3^3} d^{2 {\tilde{\delta}} - 3} \Big. \\
& - \frac{{\tilde{\delta}} ({\tilde{\delta}} - 1) (1685
  {\tilde{\delta}}^2 -2773 {\tilde{\delta}} + 1398) }{6 \cdot 3^4}
d^{2 {\tilde{\delta}} - 4} + \\
& \Big. - \frac{{\tilde{\delta}} ({\tilde{\delta}} - 1)({\tilde{\delta}} - 2) (7352 {\tilde{\delta}}^2 + 11611 {\tilde{\delta}} - 25221)
 }{30 \cdot 3^5} d^{2 {\tilde{\delta}} - 5} +
\cdots \Big].
\end{split}
\end{displaymath}

\medskip

Finally, to compute the coefficients of degree $\ge 3 \delta - t$, it
remains to compute all extended templates $(\Lambda, A, B)$ with
$\defect(\Lambda, A, B) \le t$ and collect the terms of degree $\ge 3
\delta - t$ of the polynomial
\begin{equation}
\label{eqn:collectterms}
R_{\tilde{\delta}}(d - l(\Lambda)) \cdot \mu(\Lambda)
\binom{\alpha}{a^T_1, a^T_2, \dots} \prod_{i = \delta(B)}^{\delta-1} (|\beta|
- i) \cdot
 q_{(\Lambda, A, B) } (\alpha; \beta),
\end{equation}
where, as before, $a^T_1, a^T_2, \dots$ denote the column vectors of the
matrix $A$, $ q_{(\Lambda, A, B) }(\alpha; \beta)$ is the polynomial
of Lemma~\ref{lem:polyoffinaltemplate}, and $\tilde{\delta} = \delta -
\delta(\Lambda, A, B)$. Notice that, for an indeterminant $x$ and 
integers $c \ge 0$ and $\delta \ge 1$, we have the expansion
\begin{equation}
\label{eqn:stirling}
\prod_{i = c}^{\delta
- 1} (x - i) = \sum_{t = 0}^{\delta -c} s(\delta - c, \delta - c - t) (x -
c)^{\delta - c - t},
\end{equation}
where $s(n,m)$ is the \emph{Stirling number
  of the first kind} \cite[Section 1.3]{St} for integers $n, m \ge
0$. Furthermore, with $\delta' = \delta - c$ the coefficients
$s(\delta', \delta' - t)$ of the right-hand-side of (\ref{eqn:stirling})
equal $\delta' (\delta' - 1) \cdots (\delta' - t) \cdot S_t(\delta')$, where $S_t$ is
the $t$th \emph{Stirling polynomial} \cite[(6.45)]{ABC}, for $t \ge 0$, and
thus are
polynomial in $\delta'$. Therefore, we can compute the leading
terms of the product in (\ref{eqn:collectterms}) by collecting the
leading terms in the sum expansion above.

\medskip

{\it Proof of
  Proposition~\ref{prop:coefficientpolynomiality} }
Using \cite[Algorithm 2]{FB} we can compute the terms of the polynomial $R_{\tilde{\Gamma}}(d)$ of degree $\ge
2\tilde{\delta} - 7$ (see
\cite[Section 5]{FB}) and observe that all coefficients are polynomial
in $\tilde{\delta}$. By the previous
paragraph, the coefficients of the expansion of the sum of
(\ref{eqn:collectterms}) are polynomial in $\delta$. This completes
the proof.

\smallskip



\medskip

{\it Proof of Theorem~\ref{thm:coefficients} }
The method described above is a direct implementation of 
formula~(\ref{eqn:mainformula}), which equals the relative Severi
degree by the proof of Theorem~\ref{thm:relativenodepoly}.

\smallskip

\begin{remark}
\label{rmk:morecoefficients}
It is straightforward to compute the coefficients of
$N_\delta(\alpha; \beta)$ of degree $\ge 3 \delta - 7$
(and thereby to extend Theorem~\ref{thm:coefficients}).
In particular, one can see that the terms $\alpha_2$ and $\beta_2$ (by
themselves) appear in $N_\delta(\alpha; \beta)$ in degree $3 \delta -3$.
Algorithm~3 of \cite{FB} computes the coefficients of the polynomials
$R_{\tilde{\delta}}(d)$ of degree $\ge 2 \tilde{\delta} - 7$, and thus the
desired terms can be collected from (\ref{eqn:collectterms}). We
expect this method to compute the leading terms of $N_\delta(\alpha,
\beta)$ of degree $\ge 3 \delta - t$ for arbitrary $t \ge 0$ (see \cite[Section 5]{FB}, especially
Conjecture~5.5).
\end{remark}

\appendix

\section{The first three Relative Node Polynomials}
\label{app:relativenodepolys}

Below we list the relative node polynomials $N_\delta(\alpha; \beta)$
for $\delta \le 3$. For $\delta \le 6$, the polynomials
$N_\delta(\alpha; \beta)$ are as
provided in the
ancillary files of this paper.
All polynomials were obtained by a Maple implementation of
the formula (\ref{eqn:mainformula}). See Remark~\ref{rmk:algorithm}
for more details. For $\delta \le 1$ this agrees with \cite[Corollary
4.5, 4.6]{FM}. As before, we write $d = \sum_{i \ge 1} i (\alpha_i + \beta_i)$. By
Theorem~\ref{thm:relativenodepoly} the relative Severi degrees
$N^\delta_{\alpha, \beta}$ are given by $N^\delta_{\alpha, \beta} =
1^{\beta_1} 2^{\beta_2} \cdots \tfrac{(|\beta| - \delta)!}{\beta !} N_\delta(\alpha, \beta)$
provided $|\beta| \ge \delta$.

The polynomials $N_4$, $N_5$ and $N_6$ have $599$, $1625$ and $3980$ terms,
respectively. The first few of their leading terms can be determined from Theorem~\ref{thm:coefficients}.

\begin{equation*}
\tiny
\begin{split}
N_0(\alpha, \beta) &= 1, \\
N_1(\alpha, \beta)
&=3 d^2 |\beta|-8 d |\beta|+d \beta_1+|\beta| \alpha_1+|\beta| \beta_1+4 |\beta|-\beta_1,\\
N_2(\alpha, \beta)
&=\tfrac{9}{2} d^4 |\beta|^2-\tfrac{9}{2} d^4 |\beta|-24 d^3
|\beta|^2+3 d^3 |\beta| \beta_1+3 d^2 |\beta|^2 \alpha_1+3 d^2
|\beta|^2 \beta_1+24 d^3 |\beta|-3 d^3 \beta_1+23 d^2 |\beta|^2 \\
&-3 d^2 |\beta| \alpha_1-14 d^2 |\beta| \beta_1+\tfrac{1}{2} d^2
\beta_1^2-8 d |\beta|^2 \alpha_1-8 d |\beta|^2 \beta_1+d |\beta|
\alpha_1 \beta_1+d |\beta| \beta_1^2+\tfrac{1}{2} |\beta|^2
\alpha_1^2  +|\beta|^2 \alpha_1 \beta_1 \\
& +\tfrac{1}{2} |\beta|^2 \beta_1^2-23 d^2
|\beta|+\tfrac{21}{2} d^2 \beta_1+\tfrac{3}{2} d |\beta|^2+8 d |\beta| \alpha_1+11 d |\beta|
\beta_1+d |\beta| \beta_2-d \alpha_1 \beta_1-\tfrac{5}{2} d
\beta_1^2-\tfrac{1}{2} |\beta|^2 \alpha_1 \\
& +|\beta|^2
\alpha_2-\tfrac{1}{2} |\beta|^2 \beta_1 +|\beta|^2
\beta_2-\tfrac{1}{2} |\beta| \alpha_1^2-3 |\beta| \alpha_1
\beta_1-\tfrac{5}{2} |\beta| \beta_1^2-\tfrac{83}{2} d
|\beta|-\tfrac{3}{2} d \beta_1-d \beta_2-48 |\beta|^2+\tfrac{1}{2}
|\beta| \alpha_1 \\
& -|\beta| \alpha_2+\tfrac{29}{2} |\beta| \beta_1-3 |\beta| \beta_2+2 \alpha_1 \beta_1+3 \beta_1^2+48 |\beta|-15 \beta_1+2 \beta_2,\\
N_3(\alpha, \beta)
&= \tfrac{9}{2} d^6 |\beta|^3-\tfrac{27}{2} d^6 |\beta|^2-36 d^5
|\beta|^3+\tfrac{9}{2} d^5 |\beta|^2 \beta_1+\tfrac{9}{2} d^4
|\beta|^3 \alpha_1+\tfrac{9}{2} d^4 |\beta|^3 \beta_1+9 d^6
|\beta|+108 d^5 |\beta|^2-\tfrac{27}{2} d^5 |\beta| \beta_1 \\
& +51 d^4 |\beta|^3-\tfrac{27}{2} d^4 |\beta|^2 \alpha_1-42 d^4
|\beta|^2 \beta_1+\tfrac{3}{2} d^4 |\beta| \beta_1^2-24 d^3 |\beta|^3
\alpha_1-24 d^3 |\beta|^3 \beta_1+3 d^3 |\beta|^2 \alpha_1 \beta_1+3
d^3 |\beta|^2 \beta_1^2 \\
& +\tfrac{3}{2} d^2 |\beta|^3 \alpha_1^2+3 d^2
|\beta|^3 \alpha_1 \beta_1 +\tfrac{3}{2} d^2 |\beta|^3 \beta_1^2-72
d^5 |\beta|+9 d^5 \beta_1-153 d^4 |\beta|^2+9 d^4 |\beta| \alpha_1+93
d^4 |\beta| \beta_1-3 d^4 \beta_1^2 \\
& +\tfrac{1243}{6} d^3 |\beta|^3+72 d^3 |\beta|^2 \alpha_1+92 d^3
|\beta|^2 \beta_1+3 d^3 |\beta|^2 \beta_2-9 d^3 |\beta| \alpha_1
\beta_1-\tfrac{35}{2} d^3 |\beta| \beta_1^2+\tfrac{1}{6} d^3
\beta_1^3+\tfrac{19}{2} d^2 |\beta|^3 \alpha_1 \\
& +3 d^2 |\beta|^3 \alpha_2+\tfrac{19}{2} d^2 |\beta|^3 \beta_1 +3 d^2
|\beta|^3 \beta_2-\tfrac{9}{2} d^2 |\beta|^2 \alpha_1^2-23 d^2
|\beta|^2 \alpha_1 \beta_1-\tfrac{37}{2} d^2 |\beta|^2
\beta_1^2+\tfrac{1}{2} d^2 |\beta| \alpha_1 \beta_1^2 \\
& +\tfrac{1}{2} d^2 |\beta| \beta_1^3-4 d |\beta|^3 \alpha_1^2-8 d
|\beta|^3 \alpha_1 \beta_1-4 d |\beta|^3 \beta_1^2+\tfrac{1}{2} d
|\beta|^2 \alpha_1^2 \beta_1+d |\beta|^2 \alpha_1
\beta_1^2+\tfrac{1}{2} d |\beta|^2 \beta_1^3+\tfrac{1}{6} |\beta|^3
\alpha_1^3 \\
& +\tfrac{1}{2} |\beta|^3 \alpha_1^2 \beta_1+\tfrac{1}{2} |\beta|^3
\alpha_1 \beta_1^2+\tfrac{1}{6} |\beta|^3 \beta_1^3+102 d^4 |\beta|-54
d^4 \beta_1-\tfrac{1243}{2} d^3 |\beta|^2-48 d^3 |\beta|
\alpha_1-\tfrac{199}{2} d^3 |\beta| \beta_1 \\
& -9 d^3 |\beta| \beta_2+6 d^3 \alpha_1 \beta_1+\tfrac{45}{2} d^3
\beta_1^2-458 d^2 |\beta|^3-\tfrac{57}{2} d^2 |\beta|^2 \alpha_1-9 d^2
|\beta|^2 \alpha_2+116 d^2 |\beta|^2 \beta_1-23 d^2 |\beta|^2 \beta_2
\\
& +3 d^2 |\beta| \alpha_1^2+\tfrac{95}{2} d^2 |\beta| \alpha_1
\beta_1+\tfrac{105}{2} d^2 |\beta| \beta_1^2+d^2 |\beta| \beta_1
\beta_2-d^2 \alpha_1 \beta_1^2-2 d^2 \beta_1^3+\tfrac{155}{2} d
|\beta|^3 \alpha_1-8 d |\beta|^3 \alpha_2 \\
& +\tfrac{155}{2} d |\beta|^3 \beta_1-8 d |\beta|^3 \beta_2+12 d
|\beta|^2 \alpha_1^2+\tfrac{61}{2} d |\beta|^2 \alpha_1 \beta_1+d
|\beta|^2 \alpha_1 \beta_2+d |\beta|^2 \alpha_2 \beta_1+\tfrac{37}{2}
d |\beta|^2 \beta_1^2+2 d |\beta|^2 \beta_1 \beta_2 + \\
 \end{split}
 \end{equation*}
 \begin{equation*}
 \tiny
 \begin{split}
& -\tfrac{3}{2} d |\beta| \alpha_1^2 \beta_1-\tfrac{11}{2} d |\beta|
\alpha_1 \beta_1^2-4 d |\beta| \beta_1^3-\tfrac{5}{2} |\beta|^3
\alpha_1^2+|\beta|^3 \alpha_1 \alpha_2-5 |\beta|^3 \alpha_1
\beta_1+|\beta|^3 \alpha_1 \beta_2+|\beta|^3 \alpha_2
\beta_1-\tfrac{5}{2} |\beta|^3 \beta_1^2 \\
& +|\beta|^3 \beta_1 \beta_2-\tfrac{1}{2} |\beta|^2 \alpha_1^3-3
|\beta|^2 \alpha_1^2 \beta_1-\tfrac{9}{2} |\beta|^2 \alpha_1
\beta_1^2-2 |\beta|^2 \beta_1^3+\tfrac{1243}{3} d^3
|\beta|+\tfrac{70}{3} d^3 \beta_1+6 d^3 \beta_2+1374 d^2 |\beta|^2 \\
& +19 d^2 |\beta| \alpha_1+6 d^2 |\beta| \alpha_2-\tfrac{845}{2} d^2
|\beta| \beta_1+48 d^2 |\beta| \beta_2-27 d^2 \alpha_1 \beta_1-40 d^2
\beta_1^2-2 d^2 \beta_1 \beta_2-\tfrac{842}{3} d |\beta|^3 \\
& -\tfrac{465}{2} d |\beta|^2 \alpha_1+24 d |\beta|^2 \alpha_2-396 d
|\beta|^2 \beta_1+29 d |\beta|^2 \beta_2+d |\beta|^2 \beta_3-8 d
|\beta| \alpha_1^2-33 d |\beta| \alpha_1 \beta_1-3 d |\beta| \alpha_1
\beta_2 \\
& -3 d |\beta| \alpha_2 \beta_1+2 d |\beta| \beta_1^2-11 d |\beta|
\beta_1 \beta_2+d \alpha_1^2 \beta_1+7 d \alpha_1
\beta_1^2+\tfrac{47}{6} d \beta_1^3-\tfrac{92}{3} |\beta|^3 \alpha_1-6
|\beta|^3 \alpha_2+|\beta|^3 \alpha_3 \\
& -\tfrac{92}{3} |\beta|^3 \beta_1-6 |\beta|^3 \beta_2+|\beta|^3
\beta_3+\tfrac{15}{2} |\beta|^2 \alpha_1^2-3 |\beta|^2 \alpha_1
\alpha_2+\tfrac{87}{2} |\beta|^2 \alpha_1 \beta_1-6 |\beta|^2 \alpha_1
\beta_2-6 |\beta|^2 \alpha_2 \beta_1+36 |\beta|^2 \beta_1^2 \\
& -9 |\beta|^2 \beta_1 \beta_2+\tfrac{1}{3} |\beta|
\alpha_1^3+\tfrac{11}{2} |\beta| \alpha_1^2 \beta_1+13 |\beta|
\alpha_1 \beta_1^2+\tfrac{47}{6} |\beta| \beta_1^3-916 d^2 |\beta|+303
d^2 \beta_1-28 d^2 \beta_2+842 d |\beta|^2 \\
& +155 d |\beta| \alpha_1-16 d |\beta| \alpha_2+\tfrac{1237}{2} d
|\beta| \beta_1-31 d |\beta| \beta_2-3 d |\beta| \beta_3+8 d \alpha_1
\beta_1+2 d \alpha_1 \beta_2+2 d \alpha_2 \beta_1-\tfrac{103}{2} d
\beta_1^2 \\
& +14 d \beta_1 \beta_2+706 |\beta|^3+92 |\beta|^2 \alpha_1+18
|\beta|^2 \alpha_2-3 |\beta|^2 \alpha_3-46 |\beta|^2 \beta_1+48
|\beta|^2 \beta_2-6 |\beta|^2 \beta_3-5 |\beta| \alpha_1^2 \\
& +2 |\beta| \alpha_1 \alpha_2-\tfrac{197}{2} |\beta| \alpha_1
\beta_1+11 |\beta| \alpha_1 \beta_2+11 |\beta| \alpha_2
\beta_1-\tfrac{271}{2} |\beta| \beta_1^2+26 |\beta| \beta_1 \beta_2-3
\alpha_1^2 \beta_1-12 \alpha_1 \beta_1^2-10 \beta_1^3 \\
& -\tfrac{1684}{3} d |\beta|-\tfrac{808}{3} d \beta_1+10 d \beta_2+2 d
\beta_3-2118 |\beta|^2-\tfrac{184}{3} |\beta| \alpha_1-12 |\beta|
\alpha_2+2 |\beta| \alpha_3+\tfrac{1184}{3} |\beta| \beta_1-102
|\beta| \beta_2 \\
& +11 |\beta| \beta_3+63 \alpha_1 \beta_1-6 \alpha_1 \beta_2-6 \alpha_2 \beta_1+150 \beta_1^2-24 \beta_1 \beta_2+1412 |\beta|-362 \beta_1+60 \beta_2-6 \beta_3.\\
\end{split}
\end{equation*}

\bibliographystyle{amsplain}
\bibliography{References_Florian}

  \end{document}